\documentclass[12pt]{article}
\usepackage{amsmath,mathdots}
\usepackage{amssymb}
\usepackage{latexsym}
\usepackage{amsthm}
\usepackage{amsmath,amssymb,amsfonts,amsthm}
\usepackage{mdframed}
\usepackage{graphicx}
\usepackage{stmaryrd}
\usepackage{gensymb}
\usepackage{bm}
\usepackage{tikz}  
\usepackage[symbol]{footmisc}
\usepackage{stmaryrd}
\usepackage{multirow}
\usepackage{tikz}
\usepackage{longtable}
\usepackage{caption}
\usepackage{subcaption}
\usepackage{adjustbox}
\usetikzlibrary{arrows,patterns,positioning}

\tikzset{
  dot/.style={circle,fill=black,inner sep=2.2pt},
  arr/.style={-{Stealth[length=3mm]},thick},   % 新版
  lab/.style={font=\small}
}
\usepackage{graphicx}
\usepackage{tikz}
\usetikzlibrary{matrix,arrows}
\usetikzlibrary{positioning}
\usetikzlibrary{fit}
\usetikzlibrary{patterns}
\usepackage{tikz}
\usetikzlibrary{calc,patterns,positioning}
\usepackage[colorlinks,
linkcolor=blue,
anchorcolor=blue,
citecolor=blue
]{hyperref}
\usepackage[margin=2.9cm]{geometry}
\DeclareMathOperator{\asc}{asc}
\DeclareMathOperator{\des}{des}
\DeclareMathOperator{\rmax}{rmax}
\DeclareMathOperator{\lmax}{lmax}
\DeclareMathOperator{\rmin}{rmin}
\DeclareMathOperator{\lmin}{lmin}

\usepackage[normalem]{ulem}

\newtheorem{thm}{Theorem}
\newtheorem{lemma}{Lemma}

\usepackage{graphicx}
\usepackage{tikz}
\usetikzlibrary{matrix,arrows}
\usetikzlibrary{positioning}
\usetikzlibrary{fit}
\usetikzlibrary{patterns}

\newcommand{\pattern}[4]{										% mesh pattern
	\raisebox{0.6ex}{
		\begin{tikzpicture}[scale=0.35, baseline=(current bounding box.center), #1]
			\foreach \x/\y in {#4}		\fill[gray!50] (\x,\y) rectangle +(1,1);
			\draw (0.01,0.01) grid (#2+0.99,#2+0.99);
			\foreach \x/\y in {#3}		\filldraw (\x,\y) circle (6pt);
	\end{tikzpicture}}
}

\newcommand{\dbrac}[1]{{\llbracket #1 \rrbracket}} 	% double brackets
\newcommand{\boks}[2]{({#1, #2})}   % A box with the old
\definecolor{red}{rgb}{1,0,0}
\DeclareMathOperator{\red}{red}

\allowdisplaybreaks

\begin{document}
	
\begin{center}
	{\large \bf Distributions of Mesh Patterns of Short Lengths on Separable Permutations}
\end{center}
	
\begin{center}
Alice L.L. Gao$^{1}$, Sergey Kitaev$^{2}$, Ya-Xing Li$^{3}$, and Yun Li$^{4}$\\[6pt]
\end{center}
\begin{small}
$^{1,3,4}$School of Mathematics and Statistics, Northwestern Polytechnical University, Xi'an, Shaanxi 710072, P.R. China

$^{1,3,4}$Shenzhen Research Institute of Northwestern Polytechnical University, Sanhang Science $\&$ Technology Building, No.\ 45th, Gaoxin South 9th Road, Nanshan District,   Shenzhen City,  518057,  P.R. China

$^{2}$Department of Mathematics and Statistics, University of Strathclyde, 26 Richmond Street, Glasgow G1 1XH, United Kingdom 

\noindent
Emails: $^{1}${\tt llgao@nwpu.edu.cn},
$^{2}${\tt sergey.kitaev@strath.ac.uk},
$^{3}${\tt liya@mail.nwpu.edu.cn}, \\
$^{4}${\tt liyun091402@163.com}.
\end{small}
%\end{center}
	
\noindent\textbf{Abstract.}
This paper contributes to the long line of research on the distribution of mesh patterns in permutations. We extend these studies to separable permutations and carry out a comprehensive analysis of mesh patterns of length at most~2. For mesh patterns of length~1, we determine the distributions for all six equivalence classes. In addition, we obtain the joint distribution for the patterns in the class containing the well-known permutation statistic known as the strict fixed point.

Furthermore, computer experiments suggest that at most 124 pairs of mesh patterns of type 12 and 21 with identical shading are equidistributed. Using symmetry operations, we partition these pairs into 38 equivalence classes with respect to distribution. By explicitly determining 24 of these distributions (in many cases, in fact, joint equidistributions of the respective pairs), we reduce the number of equivalence classes to~31, which turns out to be the true number of equivalence classes. Enumerating 14 of these classes is left as an open problem.

\noindent {\bf Keywords:} 
pattern avoidance;
mesh pattern; equidistribution; separable permutation; reducible permutation; distribution; strict fixed point\\
	
\noindent {\bf AMS Subject Classifications:} 05A05, 05A15\\
	
%\noindent 
%\newpage	
%\tableofcontents
%\newpage	
	
\section{Introduction}\label{intro}

A permutation of length $n$ is a rearrangement of the set $[n]:=\{1,2,\ldots,n\}$. Denote by  $S_n$  the set of permutations of $[n]$ and let $\varepsilon$ be the empty permutation. A {\em pattern} is a permutation. A permutation $\pi_1\pi_2\cdots\pi_n\in S_n$ avoids a pattern $p=p_1p_2\cdots p_k\in S_k$  if there is no subsequence $\pi_{i_1}\pi_{i_2}\cdots\pi_{i_k}$ such that $\pi_{i_j}<\pi_{i_m}$ if and only if $p_j<p_m$.  Let $S_n(p)$ denote the set of $p$-avoiding permutations of length $n$. For a permutation $\pi=\pi_1\pi_2\cdots\pi_n$, its {\em reverse} is the permutation $\pi^r=\pi_n\pi_{n-1}\cdots \pi_1$ and its {\em complement} is the permutation $\pi^c=c(\pi_1)c(\pi_2)\cdots c(\pi_n)$ where $c(x)=n+1-x$. Also, the length of a permutation $\pi$, denoted by $|\pi|$, is the number of elements in $\pi$. For example, for $\pi=423156$, $|\pi|=6$, $\pi^r=651324$, and $\pi^c=354621$. Finally, for a sequence $\sigma$ of distinct numbers, the {\em reduced form} of $\sigma$, denoted $\red(\sigma)$, is obtained by replacing the $i$-th smallest number in $\sigma$ by $i$. For example, $\red(4295)=2143$. 

The concept of a {\em mesh pattern} was introduced by Br\"{a}nd\'{e}n and Claesson \cite{BrCl}. The study of mesh patterns  has become a prominent area of research within the theory of patterns in permutations, which has attracted significant attention (see~\cite{Kit}). For enumerative results related to mesh patterns, see, for example,~\cite{AKV,  Hilmarsson2015Wilf, KL, KZ, KZZ, KLv, T2}.

A mesh pattern of length $k$ is a pair $(\tau,R)$, where $\tau \in S_k$ and~$R$ is a subset of $\dbrac{0,k} \times \dbrac{0,k}$, where
$\dbrac{0,k}$ denotes the interval of integers from $0$ to $k$.
Let $\boks{i}{j}$ be the box whose corners have coordinates $(i,j), (i,j+1),
(i+1,j+1)$, and $(i+1,j)$, and let the horizontal lines (resp.,   vertical lines) represent the values (resp.,   positions) in the pattern. Mesh patterns can be drawn by shading the boxes in $R$. A mesh pattern with $\tau=123$ and $R = \{\boks{0}{0},\boks{1}{2},\boks{2}{1},\boks{3}{0}\}$ is drawn as follows
\[
\pattern{scale=0.8}{3}{1/1,2/2,3/3}{0/0,1/2, 2/1,3/0}.
\] 
%A mesh pattern has a {\em symmetric shading}
%if the shading of the box $(i,j)$  implies the shading of  the box $(j,i)$, and 
%{\em antipodal shading} if, for any $i\neq j$, precisely  one of the boxes $(i,j)$ and $(j,i)$ is shaded. See Table~\ref{tab-sy-shadings} (resp., Table~\ref{tab-anti-shadings}) for examples of symmetric (resp., antipodal) shadings of mesh patterns. 

Given a permutation $\pi = \pi_1 \pi_2 \cdots \pi_n \in S_n$, a subsequence $\pi' = \pi_{i_1} \pi_{i_2} \cdots \pi_{i_k}$ is an \emph{occurrence} of a mesh pattern $p = (\tau, R)$ in $\pi$ if $(i)$ $\pi'$ is order-isomorphic to $\tau$, and $(ii)$ the shaded regions defined by $R$ contain no elements of $\pi$ that are not in $\pi'$.
Otherwise, $\pi$ is said to avoid $p$, denoted by $\pi \in S_n(p)$. 
For example, consider the mesh pattern $p = \big(123, \{(0,0),(1,2),(2,1),(3,1)\}\big)$.
The permutation $23154$ contains two occurrences of $p$, whereas the permutation $14325$ avoids $p$. We let $p(\pi)$ denote the number of occurrences of (mesh) pattern $p$ in $\pi$.

Two patterns $q_1$ and $q_2$ are {\em Wilf-equivalent}, denoted $q_1\sim q_2$, if  $|S_n(q_1)|=|S_n(q_2)|$. Moreover, $q_1$ and $q_2$ are called {\em equidistributed}, denoted~$q_1\sim_d q_2$, 
if the number of permutations of length $n$ with $k$ occurrences of $q_1$ is equal to that with $k$ occurrences of $q_2$ for any nonnegative integers $n$ and $k$. 
The stricter notion of \textit{joint equidistribution} of two patterns, denoted $q_1\sim_{jd}q_2$, means that the number of $n$-permutations with $k$ occurrences of $q_1$ and $\ell$ occurrences of $q_2$ is equal to that with $\ell$  occurrences of $q_1$ and $k$ occurrences of $q_2$. 

For $n \ge 2$, a permutation $\pi_1 \pi_2 \cdots \pi_n$ is \textit{irreducible} if there is no $i$, 
$2 \le i \le n$, such that any element in $\pi_1 \cdots \pi_{i-1}$ is less than each element in 
$\pi_i \cdots \pi_n$. For example, the permutation $42513$ is irreducible. By definition, the permutation $1$ is irreducible. A permutation is \textit{reducible} if it is not irreducible. The empty permutation $\varepsilon$ is neither reducible nor irreducible. Each reducible permutation can be cut into irreducible pieces called 
\textit{irreducible components}, or just \textit{components}. For example, the components of the permutation $312546978$ are $312$, $54$, $6$, and $978$.

Suppose $\pi=\pi_1\pi_2\cdots\pi_m \in S_m$ and $\sigma=\sigma_1\sigma_2\cdots\sigma_n\in S_n$. We define the {\em direct sum}\index{direct sum $\oplus$} (or simply, {\em sum}) $\oplus$, and the {\em skew sum}\index{skew sum $\ominus$} $\ominus$ by building the permutations $\pi\oplus\sigma$ and $\pi\ominus\sigma$ as follows:
\begin{eqnarray*}
(\pi\oplus\sigma)_i&=& \left\{
\begin{array}{ll}
\pi_i  &  \mbox{ if }1\leq i\leq m,\\
\sigma_{i-m}+m & \mbox{ if }m+1\leq i\leq m+n,
\end{array}\right.\nonumber \\
(\pi\ominus\sigma)_i &=& \left\{
\begin{array}{ll}
\pi_i+n  &  \mbox{ if }1\leq i\leq m,\\
\sigma_{i-m} & \mbox{ if }m+1\leq i\leq m+n.
\end{array}\right.\nonumber
\end{eqnarray*}
For example, $14325\oplus 4231=143259786$ and $14325\ominus 4231= 587694231$. 

The {\em separable permutations} are those which can be built from the permutation $1$ by repeatedly applying the $\oplus$ and $\ominus$ operations. Bose, Buss and Lubiw~\cite{BBL98} introduced the notion of separable permutation in 1998, and it is well-known \cite{Kit} that the set of all separable permutations of length $n\geq 1$ is precisely $S_n(2413,3142)$ (we assume here that by definition, the empty permutation $\varepsilon$ is not separable).  Separable permutations appear in the literature in various contexts (see \cite{CKZ2024} for references). 

Clearly, the reverse, or complement, or (usual group theoretic) inverse, or any composition of these operations applied to a separable permutation gives a separable permutation. It is not difficult to see that any $\pi\in S_n(2413,3142)$ has the following structure (see also Figure~\ref{sepStructure} for a schematic representation, where a permutation is viewed as a diagram with a dot in position $(i,\pi_i)$ for each element $\pi_i$ of $\pi$):
\begin{align}
	\pi=L_1L_2\cdots
	L_mnR_mR_{m-1}\cdots R_1\notag
\end{align}
where
\begin{itemize}
	\item[(1)] for $1\leq i\leq m$, $L_i$ and $R_i$ are non-empty ($\neq \varepsilon$), with possible exception for
	$L_m$ and $R_1$, separable permutations which are intervals in $\pi$ (that is, consist of all elements in $\{a,a+1,\ldots,b\}$ for some $a$ and $b$);
	\item[(2)]  $R_1<L_1<R_2<L_2<\cdots <R_m<L_m$, where $A<B$, for two
	permutations $A$ and $B$, means that each element of $A$ is less than
	every element of $B$. In particular, $R_1$, if it is not empty,
	contains 1.
\end{itemize}

\noindent
For example, if $\pi=2165743$ then $L_1=21$, $L_2=65$, $R_1=\varepsilon$ and $R_2=43$. Note that a separable permutation is irreducible if and only if $R_1\neq \epsilon$.

\begin{figure}[htbp]
	\begin{center}
		\begin{tikzpicture}[line width=0.5pt,scale=0.24]
			\coordinate (O) at (0,0);
			
			\path (25,1)  node {$n$};
			\draw [dashed] (O)--++(50,0);
			\fill[black!100] (O)++(25,0) circle(1.5ex);
			
			\draw (20,-1) rectangle (24,-3);
			\path (22,-2)  node {$L_m$};
			\path (13.5,-2)  node {possibly empty$\boldsymbol{\rightarrow}$};
			
			\draw [dashed] (0,-4)--++(50,0);
			\draw [dashed] (0,-8)--++(50,0);
			\draw [dashed] (0,-17)--++(50,0);
			\draw [dashed] (0,-21)--++(50,0);
			\draw [dashed] (0,-25)--++(50,0);
			
			\draw [dashed] (25,0)--++(0,-29);
			\draw [dashed] (31,0)--++(0,-29);
			\draw [dashed] (37,0)--++(0,-29);
			\draw [dashed] (43,0)--++(0,-29);
			
			\draw [dashed] (19,0)--++(0,-29);
			\draw [dashed] (13,0)--++(0,-29);
			\draw [dashed] (7,0)--++(0,-29);
			
			\draw (26,-5) rectangle (30,-7);
			\path (28,-6)  node {$R_m$};
			
			\fill[black!100] (O)++(17,-9) circle(1.0ex);
			\fill[black!100] (O)++(16,-10) circle(1.0ex);
			\fill[black!100] (O)++(15,-11) circle(1.0ex);
			\draw (8,-14) rectangle (12,-16);
			\path (10,-15)  node {$L_2$};
			
			\draw (2,-22) rectangle (6,-24);
			\path (4,-23)  node {$L_1$};

			\fill[black!100] (O)++(32,-11) circle(1.0ex);
			\fill[black!100] (O)++(33,-12) circle(1.0ex);
			\fill[black!100] (O)++(34,-13) circle(1.0ex);
			\draw (38,-18) rectangle (42,-20);
			\path (40,-19)  node {$R_2$};
			
			\draw (45,-26) rectangle (49,-28);
			\path (47,-27)  node {$R_1$};
			\path (38.5,-27)  node {possibly empty$\boldsymbol{\rightarrow}$};
			
		\end{tikzpicture}
		\caption{A schematic representation of the permutation diagrams corresponding to separable permutations. Each $L_i$ and $R_j$ is a separable permutation.}\label{sepStructure}
	\end{center}
\end{figure}
Throughout this paper, we use the abbreviation ``g.f.'' to denote a \textit{generating function}. We denote by $F(x)$ the generating function (g.f.) for separable permutations:
\begin{equation}\label{eq:separable-gf}
	F(x) := \sum_{n \ge 1} |S_n(3142, 2413)| x^n 
	= \frac{1 - x - \sqrt{1 - 6x + x^2}}{2}.
\end{equation}
The notation \(F(x,t)\), and likewise \(F(x,t_1,t_2)\), is local to
each theorem and its proof: the marked pattern or patterns are specified
there. Throughout the paper, \(F(x)\) always denotes the series in
\eqref{eq:separable-gf}.

\subsection{Symmetry operations}
Clearly, applying the reverse, complement, or (usual group-theoretic) inverse operation, or any composition of these operations, to a separable permutation yields a separable permutation. For a given mesh pattern $(\tau, R)$ of length~$n$, we define
$$
(\tau, R)^{r} = (\tau^{r}, R^{r}), \quad
(\tau, R)^{c} = (\tau^{c}, R^{c}), \quad
(\tau, R)^{i} = (\tau^{i}, R^{i}),
$$
where $\tau^{i}$ is the usual inverse, and $R^{r} = \{(n - x, y) : (x, y) \in R\}$, $R^{c} = \{(x, n - y) : (x, y) \in R\}$, and $R^{i} = \{(y, x) : (x, y) \in R\}$.

Hence, \emph{reverse} is a reflection around the vertical center line, 
\emph{complement} is a reflection around the horizontal center line, 
and \emph{inverse} is the reflection around the southwest-to-northeast diagonal. 
Figure~\ref{fig:symmetry-example} illustrates applications of these operations 
on the pattern $p = (312, \{(0,1), (1,3), (2,2)\})$.

\begin{figure}[htbp]
	\centering
	\begin{tikzpicture}[
		arr/.style={->,thick},
		lab/.style={font=\small},
		symax/.style={dash pattern=on 7pt off 3pt, line width=0.9pt, draw=black!70}
		]
		
		\def\GAP{3.6}
		
		\node (P) at (0,0)
		{\pattern{execute at end picture={\draw[symax] (2,-0.7) -- (2,4.7);}}
			{3}{1/3,2/1,3/2}{0/1,1/3,2/2}};
		\node at ($(P.center)+(0.05,-1.15)$) {$p$};
		
		\draw[arr] ($(P.east)+(0.3,0)$) -- ++(1.0,0);
		\node[lab] at ($(P.east)+(0.8,-0.35)$) {$r$};
		
		\node (PR) at ($(P.center)+(\GAP,0)$)
		{\pattern{execute at end picture={\draw[symax] (-0.5,-0.5) -- (4.5,4.5);}}
			{3}{1/2,2/1,3/3}{1/2,2/3,3/1}};
		\node at ($(PR.center)+(0.05,-1.15)$) {$p^{r}$};
		
		\draw[arr] ($(PR.east)+(0.3,0)$) -- ++(1.0,0);
		\node[lab] at ($(PR.east)+(0.8,-0.35)$) {$i$};
		
		\node (PRI) at ($(PR.center)+(\GAP,0)$)
		{\pattern{execute at end picture={\draw[symax] (-0.5,2) -- (4.5,2);}}
			{3}{1/2,2/1,3/3}{1/3,2/1,3/2}};
		\node at ($(PRI.center)+(0.05,-1.15)$) {$p^{ri}$};
		
		\draw[arr] ($(PRI.east)+(0.3,0)$) -- ++(1.0,0);
		\node[lab] at ($(PRI.east)+(0.8,-0.35)$) {$c$};
		
		\node (PRIC) at ($(PRI.center)+(\GAP,0)$)
		{\pattern{}{3}{1/2,2/3,3/1}{1/0,2/2,3/1}};
		\node at ($(PRIC.center)+(0.05,-1.15)$) {$p^{ric}$};
	\end{tikzpicture}
	\caption{Several symmetries of a mesh pattern.}
	\label{fig:symmetry-example}
\end{figure}
\subsection{Distributions of statistics on separable permutations}
For a permutation $\pi = \pi_1\pi_2\cdots \pi_n$, the \emph{descent} (resp.,\ \emph{ascent}) statistic on $\pi$, $\des(\pi)$ (resp.,\ $\asc(\pi)$), is defined as the number of $i \in \{1,2,\ldots,n-1\}$ such that $\pi_i > \pi_{i+1}$ (resp.,\ $\pi_i < \pi_{i+1}$). For example, $\des(561423)=2$ and $\asc(25134)=3$.

The following lemma will be used in Theorem~\ref{equ:Nr.13} below. It gives the distribution of descents (equivalently, by using reverse, ascents) on separable permutations.

\begin{lemma}[{\cite[Cor.~2.4]{fu2018two}}]\label{Sxt}
  The g.f.
  $$Q(x,t):=\sum_{n\ge1} x^n \sum_{\pi\in S_n(2413,3142)}
  t^{\asc(\pi)}=\sum_{n\ge1} x^n \sum_{\pi\in S_n(2413,3142)}
  t^{\des(\pi)}$$ satisfies the functional equation
  \begin{equation}\label{eq:Stq}
  	tQ^3(x,t) + txQ^2(x,t) + ((1+t)x - 1)Q(x,t) + x = 0. 
  \end{equation}
\end{lemma}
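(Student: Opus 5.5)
The plan is to decompose separable permutations by their outermost operation and to track how ascents behave under $\oplus$ and $\ominus$.

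The equality of the two sums in the statement is immediate. Reversal is a length-preserving bijection on $S_n(2413,3142)$ (the pattern set is closed under reverse, since $2413^r=3142$), and it swaps ascents with descents. So it suffices to derive the functional equation for ascents.

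Every separable permutation of length at least $2$ falls into exactly one of two classes.
\begin{itemize}
\item It is $\oplus$-decomposable: it can be written uniquely as $\alpha_1\oplus\cdots\oplus\alpha_k$ with $k\ge2$ and each $\alpha_i$ $\oplus$-indecomposable.
\item Otherwise it is $\ominus$-decomposable: it can be written uniquely as $\beta_1\ominus\cdots\ominus\beta_k$ with $k\ge2$ and each $\beta_i$ $\ominus$-indecomposable.
\end{itemize}
The summands are again separable. This is the standard separating-tree uniqueness, and it also follows from the block structure $L_1\cdots L_m n R_m\cdots R_1$ described above.

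The key observation concerns the position where two summands meet. We have $\asc(\pi\oplus\sigma)=\asc(\pi)+\asc(\sigma)+1$, since that position is an ascent. We have $\asc(\pi\ominus\sigma)=\asc(\pi)+\asc(\sigma)$, since it is a descent.

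Let $P$ and $M$ be the g.f.'s (with $t$ marking ascents) of the $\oplus$-decomposable and $\ominus$-decomposable separable permutations, respectively. Then $Q=x+P+M$. The $\oplus$-indecomposable separable permutations have g.f. $I=x+M$, and the $\ominus$-indecomposable ones have g.f. $J=x+P$. The decomposition gives
\[
P=\sum_{k\ge2}t^{k-1}I^k=\frac{tI^2}{1-tI},\qquad M=\sum_{k\ge2}J^k=\frac{J^2}{1-J}.
\]

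To eliminate $P$ and $M$, rewrite the first identity as $P=tI(I+P)$. Since $I+P=Q$, this gives $P=t(Q-P)Q$, hence $P=tQ^2/(1+tQ)$. In the same way $M=J(J+M)=(Q-M)Q$, hence $M=Q^2/(1+Q)$. Substituting into $Q=x+P+M$ gives
\[
Q=x+\frac{tQ^2}{1+tQ}+\frac{Q^2}{1+Q}.
\]
Multiplying through by $(1+tQ)(1+Q)$ and collecting terms, the $(1+t)Q^2$ terms cancel and one $tQ^3$ survives. What remains is exactly $tQ^3+txQ^2+((1+t)x-1)Q+x=0$, which is \eqref{eq:Stq}.

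The only step needing care is the uniqueness of the $\oplus$/$\ominus$ decomposition into indecomposable summands. This is what makes the sequence constructions for $P$ and $M$ valid without overcounting. I would justify it directly: the maximal $\oplus$-decomposition of any permutation is unique, because its cut points are exactly the positions $i$ with $\{\pi_1,\ldots,\pi_{i}\}=[i]$. A separable permutation of length at least $2$ that has no such cut must be $\ominus$-decomposable, because this is how separable permutations are built. The rest is routine algebra.
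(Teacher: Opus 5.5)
Your proof is correct. Clearing denominators in $Q=x+\frac{tQ^2}{1+tQ}+\frac{Q^2}{1+Q}$ gives exactly $tQ^3+txQ^2+((1+t)x-1)Q+x=0$, and the coefficients match the small cases ($x^2(1+t)$ and $x^3(1+4t+t^2)$).

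The paper gives no proof of this lemma. It imports the result from \cite{fu2018two}, so your derivation is a self-contained replacement rather than a variant of an in-paper argument.

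Your split is exactly the reducible/irreducible decomposition the paper uses in Section~\ref{re-irre-decom} (Figure~\ref{fig:perm-half-two}). Your $P$ is the paper's $R$, and your $I=x+M$ is its irreducible generating function. The only extra input is that each $\oplus$-junction adds one ascent while each $\ominus$-junction adds none. So the lemma could have been proved with the paper's own tools instead of being cited.

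Your elimination step also yields the closed forms $P=tQ^2/(1+tQ)$ and $M=Q^2/(1+Q)$. At $t=1$ these give $P=M$, which is the ``half reducible, half irreducible'' fact the paper uses repeatedly. At $t=1$ the cubic also factors as $(1+Q)\bigl(Q^2+(x-1)Q+x\bigr)$, which recovers \eqref{eq:separable-gf}. The citation buys brevity; your route buys independence from the citation.
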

For a permutation $\pi = \pi_1 \pi_2 \cdots \pi_n$, 
$\pi_i$ is a \emph{right-to-left maximum} (resp., \emph{right-to-left minimum}) 
if $\pi_i > \pi_j$ (resp., $\pi_i < \pi_j$) for all $j > i$. In particular, $\pi_n$ is a right-to-left maximum and a right-to-left minimum. 
The number of right-to-left maxima (resp., right-to-left minima) is denoted by $\rmax(\pi)$ (resp., $\rmin(\pi)$). 
For example, $\rmax(436512) = 3$ and $\rmin(326154) = 2$. 
Also, $\pi_i$ is a \emph{left-to-right maximum} (resp., \emph{left-to-right minimum}), denoted by $\lmax(\pi)$ (resp., $\lmin(\pi)$) if $\pi_i > \pi_j$ (resp., $\pi_i < \pi_j$) for all $j < i$. In particular, $\pi_1$ is a left-to-right maximum and a left-to-right minimum. For example, $\lmax(325164) = \lmin(326154) = 3$.

In this paper, we will frequently make use of the following theorem from~\cite{CKZ2024}, which describes the distributions of four permutation statistics on separable permutations, as well as on their reducible and irreducible subclasses.

\begin{thm}[{\cite[Thm.~5]{CKZ2024}}]\label{four-stat}
	The g.f.\ $G(x,t) := \sum_{\pi} x^{|\pi|} t^{\operatorname{stat}(\pi)}$ 
	giving the distribution of any statistic 
	$\operatorname{stat} \in \{\lmax, \rmax, \lmin, \rmin\}$ 
	on separable permutations is
	\begin{small}
	\begin{equation}\label{eq:four-stat}
\frac{4\sqrt{\left(\frac{x}{4}+\frac{5}{4}-\frac{1}{4} \sqrt{x^2-6 x+1}-tx\right)^2+\sqrt{x^2-6 x+1}-x-1}-\sqrt{x^2-6 x+1}+4t x+x - 3}{2 \left(\sqrt{x^2-6 x+1}-x-1\right)}.
	\end{equation}
	\end{small}
	The distributions of $\operatorname{stat}$ on irreducible and reducible separable permutations 
	are given in Table~\ref{tab-dist-rmax-thm}.
\end{thm}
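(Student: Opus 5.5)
By the symmetry operations, it suffices to treat one statistic, say $\operatorname{rmax}$. Reverse, complement and inverse preserve separable permutations, as well as their reducibility or irreducibility up to the swap induced by the symmetry. They carry $\lmax,\rmax,\lmin,\rmin$ into one another; for example, $\rmax(\pi)=\lmax(\pi^r)$ and $\rmax(\pi)=\rmin(\pi^c)$. So the four statistics share one distribution $G(x,t)$, and I will compute it for $\rmax$ using the decomposition $\pi=L_1\cdots L_m\,n\,R_m\cdots R_1$ of Figure~\ref{sepStructure}.

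\textbf{Irreducible/reducible split.} Write $F=F(x)$ from \eqref{eq:separable-gf}. Let $A(x,t)$ and $B(x,t)$ be the g.f.'s for $\rmax$ over skew-decomposable and sum-decomposable separable permutations, respectively, with $x$ (the permutation $1$) counted separately. Every separable permutation of length at least 2 is exactly one of $\sigma\oplus\tau$ or $\sigma\ominus\tau$. Consider the two cases.
\begin{itemize}
\item For $\pi=\sigma\oplus\tau$, the right-to-left maxima all lie in $\tau$, so $\rmax(\pi)=\rmax(\tau)$. Taking $\sigma$ arbitrary and $\tau$ the last sum-indecomposable block gives $B = F\cdot(x t + A)$.
\item For $\pi=\sigma\ominus\tau$, we have $\rmax(\pi)=\rmax(\sigma)+\rmax(\tau)$. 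Taking $\sigma$ skew-indecomposable and $\tau$ arbitrary gives $A=(xt+B)\,G$.
\end{itemize}
Together with $G=xt+A+B$, this yields a linear system in $A,B,G$, once $F$ is treated as known.

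\textbf{Solving the system.} Substituting $B=F(xt+A)$ into $A=(xt+B)G$ and into $G=xt+A+B$ gives $G=(1+F)(xt+A)$. Hence $A=G/(1+F)-xt$. Plugging this into $A=(xt+F(xt+A))G=(xt+FG/(1+F))G$ produces a quadratic in $G$:
\[
\frac{F}{1+F}G^2+\Big(xt-\frac{1}{1+F}\Big)G+xt=0.
\]
I will choose the root that is a power series with $G=xt+O(x^2)$. Then I will rewrite it using $\sqrt{1-6x+x^2}$, via $F=(1-x-\sqrt{x^2-6x+1})/2$ and $1+F=(3-x-\sqrt{x^2-6x+1})/2$, to match \eqref{eq:four-stat}. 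This algebraic simplification is the main obstacle, since the nested radical in \eqref{eq:four-stat} is just the discriminant $(xt-\tfrac1{1+F})^2-\tfrac{4F xt}{1+F}$ rescaled. I would rationalize $1/(1+F)$ and check the result against the first few coefficients as a sanity test.

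\textbf{Table entries.} For the irreducible and reducible classes of Table~\ref{tab-dist-rmax-thm}, note that reducible means sum-decomposable. So the reducible g.f.\ for $\rmax$ is $B=F(xt+A)=FG/(1+F)$, and the irreducible one is $G-B=G/(1+F)$. For $\lmax$, $\lmin$ and $\rmin$, the symmetries swap $\oplus$ with $\ominus$ in some cases; for example, complement maps reducible to skew-decomposable. So for each statistic I will also compute the corresponding pieces $A$ and $xt+A$ and read off the table entries.
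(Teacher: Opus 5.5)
The paper gives no proof of this statement; it quotes \cite[Thm.~5]{CKZ2024}, so there is nothing internal to compare with. Judged on its own, your derivation of \eqref{eq:four-stat} is correct. The $\oplus/\ominus$ system, the quadratic and the branch choice all check out. The final identification is also exact, so it does not need to be confirmed on coefficients. Put $s=\sqrt{1-6x+x^2}$, $u=1/(1+F)$ and $D=(xt-u)^2-4(1-u)xt$. Then $u=(3-x+s)/4$ and $1-u=(1+x-s)/4$. The power-series root is $\bigl((u-xt)-\sqrt{D}\bigr)/\bigl(2(1-u)\bigr)$; the other root behaves like $1/x$. Multiplying numerator and denominator by $-4$ gives exactly the shape of \eqref{eq:four-stat}. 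Finally, with $p=1+x-s$, both $16D$ and $16$ times the radicand in \eqref{eq:four-stat} equal $(4-4xt)^2-8p-8xtp+p^2$.

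The table part has one missing step and one slip.

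\emph{The missing step.} To reach the table's $G$-only forms you need the quadratic once more. It is equivalent to $uG(1+G)=G^2+xt(1+G)$, i.e.\ $\frac{G}{1+F}=\frac{G^2}{1+G}+xt$, and hence $\frac{FG}{1+F}=\frac{G}{1+G}-xt$. This settles $\rmax$, and $\lmin$ follows via reverse--complement, which preserves $\oplus$.

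\emph{The slip.} For $\lmax$ and $\rmin$, reverse or complement exchanges $\oplus$ and $\ominus$. So the relevant pieces are $A$ (reducible) and $xt+B$ (irreducible, including the permutation $1$), not $A$ and $xt+A$. These evaluate to $A=\frac{G^2}{1+G}$ and $xt+B=\frac{G}{1+G}$. They agree with the printed row for $\{\lmax,\rmin\}$ only in the coefficients of $x^n$ with $n\ge 2$. The table as printed places the one-element permutation in the reducible column for these two statistics, although the paper declares $1$ irreducible. For example, the irreducible permutations $1,21,231,312,321$ give $xt+x^2t+x^3(2t+t^2)$ for $\lmax$. This is $\frac{G}{1+G}$, not $\frac{G}{1+G}-xt$.

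So ``reading off the table entries'' will not literally succeed there. You should state the corrected entries $\frac{G^2}{1+G}$ (reducible) and $\frac{G}{1+G}$ (irreducible). The latter is also what the paper actually uses for $\lmax$ in the proof of Theorem~\ref{equ:Nr.57}.
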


\begin{table}[htbp]
	\centering
	\begin{tabular}{c|c|c}
		g.f. & irreducible & reducible \\
		\hline
		& & \\[-2mm]
		$\frac{G^2(x,t)}{1+G(x,t)} + xt$
		& stat $\in \{\rmax, \lmin\}$
		& stat $\in \{\lmax, \rmin\}$ \\[2mm]
		\hline
		& & \\[-2mm]
		$\frac{G(x,t)}{1+G(x,t)} - xt$
		& stat $\in \{\lmax, \rmin\}$
		& stat $\in \{\rmax, \lmin\}$
	\end{tabular} 
	\caption{Distributions of statistics in Theorem~\ref{four-stat}.}
	\label{tab-dist-rmax-thm}
\end{table}

\subsection{Distribution of mesh patterns in the literature}

Kitaev and Zhang~\cite{KZ} initiated a systematic study of the distributions of short mesh patterns (of length at most~2) in permutations, extending the earlier work of~\cite{Hilmarsson2015Wilf} on the systematic study of mesh pattern avoidance in permutations. These investigations were later extended to certain infinite families of mesh patterns in~\cite{KZZ}. The systematic study of mesh patterns of length~3 was initiated in~\cite{KLv} and subsequently continued in~\cite{LvZhang,LvZhang1}. Other contributions to the literature on distributions of mesh patterns include~\cite{FFKL,HanZeng,HanZeng1}. The distribution of mesh patterns on a particular subclass of permutations, namely \emph{king permutations}, was studied in~\cite{LiZhang}.

\subsection{Results in this paper}

In this paper, we first determine the distributions of all mesh patterns of length~1 (see Table~\ref{tab:length-1} and the references therein). In one instance, Theorem~\ref{joint-separ-length-1}, we derive the joint equidistribution of strict fixed points and reverse strict fixed points. We then classify the mesh patterns $12$ and $21$ with identical shadings into 31 equivalence classes based on their distributions; these classes are listed in Tables~\ref{tab:sym-1group}--\ref{tab:sym-4group}, with further mergers described in Table~\ref{tab:fur-mergy}. These results are obtained using symmetry operations as well as by explicitly determining distributions of mesh patterns (we derived 24 distributions). In many cases, we are also able to determine the joint distributions of the corresponding pairs. Interestingly,  although the distributions of the patterns
\(\hspace{-1mm}\pattern{scale = 0.6}{2}{1/1,2/2}{0/0,0/2,1/0,1/1,1/2,2/0,2/2}\)
and
\(\hspace{-1mm}\pattern{scale = 0.6}{2}{1/1,2/2}{0/0,0/1,1/0,1/1,1/2,2/0,2/2}\) are the same,
the joint distributions of the pairs \(\{\hspace{-1mm}\pattern{scale = 0.6}{2}{1/1,2/2}{0/0,0/2,1/0,1/1,1/2,2/0,2/2},\hspace{-1mm}\pattern{scale = 0.6}{2}{1/2,2/1}{0/0,0/2,1/0,1/1,1/2,2/0,2/2}\}\) and \(\{\hspace{-1mm}\pattern{scale = 0.6}{2}{1/1,2/2}{0/0,0/1,1/0,1/1,1/2,2/0,2/2},\hspace{-1mm}\pattern{scale = 0.6}{2}{1/2,2/1}{0/0,0/1,1/0,1/1,1/2,2/0,2/2}\}\) are different.

\section{Distribution of mesh pattern of length 1 on separable permutations}
%In this section, we investigate the distribution of mesh patterns of length $1$ on separable permutations.
There are $2^{4} = 16$ mesh patterns of length~1. Under inverse, reverse, and complement, these patterns fall into six equidistribution classes, listed in Table~\ref{tab:length-1}. For each class, we determine the corresponding distribution. In addition, we compute the joint distribution of the patterns \hspace{-1mm}$\pattern{scale = 0.6}{1}{1/1}{0/1,1/0}$ and \hspace{-1mm}$\pattern{scale = 0.6}{1}{1/1}{0/0,1/1}$. In Table~\ref{tab:length-1}, mesh patterns that are equidistributed by the reverse, complement, or inverse operations, are grouped together within the same frame. 

\begin{table}[ht]
	{
		\renewcommand{\arraystretch}{1.5}
		\setlength{\tabcolsep}{6pt}
		\begin{center}
			\begin{tabular}{|c c |c|| c c|c|| c c|c|}
				\hline
				\footnotesize{Nr.} & \footnotesize{Pattern} &
				\footnotesize{Ref.} &
				\footnotesize{Nr.} & \footnotesize{Pattern} &
				\footnotesize{Ref.} &
				\footnotesize{Nr.} & \footnotesize{Pattern} &
				\footnotesize{Ref.} \\
				\hline
				\hline
				
				\footnotesize{N1} &
				$\pattern{scale = 0.8}{1}{1/1}{1/1}$ &
				\multirow{5}{*}{\footnotesize{Thm.~\ref{N1}}}  &
				\footnotesize{N5} &
				$\pattern{scale = 0.8}{1}{1/1}{0/1,0/0}$ &
				\multirow{5}{*}{\footnotesize{Thm.~\ref{N5}}}  &
				\footnotesize{N9} &
				$\pattern{scale = 0.8}{1}{1/1}{1/0,1/1,0/0}$&
				\multirow{5}{*}{\footnotesize{Thm.~\ref{N9}}}  \\[5pt]
				
				\footnotesize{N2} &
				$\pattern{scale = 0.8}{1}{1/1}{0/1}$  &
				\footnotesize{\multirow{1.2}{1.3cm}{}} &
				\footnotesize{N6} &
				$\pattern{scale = 0.8}{1}{1/1}{1/0,1/1}$ &
				\footnotesize{\multirow{1.2}{1.3cm}{}} &
				\footnotesize{N10} &
				$\pattern{scale = 0.8}{1}{1/1}{1/0,0/1,0/0}$&
				\footnotesize{\multirow{1.2}{1.3cm}{}} \\[5pt]

				\footnotesize{N3} &
				$\pattern{scale = 0.8}{1}{1/1}{1/0}$  &
				\footnotesize{\multirow{1.2}{1.3cm}{}} &
				\footnotesize{N7} &
				$\pattern{scale = 0.8}{1}{1/1}{1/1,0/1}$ &
				\footnotesize{\multirow{1.2}{1.3cm}{}} &
				\footnotesize{N11} &
				$\pattern{scale = 0.8}{1}{1/1}{0/0,1/1,0/1}$ &
				\footnotesize{\multirow{1.2}{1.3cm}{}} \\[5pt]

				\footnotesize{N4} &
				$\pattern{scale = 0.8}{1}{1/1}{0/0}$ &
				\footnotesize{\multirow{1.2}{1.3cm}{}} &
				\footnotesize{N8} &
				$\pattern{scale = 0.8}{1}{1/1}{0/0,1/0}$ &
				\footnotesize{\multirow{1.2}{1.3cm}{}} &
				\footnotesize{N12} &
				$\pattern{scale = 0.8}{1}{1/1}{1/0,1/1,0/1}$ &
				\footnotesize{\multirow{1.2}{1.3cm}{}} \\[5pt]
				\hline
				\footnotesize{N13} &
				$\pattern{scale = 0.8}{1}{1/1}{1/0,0/1}$ &
				\multirow{2.5}{*}{\footnotesize{Thm.~\ref{joint-separ-length-1}}} &
				\footnotesize{N15} &
				$\pattern{scale = 0.8}{1}{1/1}{}$ &
				\footnotesize{\multirow{1.2}{1.3cm}{Thm.~\ref{N15}}} &
				\footnotesize{N16} &
			    $\pattern{scale = 0.8}{1}{1/1}{1/0,1/1,0/0,0/1}$&
				\footnotesize{\multirow{1.2}{1.3cm}{Thm.~\ref{N16}}} \\[5pt]
				\cline{4-9}
				\footnotesize{N14} &
				$\pattern{scale = 0.8}{1}{1/1}{0/0,1/1}$ &
				\footnotesize{\multirow{1.2}{1.3cm}{}} \\[5pt]
				\cline{1-3}
			\end{tabular}
		\end{center}
	}
	\vspace{-0.5cm}
	\caption{Symmetry-induced equidistribution classes for mesh patterns of length $1$.}
	\label{tab:length-1}
\end{table}

%Here, we demonstrate the operation for N1 to N4 as an example, while the remaining cases follow in an analogous manner.
%\begin{center}
%	\begin{tikzpicture}[
%		arr/.style={->, line width=0.6pt},
%		lab/.style={font=\small},
%		node distance=2.0cm
%		]
%		
%		\def\myscale{1.10}
%		
%		\node (P) {\pattern{scale=\myscale}{1}{1/1}{1/1}};
%		\node at ($(P.south)+(0,-0.34)$) {$p$};
%		
%		\node[right=of P] (PR) {\pattern{scale=\myscale}{1}{1/1}{0/1}};
%		\node at ($(PR.south)+(0,-0.34)$) {$p^{r}$};
%		
%		\draw[arr, shorten >=5pt, shorten <=5pt] (P.east) -- (PR.west)
%		node[midway, below=2pt] {$r$};
%		
%		\node[right=of PR] (PRI) {\pattern{scale=\myscale}{1}{1/1}{1/0}};
%		\node at ($(PRI.south)+(0,-0.34)$) {$p^{ri}$};
%		
%		\draw[arr, shorten >=5pt, shorten <=5pt] (PR.east) -- (PRI.west)
%		node[midway, below=2pt] {$i$};
%		
%		\node[right=of PRI] (PRIC) {\pattern{scale=\myscale}{1}{1/1}{0/0}};
%		\node at ($(PRIC.south)+(0,-0.34)$) {$p^{rir}$};
%		
%		\draw[arr, shorten >=5pt, shorten <=5pt] (PRI.east) -- (PRIC.west)
%		node[midway, below=2pt] {$r$};
%		
%	\end{tikzpicture}
%\end{center}

In a separable permutation, occurrences of the patterns   \hspace{-1mm}$\pattern{scale = 0.6}{1}{1/1}{0/1}$,  \hspace{-1mm}$\pattern{scale = 0.6}{1}{1/1}{0/0}$,  \hspace{-1mm}$\pattern{scale = 0.6}{1}{1/1}{1/1}$, and  \hspace{-1mm}$\pattern{scale = 0.6}{1}{1/1}{1/0}$ correspond, respectively, to left-to-right maxima, left-to-right minima, right-to-left maxima, and right-to-left minima. Consequently, we can use Theorem~\ref{four-stat} to obtain the following result, which establishes the distributions of the mesh patterns~N1--N4.

\begin{thm}\label{N1}
	For $p\in\{\hspace{-2mm}\pattern{scale = 0.6}{1}{1/1}{1/1},\pattern{scale = 0.6}{1}{1/1}{0/1}, \pattern{scale = 0.6}{1}{1/1}{1/0}, \pattern{scale = 0.6}{1}{1/1}{0/0}\}$, and  $G(x,t)$ given by~\eqref{eq:four-stat}, we have
\begin{equation*}
\sum_{n \ge 1} x^n \sum_{\pi \in S_n(2413,3142)}t^{p(\pi)} = G(x,t).
\end{equation*}
\end{thm}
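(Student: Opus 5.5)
The plan is to reduce the theorem directly to Theorem~\ref{four-stat}. This means checking that, for each of the four shadings, an occurrence of the mesh pattern in $\pi$ is exactly one of the four classical record statistics.

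Recall that an occurrence of a length-$1$ mesh pattern is a single entry $\pi_i$. Its point $(i,\pi_i)$ splits the diagram of $\pi$ into four quadrants, and box $\boks{a}{b}$ with $a,b\in\{0,1\}$ corresponds to one of them. Box $\boks{0}{0}$ is the quadrant of points at positions $j<i$ with $\pi_j<\pi_i$; the other boxes are the analogous left/right and above/below combinations.

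\begin{itemize}
\item Shading only $\boks{1}{1}$ requires that no entry to the right of $\pi_i$ exceeds it. So $\pi_i$ is a right-to-left maximum.
\item Shading only $\boks{0}{1}$ requires no larger entry to the left, so $\pi_i$ is a left-to-right maximum.
\item Shading only $\boks{1}{0}$ forbids smaller entries to the right, giving a right-to-left minimum.
\item Shading only $\boks{0}{0}$ forbids smaller entries to the left, giving a left-to-right minimum.
\end{itemize}

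Since these are pointwise equivalences, they give $p(\pi)=\rmax(\pi),\lmax(\pi),\rmin(\pi),\lmin(\pi)$ respectively, for every permutation $\pi$, not only separable ones. Boundary entries cause no trouble: for instance, $\pi_n$ has empty right quadrants and is counted by both the right-to-left statistics and the corresponding patterns.

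Summing $x^{|\pi|}t^{p(\pi)}$ over $S_n(2413,3142)$ for $n\ge1$ then gives exactly the generating function $G(x,t)$ of Theorem~\ref{four-stat} for the matching statistic. Theorem~\ref{four-stat} states that this series equals \eqref{eq:four-stat} for each of the four statistics, which proves the claim.

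One could alternatively note that the four patterns are related by $r$, $c$ and $i$, which preserve separability, and so derive their equidistribution directly. However, this is not needed once Theorem~\ref{four-stat} is invoked.

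The only point requiring care is the orientation convention: box $\boks{x}{y}$ has $x$ indexing position and $y$ indexing value, so the correct quadrant must be matched to each statistic. Even a mismatch would be harmless here, because all four statistics share the same generating function.
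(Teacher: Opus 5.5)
Your proposal is correct and is essentially the paper's argument: the paper identifies the four shadings with left-to-right maxima, left-to-right minima, right-to-left maxima and right-to-left minima, exactly as you did, and then applies Theorem~\ref{four-stat}. Your explicit quadrant check only spells out the identification that the paper states without detail.
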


The proof of the following theorem is immediate from the fact that each element in a permutation contributes an occurrence of the pattern N15. 
\begin{thm}\label{N15}
	We have
	$$
	\sum_{n \ge 1} x^n \sum_{\pi \in S_n(2413,3142)} t^{\,\pattern{scale = 0.5}{1}{1/1}{}(\pi)} = F(xt) = \frac{1 - t x - \sqrt{\,1 - 6 t x + t^{2} x^{2}}}{2}.$$
\end{thm}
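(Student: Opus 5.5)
The plan is to observe that the unshaded pattern N15 occurs at every element of a permutation, so that for every $\pi\in S_n(2413,3142)$ we have
\[
\pattern{scale = 0.5}{1}{1/1}{}(\pi)=n .
\]
Indeed, an occurrence of a length-1 mesh pattern with no shaded boxes is just a choice of a single entry $\pi_i$. Any single entry is order-isomorphic to the pattern $1$, and there is no shading condition to check. Hence each of the $n$ entries gives exactly one occurrence, and distinct entries give distinct occurrences.

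With this identity, the inner sum collapses. For each $n\ge 1$,
\[
\sum_{\pi \in S_n(2413,3142)} t^{\,\pattern{scale = 0.5}{1}{1/1}{}(\pi)} = |S_n(2413,3142)|\, t^n ,
\]
and therefore
\[
\sum_{n\ge1}x^n\sum_{\pi \in S_n(2413,3142)} t^{\,\pattern{scale = 0.5}{1}{1/1}{}(\pi)} = \sum_{n\ge1}|S_n(2413,3142)|(xt)^n = F(xt).
\]
Substituting $xt$ for $x$ in \eqref{eq:separable-gf} then gives the stated closed form
\[
\frac{1 - tx - \sqrt{1 - 6tx + t^2x^2}}{2}.
\]

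No step here is a genuine obstacle. The only point needing care is to confirm that the empty permutation is excluded from both sides. It is excluded on the left because the sum starts at $n\ge 1$, and on the right because $F$ has no constant term, consistent with the convention that $\varepsilon$ is not separable.
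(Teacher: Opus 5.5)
Your proof is correct and follows the paper's own argument: every entry of $\pi$ is an occurrence of the unshaded length-$1$ pattern, so the statistic equals $|\pi|$ and the generating function is $F(xt)$. Substituting $xt$ for $x$ in \eqref{eq:separable-gf} then gives the stated closed form.
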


\begin{thm}[Pattern N16]\label{N16}
We have
	$$
	F(x,t) := \sum_{n \ge 1} x^n \sum_{\pi \in S_n(2413,3142)} 
	t^{\,\pattern{scale = 0.5}{1}{1/1}{1/0,1/1,0/0,0/1}(\pi)}= \frac{1 + (2t-3 )\,x - \sqrt{\,1 - 6x + x^{2}}}{2}.
	$$
\end{thm}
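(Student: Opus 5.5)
Pattern N16 has all four boxes shaded. An occurrence is therefore an element $\pi_i$ such that no other element of $\pi$ lies in any quadrant around it, which means $\pi$ consists of that single element. Hence the pattern occurs exactly once in the permutation $1$ and zero times in every separable permutation of length $n\ge 2$. The plan is to verify this observation and then read off the generating function directly, with no recursive decomposition needed.

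Splitting off the case $n=1$, the desired generating function is
\[
F(x,t)=xt+\sum_{n\ge 2}|S_n(2413,3142)|\,x^n = xt + \bigl(F(x)-x\bigr),
\]
where $F(x)$ is given by \eqref{eq:separable-gf}, and we use that there is exactly one separable permutation of length $1$. Substituting gives
\[
F(x,t)=\frac{1-x-\sqrt{1-6x+x^2}}{2}+(t-1)x=\frac{1+(2t-3)x-\sqrt{1-6x+x^2}}{2},
\]
which is the claimed formula.

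There is no real obstacle. The only point requiring care is the justification that the four shaded boxes cover the whole plane minus the lines through the pattern point. Any other element $\pi_j$ with $j\neq i$ satisfies $j\neq i$ and $\pi_j\neq\pi_i$, so it lies strictly inside one of the four boxes. Thus no occurrence exists when $n\ge 2$, and the single element of the permutation $1$ is trivially an occurrence.
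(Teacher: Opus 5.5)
Your proof is correct and follows the paper's argument: both observe that the fully shaded length-1 pattern occurs only in the permutation $1$, so $F(x,t)=tx+F(x)-x$, and then substitute \eqref{eq:separable-gf}. Your justification that every other element $\pi_j$ must lie strictly inside one of the four shaded boxes makes explicit a step the paper calls ``clear.''
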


\begin{proof}
	Clearly, the only separable permutation that contains the pattern  \hspace{-1mm}$\pattern{scale = 0.6}{1}{1/1}{1/0,1/1,0/0,0/1}$ is $1$, so  
	\begin{equation}\label{N16-expr-A}
		F(x,t) = tx + F(x) - x .
	\end{equation}
	By substituting~\eqref{eq:separable-gf}~into~\eqref{N16-expr-A}, we obtain the desired result.
\end{proof}

\begin{thm}[Patterns N5--N8]\label{N5}
	For $p\in\{\hspace{-1mm}\pattern{scale = 0.6}{1}{1/1}{0/1,0/0},\pattern{scale = 0.6}{1}{1/1}{1/0,1/1},\pattern{scale = 0.6}{1}{1/1}{0/1,1/1},\pattern{scale = 0.6}{1}{1/1}{1/0,0/0}\}$, we have
	$$
\begin{aligned}
	F(x,t):
	&= \sum_{n \ge 1} x^n \sum_{\pi \in S_n(2413,3142)}
	t^{p(\pi)}= \frac{t(1 - x - \sqrt{1 - 6x + x^{2}})}{2}.
\end{aligned}
    $$
\end{thm}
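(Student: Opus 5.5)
The claim is that each of these patterns occurs exactly once in every separable permutation. Then the generating function is simply $t\,F(x)$, and the formula follows from \eqref{eq:separable-gf}.

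By the symmetry operations of Section~1.1 the four patterns form one class: reverse, complement and inverse preserve separability and map occurrences of a mesh pattern bijectively onto occurrences of the transformed pattern. It therefore suffices to treat a single representative, and I take $p=\pattern{scale = 0.6}{1}{1/1}{0/1,0/0}$. Here the shaded boxes $(0,0)$ and $(0,1)$ together form the whole region strictly to the left of the point. An element $\pi_i$ is an occurrence of $p$ exactly when there are no elements to its left at all, that is, when $i=1$. So for every $\pi\in S_n(2413,3142)$ with $n\ge1$ we have $p(\pi)=1$.

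To confirm that the symmetry reduction covers all four patterns, I check each one directly:
\begin{itemize}
\item $\pattern{scale = 0.6}{1}{1/1}{1/0,1/1}$ has the entire right half shaded, so its unique occurrence is the last element.
\item $\pattern{scale = 0.6}{1}{1/1}{0/1,1/1}$ has the entire top half shaded, so its unique occurrence is the maximum $n$.
\item $\pattern{scale = 0.6}{1}{1/1}{1/0,0/0}$ has the entire bottom half shaded, so its unique occurrence is the element $1$.
\end{itemize}

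Summing over separable permutations then gives
$$\sum_{n\ge1}x^n\sum_{\pi}t^{p(\pi)}=t\sum_{n\ge1}|S_n(2413,3142)|x^n=tF(x),$$
and substituting \eqref{eq:separable-gf} yields the stated expression. None of these steps is a real obstacle. The only point needing care is reading the box coordinates correctly, so that each shading covers a full half-plane; that is exactly what makes the occurrence unique.
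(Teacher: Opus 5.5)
Your proof is correct and follows essentially the same route as the paper: reduce by symmetry to the pattern with the whole left half shaded, observe that its unique occurrence is the first element, and conclude $F(x,t)=tF(x)$. Your additional direct check that the other three patterns single out the last element, the maximum $n$, and the element $1$ is accurate, and it makes the symmetry step redundant.
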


\begin{proof}
W.l.o.g., consider N5=\hspace{-1mm}$\pattern{scale = 0.6}{1}{1/1}{0/1,0/0}$. It occurs exactly once, as the leftmost element, in any separable permutation, so $F(x,t) = tF(x)$, and the result follows by~\eqref{eq:separable-gf}.
\end{proof}

\begin{thm}[Patterns N9--N12]\label{N9}
	For $p\in\{\hspace{-1mm}\pattern{scale = 0.6}{1}{1/1}{1/0,1/1,0/0},\pattern{scale = 0.6}{1}{1/1}{1/0,0/1,0/0},\pattern{scale = 0.6}{1}{1/1}{0/0,1/1,0/1},\pattern{scale = 0.6}{1}{1/1}{1/0,1/1,0/1}\}$, we have
	$$
	\begin{aligned}
		F(x,t):
		&= \sum_{n \ge 1} x^n \sum_{\pi \in S_n(2413,3142)}
		t^{p(\pi)}  \\[4pt]
		&= \frac{1}{2}\!\left(
		1 - x - \sqrt{1 - 6x + x^{2}}
		-(t-1)\,x\left(x-3 + \sqrt{1 - 6x + x^{2}}\right)
		\right).
	\end{aligned}
	$$
\end{thm}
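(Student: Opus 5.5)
By the symmetry operations, which preserve separability and map N9 to N10, N11 and N12 (as grouped in Table~\ref{tab:length-1}), it suffices to treat N9 $=\pattern{scale = 0.6}{1}{1/1}{1/0,1/1,0/0}$. The plan is to show that this pattern occurs at most once in any permutation, and exactly when the permutation ends with $1$. Counting those separable permutations then gives the generating function directly, in the same spirit as the proofs of Theorems~\ref{N16} and~\ref{N5}.

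\emph{Step 1: interpret the pattern.} An occurrence is an element $\pi_i$ such that the three shaded regions contain no other elements. The shaded boxes $(1,0)$ and $(1,1)$ together cover everything to the right of $\pi_i$, so $i=n$. The shaded box $(0,0)$ covers everything to the left of and below $\pi_i$. Combined with $i=n$, this means no element is smaller than $\pi_n$, so $\pi_n=1$. Hence $p(\pi)\in\{0,1\}$, and $p(\pi)=1$ exactly when $\pi_n=1$.

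\emph{Step 2: count separable permutations ending in $1$.} For $n=1$ the only such permutation is $1$. For $n\ge 2$, I will show that the map $\sigma\mapsto\sigma\ominus 1$ is a bijection from separable permutations of length $n-1$ to separable permutations of length $n$ ending in $1$.
\begin{itemize}
\item[(a)] If $\sigma$ is separable, then $\sigma\ominus1$ is separable by the definition via $\oplus$ and $\ominus$.
\item[(b)] Conversely, deleting the final $1$ from a separable $\pi$ and reducing gives a permutation $\sigma$. It still avoids $2413$ and $3142$, since pattern avoidance is inherited by subsequences. So $\sigma$ is separable and $\pi=\sigma\ominus1$.
\end{itemize}
Therefore the generating function for separable permutations ending in $1$ is $x+xF(x)=x(1+F(x))$.

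\emph{Step 3: assemble.} The generating function splits as
$$F(x,t)=F(x)+(t-1)\,x\,(1+F(x)),$$
since every separable permutation contributes $t^0$ except those ending in $1$, which contribute $t$. Substituting~\eqref{eq:separable-gf} gives $1+F(x)=\tfrac12\bigl(3-x-\sqrt{1-6x+x^2}\bigr)$. Hence
$$(t-1)\,x\,(1+F(x))=-\tfrac12\,(t-1)\,x\bigl(x-3+\sqrt{1-6x+x^2}\bigr),$$
which is exactly the claimed formula.

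There is no real obstacle here. The only point that needs care is Step 1: the shading must force the element to be both last and globally minimal, so the statistic is an indicator rather than a count.
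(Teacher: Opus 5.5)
Your proof is correct and follows the same route as the paper. You reduce to N9 by symmetry, observe that the only possible occurrence is a final element $1$, count such separable permutations as $x(1+F(x))$, and assemble $F(x,t)=F(x)+(t-1)\,x\,(1+F(x))$; your explicit bijection $\sigma\mapsto\sigma\ominus 1$ simply spells out the paper's remark that these permutations arise by appending a new smallest element to an arbitrary (possibly empty) separable permutation.
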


\begin{proof}
W.l.o.g., consider the pattern N9 $=\hspace{-1mm}\pattern{scale = 0.6}{1}{1/1}{1/0,1/1,0/0}$. 
Only the element $1$ occurring at the end of a permutation can be an occurrence of the pattern 
\hspace{-1mm}$\pattern{scale = 0.6}{1}{1/1}{1/0,1/1,0/0}$, and in this position, $1$ cannot be part of an occurrence 
of the patterns $2413$ or $3142$. Hence, if $A(x)$ denotes the g.f.\ for separable permutations that end with 
their smallest element, we have
\begin{equation}\label{N9-expr-A}
    F(x,t) = tA(x) + F(x) - A(x).
\end{equation}
Moreover, any permutation counted by $A(x)$ can be generated by taking any separable permutation or the empty 
permutation and adjoining the new smallest element to its right, so $A(x) = x\,(F(x)+1)$. Together with 
 \eqref{eq:separable-gf} and \eqref{N9-expr-A}, this gives the desired result.
\end{proof}

Occurrences of the pattern $\hspace{-1mm}\pattern{scale=0.5}{1}{1/1}{0/1,1/0}$ are known as \emph{strict fixed points}. 
The avoidance of this pattern on permutations was studied in \cite{Baril}, and its distribution in~\cite{KZ}. 
We extend the study of the distribution of strict fixed points to separable permutations. 
Since separable permutations are invariant under the reverse operation, the distribution of the pattern 
N13 $= \hspace{-1mm}\pattern{scale = 0.6}{1}{1/1}{0/1,1/0}$ coincides with that of the pattern 
N14 $= \hspace{-1mm}\pattern{scale = 0.6}{1}{1/1}{0/0,1/1}$. 
Moreover, we obtain the joint distribution of the patterns 
 \hspace{-1mm}$\pattern{scale = 0.6}{1}{1/1}{0/1,1/0}$ and 
 \hspace{-1mm}$\pattern{scale = 0.6}{1}{1/1}{0/0,1/1}$. 
The following theorem establishes the distributions of the mesh patterns N13 and N14, as well as their joint distribution.

\begin{thm}[Patterns N13 and N14]\label{joint-separ-length-1}
		We have $\hspace{-1mm}\pattern{scale = 0.6}{1}{1/1}{0/1,1/0}\sim_{jd}
 \hspace{-1mm}\pattern{scale = 0.6}{1}{1/1}{0/0,1/1}$. Moreover, if
	$$
	F(x,t) := \sum_{n \ge 1} x^n \sum_{\pi \in S_n(2413,3142)} 
	t^{\hspace{-1mm}\pattern{scale = 0.5}{1}{1/1}{0/1,1/0}(\pi)}
	= \sum_{n \ge 1} x^n \sum_{\pi \in S_n(2413,3142)} 
	t^{\hspace{-1mm}\pattern{scale = 0.5}{1}{1/1}{0/0,1/1}(\pi)}\mbox{ \ \ and }
	$$
	$$
	F(x,t_1,t_2) := \sum_{n \ge 1} x^n \sum_{\pi \in S_n(2413,3142)} 
	t_1^{\hspace{-1mm}\pattern{scale = 0.5}{1}{1/1}{0/1,1/0}(\pi)}
	t_2^{\hspace{-1mm}\pattern{scale = 0.5}{1}{1/1}{0/0,1/1}(\pi)},
	$$
then
	\begin{equation}\label{eq:separ-length-1}
		F(x , t) = \frac{1 + (4t-3)x - \sqrt{1 + (x-6)x}}
		{3 + (3 - 4t)x + \sqrt{1 + (x-6)x}};
		\end{equation}
	\begin{align}
	\label{eq:joint-separ-length-1}
		F(x,t_1,t_2)  & = 
        \frac{1}{4} \Bigg(4t_1t_2x + \frac{\big(-1 + (3 - 4t_1)x + \sqrt{1 - 6x + x^2}\big)^2}
       {3 + (3 - 4t_1)x + \sqrt{1 - 6x + x^2}} + \nonumber \\ 
       &\frac{\big(-1 + (3 - 4t_2)x + \sqrt{1 - 6x + x^2}\big)^2} {3 + (3 - 4t_2)x + \sqrt{1 - 6x + x^2}}\Bigg).
	\end{align}
	\end{thm}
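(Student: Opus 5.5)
The plan is to read both patterns structurally and then decompose separable permutations by their first-level $\oplus$/$\ominus$ structure. An occurrence of N13 $=\pattern{scale = 0.5}{1}{1/1}{0/1,1/0}$ at $\pi_i$ means every element to the left of $\pi_i$ is smaller than it and every element to the right is larger. Equivalently, $\pi=A\oplus 1\oplus B$ with $\pi_i$ playing the role of the middle $1$ and $A,B$ possibly empty. Dually, an occurrence of N14 $=\pattern{scale = 0.5}{1}{1/1}{0/0,1/1}$ means $\pi=A\ominus 1\ominus B$.

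\emph{Step 1: the two patterns cannot coexist for $n\ge2$.} A permutation of length $n\ge 2$ cannot be both sum-decomposable and skew-decomposable: in the first case, with $\pi=\alpha\oplus\beta$, we have $\pi_1<\pi_n$, and in the second $\pi_1>\pi_n$. Moreover, every separable permutation of length $n\ge2$ is one or the other, by definition of separability. Hence a separable $\pi$ of length $n\ge2$ falls into exactly one of two cases:
\begin{itemize}
\item[(a)] $\pi$ is sum-decomposable, so it has no N14 occurrences, and its N13 occurrences are exactly its $\oplus$-components equal to $1$;
\item[(b)] $\pi$ is skew-decomposable, which is symmetric to (a).
\end{itemize}
The permutation $1$ contributes $t_1t_2x$. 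Complement is a bijection on separable permutations that swaps $\oplus$ with $\ominus$ and sends N13 to N14. Writing $P(x,t)$ for the g.f.\ of sum-decomposable separable permutations with $t$ marking N13, this gives
\[
F(x,t_1,t_2)=t_1t_2x+P(x,t_1)+P(x,t_2).
\]
This expression is symmetric in $t_1,t_2$, which immediately proves $\mathrm{N13}\sim_{jd}\mathrm{N14}$. Setting $t_2=1$ then yields $F(x,t)=tx+P(x,t)+P(x,1)$.

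\emph{Step 2: compute $P$.} Write a sum-decomposable $\pi$ uniquely as $C_1\oplus\cdots\oplus C_k$ with $k\ge2$ and each $C_j$ a sum-indecomposable separable permutation. Each such $C_j$ is either $1$ or skew-decomposable. Let $I(x)$ be the g.f.\ of sum-indecomposable separable permutations. From $F=I/(1-I)$ we get $I=F/(1+F)$, which agrees with Table~\ref{tab-dist-rmax-thm}. Marking components equal to $1$ by $t$ gives the component g.f.\ $I_t=I-x+tx$, and hence
\[
P(x,t)=\frac{I_t^2}{1-I_t}.
\]

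\emph{Step 3: simplify to the stated closed forms.} Substitute \eqref{eq:separable-gf} into $I=F/(1+F)$, and then into $P$ and $F(x,t)=tx+P(x,t)+P(x,1)$. Using $\sqrt{1-6x+x^2}$ identities, rationalize the result into \eqref{eq:separ-length-1} and \eqref{eq:joint-separ-length-1}. A convenient form is $I=\bigl(1+x-\sqrt{1-6x+x^2}\bigr)/4$, so that
\[
I_t=\frac{1+(4t-3)x-\sqrt{1-6x+x^2}}{4},
\]
which matches the numerators appearing in \eqref{eq:joint-separ-length-1}.

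The only real work is this algebraic simplification: in particular, checking that $tx+P(x,t)+P(x,1)$ collapses to the compact single fraction \eqref{eq:separ-length-1}. The combinatorial steps above are straightforward.
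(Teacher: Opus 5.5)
Your proof is correct. It has the same skeleton as the paper's. You split permutations of length $\ge 2$ into reducible and irreducible ones, observe that N14 cannot occur in the former and N13 cannot occur in the latter, and transfer one case to the other by a symmetry. You use complement where the paper uses reverse; both swap the two patterns. The difference is in how you count the reducible case. The paper splits off only the last $\oplus$-component, the block containing $n$: either $n$ alone, or an irreducible block of length at least $2$ counted by $\frac{F(x)-x}{2}$. It lets the prefix be an arbitrary separable permutation weighted by the single-variable series $F(x,t_1)$. This gives a linear equation that must be solved at $t_2=1$ and then substituted back. You instead expand into the full sequence of $\oplus$-components and use $I=F/(1+F)=\bigl(1+x-\sqrt{1-6x+x^2}\bigr)/4$. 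That gives $P(x,t)=I_t^2/(1-I_t)$ directly, with no equation to solve, and it explains the shape of \eqref{eq:joint-separ-length-1}. Since $4I_t=1+(4t-3)x-\sqrt{1-6x+x^2}$ and $4(1-I_t)=3+(3-4t)x+\sqrt{1-6x+x^2}$, after distributing the factor $\frac14$ its three summands are exactly $t_1t_2x$, $P(x,t_1)$ and $P(x,t_2)$. The simplification you call the real work is also one line. By your complement bijection, $P(x,1)$ counts the skew-decomposable permutations, which are exactly the sum-indecomposable ones of length at least $2$. Hence $P(x,1)=I-x$ and $tx+P(x,t)+P(x,1)=I_t+I_t^2/(1-I_t)=I_t/(1-I_t)$. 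This is \eqref{eq:separ-length-1} once numerator and denominator are multiplied by $4$. Equivalently, apply your decomposition to all separable permutations, allowing a single component, to get $F(x,t)=\sum_{k\ge1}I_t^k$. Your route thus avoids solving any functional equation and makes the closed forms transparent. The paper's recursive route stays within the block decomposition of Figure~\ref{sepStructure}, which it reuses in later proofs.
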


\begin{figure}[t]
	\centering
	
	% ===== (a) =====
	\begin{subfigure}{0.47\textwidth}
		\centering
		\begin{tikzpicture}[line width=0.5pt,scale=0.24,baseline=(bb.center)]
			\path[use as bounding box] (0,-10) rectangle (24,1);
			\coordinate (bb) at (12,-5); 
			\coordinate (O) at (0,0);
			\path (15,1)  node {$n$};
			\draw [dashed] (0,0)--++(24,0);
			\draw [dashed] (0,-5)--++(24,0);
			\fill[black!100] (O)++(15,0) circle(1.5ex);
			\draw (10,-1) rectangle (20,-4);
			\path (15,-2.5)  node {$A$};
			\draw (4,-6) rectangle (8,-9);
			\path (6,-7.5)  node {$B$};
			\draw [dashed] (15,0)--++(0,-10);
			\draw [dashed] (9,0)--++(0,-10);
			\draw [dashed] (3,0)--++(0,-10);
			\draw [dashed] (21,0)--++(0,-10);
		\end{tikzpicture}
		\subcaption{}\label{fig:joint-separ-length-1-a}
	\end{subfigure}
	\hfill
	% ===== (b) =====
	\begin{subfigure}{0.47\textwidth}
		\centering
		\begin{tikzpicture}[line width=0.5pt,scale=0.24,baseline=(bb.center)]
		\path[use as bounding box] (6,-10) rectangle (30,1);
		\coordinate (bb) at (18,-5); 
			\coordinate (O) at (0,0);
			\path (15,1)  node {$n$};
			\draw [dashed] (6,0)--++(24,0);
			\draw [dashed] (6,-5)--++(24,0);
			\fill[black!100] (O)++(15,0) circle(1.5ex);
			\draw (10,-1) rectangle (20,-4);
			\path (15,-2.5)  node {$A$};
			\draw (22,-6) rectangle (26,-9);
			\path (24,-7.5)  node {$B$};
			\draw [dashed] (15,0)--++(0,-10);
			\draw [dashed] (9,0)--++(0,-10);
			\draw [dashed] (27,0)--++(0,-10);
			\draw [dashed] (21,0)--++(0,-10);
		\end{tikzpicture}
		\subcaption{}\label{fig:joint-separ-length-1-b}
	\end{subfigure}
	
	\caption{Related to the proof of Theorem~\ref{joint-separ-length-1}.}\label{fig:joint-separ-length-1}
\end{figure}

\begin{proof}
The joint equidistribution follows by applying the reverse operation that swaps occurrences of the patterns. Then we utilize the structure of a separable permutation $\pi$ of length $n$ shown in Figure~\ref{sepStructure} and consider two cases, depending on whether  $\pi$ is reducible or not.
 
\noindent \textbf{Case 1.} In the reducible case (see Figure~\ref{fig:joint-separ-length-1-a}), the block~$B$ must be nonempty, and can be any separable permutation. Case 1 can be divided into two subcases:

\begin{itemize}
		\item[(a)] $A = \varepsilon$ and $B \neq \varepsilon$. In this subcase, we obtain the term $x t_1 F(x,t_1)$,
since the element $n$ contributes an occurrence of the pattern
 \hspace{-1mm}$\pattern{scale = 0.6}{1}{1/1}{0/1,1/0}$ but not of
 \hspace{-1mm}$\pattern{scale = 0.6}{1}{1/1}{0/0,1/1}$.
Moreover, because of $n$, no element in $B$ can be an occurrence of
 \hspace{-1mm}$\pattern{scale = 0.6}{1}{1/1}{0/0,1/1}$, while $n$ cannot destroy any occurrence of the pattern
 \hspace{-1mm}$\pattern{scale = 0.6}{1}{1/1}{0/1,1/0}$ in $B$.

	\item[(b)] $A \neq \varepsilon$ and $B \neq \varepsilon$. In this subcase, to be disjoint from~(a),
$A$ has at least one element to the right of $n$, and by the structure shown in
Figure~\ref{sepStructure}, the minimum element $m$ of $A$ lies to the right of $n$.
Therefore, in this subcase we obtain the term
\[
\frac{F(x)-x}{2}\,F(x,t_1).
\]
Indeed,
\begin{itemize}
  \item[$\bullet$] 
  $B$ contributes the term $F(x,t_1)$, since, because of $n$, it cannot contribute any
  occurrences of the pattern
  \hspace{-1mm}$\pattern{scale = 0.6}{1}{1/1}{0/0,1/1}$, and is otherwise independent of the rest of $\pi$.

  \item[$\bullet$] 
  Since $B \neq \varepsilon$, the rest of $\pi$ cannot contribute any
  occurrences of
  \hspace{-1mm}$\pattern{scale = 0.6}{1}{1/1}{0/0,1/1}$.
  Moreover, because of $m$ (resp., $n$), no element to the left (resp., to the right) of $n$,
  including $n$ itself, can contribute an occurrence of the pattern
  \hspace{-1mm}$\pattern{scale = 0.6}{1}{1/1}{0/1,1/0}$.
  Therefore, $A$ together with $n$, being an irreducible permutation of length at least~$2$,
  contributes the term $\frac{F(x)-x}{2}$ (by reversal symmetry, half of separable
  permutations are reducible and half are irreducible).
\end{itemize}
\end{itemize}

\noindent \textbf{Case 2.}  In the irreducible case (see Figure~\ref{fig:joint-separ-length-1-b}), the block $B$ is nonempty, except in the trivial case $n=1$, and can be any separable permutation. We consider three subcases.

\begin{itemize}
	\item[(a)] $A=B=\varepsilon$, then  $\pi=1$ giving the term of  $t_1t_2x$.

	\item[(b)] $A = \varepsilon$ and $B \neq \varepsilon$, which gives $xt_2\,F(x,t_2)$ by reversing the permutations in Case~1(a).	
	\item[(c)]$A \neq \varepsilon$ and $B \neq \varepsilon$ , which gives $\frac{F(x) - x}{2}\, F(x,t_2)$ by reversing the permutations in Case 1(b).
	
\end{itemize}
	Combining the two cases above, we obtain
\begin{equation}\label{equ:joint-separ}
	F(x,t_1,t_2) = x t_1 t_2 + x t_1\, F(x,t_1) + x t_2\, F(x,t_2) + \frac{F(x) - x}{2}\, \big(F(x,t_1) + F(x,t_2)\big).
\end{equation}
Letting $t_2=1$ in~\eqref{equ:joint-separ}, we get
\begin{equation}\label{equ:joint-separ-A}
	F(x,t_1) = x t_1 + x t_1\, F(x,t_1) + x \, F(x) + \frac{F(x) - x}{2}\, \big(F(x,t_1) + F(x)\big).
\end{equation}

Substituting~\eqref{eq:separable-gf} into~\eqref{equ:joint-separ-A}, solving for $F(x,t_1)$,
and then replacing $t_1$ by $t$ yields~\eqref{eq:separ-length-1}.
Finally, substituting $F(x,t_1)$ and $F(x,t_2)$, both given by~\eqref{eq:separ-length-1},
into~\eqref{equ:joint-separ} yields~\eqref{eq:joint-separ-length-1}.\end{proof}

\section{Equidistribution of mesh patterns of length 2 with the same shading on separable permutations} \label{section:3}

There are $n!\cdot 2^{(n+1)^2}$ mesh patterns of length~$n$, and for $n=2$ there are already $1024$ distinct patterns, among which $2^{9}=512$ have distinct shadings. Computer experiments indicate that, on separable permutations, when the patterns $12$ and $21$ are regarded as mesh patterns with selectively shaded boxes, at most $124$ equidistributed pairs can exist.

In this section, we employ symmetry operations and generating function methods to prove that the $124$ pairs of mesh patterns presented in Tables~\ref{tab:sym-1group}--\ref{tab:g.f.} are equidistributed. These pairs are further classified into $38$ distinct symmetry-induced classes according to the corresponding symmetry operations.

\subsection{Symmetry operations}
Out of the $124$ equidistributed pairs obtained, $112$ pairs can be naturally derived through (composition of) symmetry operations. The explicit correspondences and symmetry-induced groupings of these pairs are presented in Tables~\ref{tab:sym-1group}--\ref{tab:sym-4group}.

\begin{table}[ht]
{
\renewcommand{\arraystretch}{1.5}
\setlength{\tabcolsep}{5pt}
\begin{center}
\begin{tabular}{|c c |c|| c c|c|| c c|c|}
\hline
\footnotesize{Nr.} & \footnotesize{Patterns} &
\footnotesize{Ref.} &
\footnotesize{Nr.} & \footnotesize{Patterns} &
\footnotesize{Ref.} &
\footnotesize{Nr.} & \footnotesize{Patterns} &
\footnotesize{Ref.} \\
\hline
\hline

\footnotesize{S1} &
$\pattern{scale = 0.6}{2}{1/1,2/2}
{0/0,0/1,0/2,1/0,1/1,1/2,2/0,2/1,2/2}\pattern{scale = 0.6}{2}{1/2,2/1}{0/0,0/1,0/2,1/0,1/1,1/2,2/0,2/1,2/2}$ &
\footnotesize{\multirow{1.2}{1.3cm}{Thm.~\ref{equ:Nr.1}}} &
\footnotesize{S2} &
$\pattern{scale = 0.6}{2}{1/1,2/2}{0/0,0/1,0/2,1/0,1/2,2/0,2/1,2/2}\pattern{scale = 0.6}{2}{1/2,2/1}{0/0,0/1,0/2,1/0,1/2,2/0,2/1,2/2}$ &
\footnotesize{\multirow{1.2}{1.3cm}{Thm.~\ref{equ:Nr.2}}} &
\footnotesize{S3} &
$\pattern{scale = 0.6}{2}{1/1,2/2}{0/0,0/2,1/1,2/0,2/2}\pattern{scale = 0.6}{2}{1/2,2/1}{0/0,0/2,1/1,2/0,2/2}$ & \footnotesize{\multirow{1.2}{1.3cm}{}} \\[5pt]  
\hline

\footnotesize{S4} &
$\pattern{scale = 0.6}{2}{1/1,2/2}
{0/1,1/0,1/2,2/1}\pattern{scale = 0.6}{2}{1/2,2/1}
{0/1,1/0,1/2,2/1}$ &
&
\footnotesize{S5} &
$\pattern{scale = 0.6}{2}{1/1,2/2}
{0/1,1/0,1/1,1/2,2/1}\pattern{scale = 0.6}{2}{1/2,2/1}{0/1,1/0,1/1,1/2,2/1}$ &
\footnotesize{\multirow{1.2}{1.3cm}{Thm.~\ref{equ:Nr.5}}} &
\footnotesize{S6} &
$\pattern{scale = 0.6}{2}{1/1,2/2}
{0/0,0/2,2/0,2/2}\pattern{scale = 0.6}{2}{1/2,2/1}
{0/0,0/2,2/0,2/2}$ & \footnotesize{\multirow{1.2}{1.3cm}{}} 
 \\[5pt] 
\hline
\footnotesize{S7} &
$\pattern{scale = 0.6}{2}{1/1,2/2}
{1/1}\pattern{scale = 0.6}{2}{1/2,2/1}
{1/1}$ &
 &
\footnotesize{S8} &
$\pattern{scale = 0.6}{2}{1/1,2/2}
{}\pattern{scale = 0.6}{2}{1/2,2/1}{}$ &
 \\[5pt]
\cline{1-6}
\end{tabular}
\end{center}
}
\vspace{-0.5cm}
\caption{Symmetry-induced equidistribution classes of mesh patterns with a single pattern per class, and references to their known enumeration.}
\label{tab:sym-1group}
\end{table}

For the patterns $12$ and $21$, both the reverse and the complement operations transform one into the other; that is, applying either operation maps $12$ to $21$, and vice versa. In all pairs of mesh patterns listed in Tables~\ref{tab:sym-1group}--\ref{tab:sym-4group}, the shaded regions are symmetric with respect to the horizontal or vertical axis, corresponding to the complement and reverse operations, respectively. Consequently, for each of the $112$ pairs of mesh patterns shown in these three tables, the two patterns within each pair are equidistributed under the action of the reverse or complement symmetry.

In Table~\ref{tab:sym-1group}, the single pair of mesh patterns is invariant under all three symmetries. In Table~\ref{tab:sym-2group}, each frame contains two such pairs that can be mapped to each other by one of the inverse, complement, or reverse operations. Similarly, in Table~\ref{tab:sym-4group}, each frame comprises four pairs of mesh patterns that can be mutually mapped and readily verified via these three symmetries or their compositions.

\begin{table}[ht]
{
\renewcommand{\arraystretch}{1.5}    
\setlength{\tabcolsep}{5pt}          
\begin{center}
\begin{tabular}{|c c c c|c||c c c c|c|}
\hline
\footnotesize{Nr.} & \footnotesize{Patterns} &
\footnotesize{Nr.} & \footnotesize{Patterns} &
\footnotesize{Ref.} &
\footnotesize{Nr.} & \footnotesize{Patterns} &
\footnotesize{Nr.} & \footnotesize{Patterns} &
\footnotesize{Ref.} \\
\hline
\hline

\footnotesize{S9} &
$\pattern{scale = 0.6}{2}{1/1,2/2}
{0/0,0/1,0/2,2/0,2/1,2/2}\pattern{scale = 0.6}{2}{1/2,2/1}{0/0,0/1,0/2,2/0,2/1,2/2}$ &
\footnotesize{S10} &
$\pattern{scale = 0.6}{2}{1/1,2/2}
{0/0,0/2,1/0,1/2,2/0,2/2}\pattern{scale = 0.6}{2}{1/2,2/1}{0/0,0/2,1/0,1/2,2/0,2/2}$ &
\footnotesize{\multirow{1.2}{1.3cm}{Thm.~\ref{equ:Nr.9}}}&
\footnotesize{S11} &
$\pattern{scale = 0.6}{2}{1/1,2/2}
{0/0,0/2,1/0,1/1,1/2,2/0,2/2}\pattern{scale = 0.6}{2}{1/2,2/1}{0/0,0/2,1/0,1/1,1/2,2/0,2/2}$ &
\footnotesize{S12} &
$\pattern{scale = 0.6}{2}{1/1,2/2}{0/0,0/1,0/2,1/1,2/0,2/1,2/2}\pattern{scale = 0.6}{2}{1/2,2/1}{0/0,0/1,0/2,1/1,2/0,2/1,2/2}$ &
\footnotesize{\multirow{1.2}{1.3cm}{Thm.~\ref{equ:Nr.11}}} \\[5pt]
\hline
\footnotesize{S13} &
$\pattern{scale = 0.6}{2}{1/1,2/2}
{1/0,1/1,1/2}\pattern{scale = 0.6}{2}{1/2,2/1}
{1/0,1/1,1/2}$&
\footnotesize{S14} &
$\pattern{scale = 0.6}{2}{1/1,2/2}
{0/1,1/1,2/1}\pattern{scale = 0.6}{2}{1/2,2/1}
{0/1,1/1,2/1}$ &
\footnotesize{\multirow{1.2}{1.3cm}{Thm.~\ref{equ:Nr.13}}} &
\footnotesize{S15} &
$\pattern{scale = 0.6}{2}{1/1,2/2}
{1/0,1/2}\pattern{scale = 0.6}{2}{1/2,2/1}
{1/0,1/2}$ &
\footnotesize{S16} &
$\pattern{scale = 0.6}{2}{1/1,2/2}
{0/1,2/1}\pattern{scale = 0.6}{2}{1/2,2/1}
{0/1,2/1}$ &
 \\[5pt]
\hline
	\end{tabular}
\end{center} 
}
\vspace{-0.5cm}
 	\caption{Symmetry-induced equidistribution classes of mesh patterns with two patterns per class, and references to their known enumeration.}\label{tab:sym-2group}
\end{table}

{
\setlength{\LTcapwidth}{\textwidth}
\captionsetup[table]{margin=0pt}
\renewcommand{\arraystretch}{1.85}   
\setlength{\tabcolsep}{5pt}          
\footnotesize
\begin{longtable}{|c c c c c c c c|c|}
\hline
\footnotesize{Nr.} & {\footnotesize Patterns} &
\footnotesize{Nr.} & {\footnotesize Patterns} &
\footnotesize{Nr.} & {\footnotesize Patterns} &
\footnotesize{Nr.} & {\footnotesize Patterns} &
{\footnotesize Ref.} \\
\hline
\hline
\endfirsthead

\hline
\footnotesize{Nr.} & {\footnotesize Patterns} &
\footnotesize{Nr.} & {\footnotesize Patterns} &
\footnotesize{Nr.} & {\footnotesize Patterns} &
\footnotesize{Nr.} & {\footnotesize Patterns} &
{\footnotesize Ref.} \\
\hline
\hline
\endhead

\footnotesize{S17} & $\pattern{scale = 0.6}{2}{1/1,2/2}{0/0,0/1,0/2,1/1,1/2,2/1,2/2,2/0}\pattern{scale = 0.6}{2}{1/2,2/1}{0/0,0/1,0/2,1/1,1/2,2/1,2/2,2/0}$  &
\footnotesize{S18} & $\pattern{scale = 0.6}{2}{1/1,2/2}{0/0,0/1,1/0,0/2,1/1,2/1,2/2,2/0}\pattern{scale = 0.6}{2}{1/2,2/1}{0/0,0/1,1/0,0/2,1/1,2/1,2/2,2/0}$  &
\footnotesize{S19} & $\pattern{scale = 0.6}{2}{1/1,2/2}{0/0,0/1,1/0,0/2,1/1,1/2,2/2,2/0}\pattern{scale = 0.6}{2}{1/2,2/1}{0/0,0/1,1/0,0/2,1/1,1/2,2/2,2/0}$  &
\footnotesize{S20} & $\pattern{scale = 0.6}{2}{1/1,2/2}{0/0,1/0,0/2,1/1,1/2,2/1,2/2,2/0}\pattern{scale = 0.6}{2}{1/2,2/1}{0/0,1/0,0/2,1/1,1/2,2/1,2/2,2/0}$  &
\footnotesize{\multirow{1.2}{1.3cm}{Thm.~\ref{equ:Nr.17}}}\\[5pt]
\hline
\footnotesize{S21} & $\pattern{scale = 0.6}{2}{1/1,2/2}{0/0,0/1,1/0,1/1,1/2,2/1,2/0}\pattern{scale = 0.6}{2}{1/2,2/1}{0/0,0/1,1/0,1/1,1/2,2/1,2/0}$  &
\footnotesize{S22}& $\pattern{scale = 0.6}{2}{1/1,2/2}{2/2,0/1,1/0,1/1,1/2,2/1,0/2}\pattern{scale = 0.6}{2}{1/2,2/1}{2/2,0/1,1/0,1/1,1/2,2/1,0/2}$  &
\footnotesize{S23}&$\pattern{scale = 0.6}{2}{1/1,2/2}{0/0,0/1,1/0,1/1,1/2,2/1,0/2}\pattern{scale = 0.6}{2}{1/2,2/1}{0/0,0/1,1/0,1/1,1/2,2/1,0/2}$  &
\footnotesize{S24} &$\pattern{scale = 0.6}{2}{1/1,2/2}{2/0,0/1,1/0,1/1,1/2,2/1,2/2}\pattern{scale = 0.6}{2}{1/2,2/1}{2/0,0/1,1/0,1/1,1/2,2/1,2/2}$  &
\footnotesize{\multirow{1.2}{1.3cm}{Thm.~\ref{equ:Nr.21}}}\\[5pt]
\hline
\footnotesize{S25} & $\pattern{scale = 0.6}{2}{1/1,2/2}{0/0,0/1,0/2,1/0,2/0,2/1,2/2}\pattern{scale = 0.6}{2}{1/2,2/1}{0/0,0/1,0/2,1/0,2/0,2/1,2/2}$  &
\footnotesize{S26} & $\pattern{scale = 0.6}{2}{1/1,2/2}{0/0,0/1,0/2,1/2,2/0,2/1,2/2}\pattern{scale = 0.6}{2}{1/2,2/1}{0/0,0/1,0/2,1/2,2/0,2/1,2/2}$  &
\footnotesize{S27} & $\pattern{scale = 0.6}{2}{1/1,2/2}{0/0,0/2,1/0,1/2,2/0,2/1,2/2}\pattern{scale = 0.6}{2}{1/2,2/1}{0/0,0/2,1/0,1/2,2/0,2/1,2/2}$  &
\footnotesize{S28} & $\pattern{scale = 0.6}{2}{1/1,2/2}{0/0,0/1,0/2,1/0,1/2,2/0,2/2}\pattern{scale = 0.6}{2}{1/2,2/1}{0/0,0/1,0/2,1/0,1/2,2/0,2/2}$  &
\footnotesize{\multirow{1.2}{1.3cm}{Thm.~\ref{equ:Nr.21}}}\\[5pt]
\hline
\footnotesize{S29} & $\pattern{scale = 0.6}{2}{1/1,2/2}{0/0,0/1,0/2,1/0,1/1,1/2}\pattern{scale = 0.6}{2}{1/2,2/1}{0/0,0/1,0/2,1/0,1/1,1/2}$ &
\footnotesize{S30}& $\pattern{scale = 0.6}{2}{1/1,2/2}{0/1,0/2,1/1,1/2,2/1,2/2}\pattern{scale = 0.6}{2}{1/2,2/1}{0/1,0/2,1/1,1/2,2/1,2/2}$  &
\footnotesize{S31} & $\pattern{scale = 0.6}{2}{1/1,2/2}{1/0,1/1,1/2,2/0,2/1,2/2}\pattern{scale = 0.6}{2}{1/2,2/1}{1/0,1/1,1/2,2/0,2/1,2/2}$  &
\footnotesize{S32} & $\pattern{scale = 0.6}{2}{1/1,2/2}{0/0,0/1,1/0,1/1,2/0,2/1}\pattern{scale = 0.6}{2}{1/2,2/1}{0/0,0/1,1/0,1/1,2/0,2/1}$& \footnotesize{\multirow{1.2}{1.3cm}{Thm.~\ref{equ:Nr.9}}}\\[5pt]
\hline
\footnotesize{S33} & $\pattern{scale = 0.6}{2}{1/1,2/2}{0/0,0/1,1/1,1/2,2/0,2/1}\pattern{scale = 0.6}{2}{1/2,2/1}{0/0,0/1,1/1,1/2,2/0,2/1}$ &
\footnotesize{S34} & $\pattern{scale = 0.6}{2}{1/1,2/2}{0/1,1/0,1/1,1/2,2/0,2/2}\pattern{scale = 0.6}{2}{1/2,2/1}{0/1,1/0,1/1,1/2,2/0,2/2}$  &
\footnotesize{S35} & $\pattern{scale = 0.6}{2}{1/1,2/2}{0/1,0/2,1/0,1/1,2/1,2/2}\pattern{scale = 0.6}{2}{1/2,2/1}{0/1,0/2,1/0,1/1,2/1,2/2}$  &
\footnotesize{S36} & $\pattern{scale = 0.6}{2}{1/1,2/2}{0/0,0/2,1/0,1/1,1/2,2/1}\pattern{scale = 0.6}{2}{1/2,2/1}{0/0,0/2,1/0,1/1,1/2,2/1}$
& \footnotesize{\multirow{1.2}{1.3cm}{Thm.~\ref{equ:Nr.33}}}\\[5pt]
\hline
\footnotesize{S37} & $\pattern{scale = 0.6}{2}{1/1,2/2}{0/0,0/1,0/2,2/0,2/2}\pattern{scale = 0.6}{2}{1/2,2/1}{0/0,0/1,0/2,2/0,2/2}$  &
\footnotesize{S38} & $\pattern{scale = 0.6}{2}{1/1,2/2}{0/0,0/2,1/2,2/0,2/2}\pattern{scale = 0.6}{2}{1/2,2/1}{0/0,0/2,1/2,2/0,2/2}$  &
\footnotesize{S39} & $\pattern{scale = 0.6}{2}{1/1,2/2}{0/0,0/2,2/0,2/1,2/2}\pattern{scale = 0.6}{2}{1/2,2/1}{0/0,0/2,2/0,2/1,2/2}$  &
\footnotesize{S40} & $\pattern{scale = 0.6}{2}{1/1,2/2}{0/0,0/2,1/0,2/0,2/2}\pattern{scale = 0.6}{2}{1/2,2/1}{0/0,0/2,1/0,2/0,2/2}$  & \footnotesize{\multirow{1.2}{1.3cm}{Thm.~\ref{equ:Nr.41}}}\\[5pt]
\hline
\footnotesize{S41} & $\pattern{scale = 0.6}{2}{1/1,2/2}{0/0,0/1,0/2,1/0,1/2}\pattern{scale = 0.6}{2}{1/2,2/1}{0/0,0/1,0/2,1/0,1/2}$  &
\footnotesize{S42} & $\pattern{scale = 0.6}{2}{1/1,2/2}{0/1,0/2,1/2,2/1,2/2}\pattern{scale = 0.6}{2}{1/2,2/1}{0/1,0/2,1/2,2/1,2/2}$  &
\footnotesize{S43} & $\pattern{scale = 0.6}{2}{1/1,2/2}{1/0,1/2,2/0,2/1,2/2}\pattern{scale = 0.6}{2}{1/2,2/1}{1/0,1/2,2/0,2/1,2/2}$  &
\footnotesize{S44} & $\pattern{scale = 0.6}{2}{1/1,2/2}{0/0,0/1,1/0,2/0,2/1}\pattern{scale = 0.6}{2}{1/2,2/1}{0/0,0/1,1/0,2/0,2/1}$  & \footnotesize{\multirow{1.2}{1.3cm}{Thm.~\ref{equ:Nr.37}}}\\[5pt]
\hline
\footnotesize{S45} & $\pattern{scale = 0.6}{2}{1/1,2/2}{0/0,0/2,1/0,1/1,2/0,2/2}\pattern{scale = 0.6}{2}{1/2,2/1}{0/0,0/2,1/0,1/1,2/0,2/2}$  &
\footnotesize{S46} & $\pattern{scale = 0.6}{2}{1/1,2/2}{0/0,0/2,1/1,1/2,2/0,2/2}\pattern{scale = 0.6}{2}{1/2,2/1}{0/0,0/2,1/1,1/2,2/0,2/2}$  &
\footnotesize{S47} & $\pattern{scale = 0.6}{2}{1/1,2/2}{0/0,0/2,1/1,2/0,2/1,2/2}\pattern{scale = 0.6}{2}{1/2,2/1}{0/0,0/2,1/1,2/0,2/1,2/2}$  &
\footnotesize{S48} & $\pattern{scale = 0.6}{2}{1/1,2/2}{0/0,0/1,0/2,1/1,2/0,2/2}\pattern{scale = 0.6}{2}{1/2,2/1}{0/0,0/1,0/2,1/1,2/0,2/2}$ & \footnotesize{\multirow{1.2}{1.3cm}{Thm.~\ref{equ:Nr.45}}}\\[5pt]
\hline
\footnotesize{S49} & $\pattern{scale = 0.6}{2}{1/1,2/2}{0/0,1/0,1/1,1/2,2/0}\pattern{scale = 0.6}{2}{1/2,2/1}{0/0,1/0,1/1,1/2,2/0}$  &
\footnotesize{S50} & $\pattern{scale = 0.6}{2}{1/1,2/2}{0/2,1/0,1/1,1/2,2/2}\pattern{scale = 0.6}{2}{1/2,2/1}{0/2,1/0,1/1,1/2,2/2}$  &
\footnotesize{S51} & $\pattern{scale = 0.6}{2}{1/1,2/2}{0/1,1/1,2/0,2/1,2/2}\pattern{scale = 0.6}{2}{1/2,2/1}{0/1,1/1,2/0,2/1,2/2}$  &
\footnotesize{S52} & $\pattern{scale = 0.6}{2}{1/1,2/2}{0/0,0/1,0/2,1/1,2/1}\pattern{scale = 0.6}{2}{1/2,2/1}{0/0,0/1,0/2,1/1,2/1}$  & \footnotesize{\multirow{1.2}{1.3cm}{Thm.~\ref{equ:Nr.49}}}\\[5pt]
\hline
\footnotesize{S53} & $\pattern{scale = 0.6}{2}{1/1,2/2}{1/1,2/0,2/1,2/2}\pattern{scale = 0.6}{2}{1/2,2/1}
{1/1,2/0,2/1,2/2}$ &
\footnotesize{S54} & $\pattern{scale = 0.6}{2}{1/1,2/2}{0/2,1/1,1/2,2/2}\pattern{scale = 0.6}{2}{1/2,2/1}
{0/2,1/1,1/2,2/2}$  &
\footnotesize{S55} & $\pattern{scale = 0.6}{2}{1/1,2/2}{0/0,1/0,1/1,2/0}\pattern{scale = 0.6}{2}{1/2,2/1}
{0/0,1/0,1/1,2/0}$ &
\footnotesize{S56} & $\pattern{scale = 0.6}{2}{1/1,2/2}{0/0,0/1,0/2,1/1}\pattern{scale = 0.6}{2}{1/2,2/1}
{0/0,0/1,0/2,1/1}$  & \footnotesize{\multirow{1.2}{1.3cm}{Thm.~\ref{equ:Nr.53}}}\\[5pt]
\hline
\footnotesize{S57} & $\pattern{scale = 0.6}{2}{1/1,2/2}{0/0,1/0,1/2,2/0}\pattern{scale = 0.6}{2}{1/2,2/1}
{0/0,1/0,1/2,2/0}$  &
\footnotesize{S58} & $\pattern{scale = 0.6}{2}{1/1,2/2}{0/2,1/0,1/2,2/2}\pattern{scale = 0.6}{2}{1/2,2/1}
{0/2,1/0,1/2,2/2}$  &
\footnotesize{S59} & $\pattern{scale = 0.6}{2}{1/1,2/2}{0/1,2/0,2/1,2/2}\pattern{scale = 0.6}{2}{1/2,2/1}
{0/1,2/0,2/1,2/2}$  &
\footnotesize{S60} & $\pattern{scale = 0.6}{2}{1/1,2/2}{0/0,0/1,0/2,2/1}\pattern{scale = 0.6}{2}{1/2,2/1}
{0/0,0/1,0/2,2/1}$  & \footnotesize{\multirow{1.2}{1.3cm}{Thm.~\ref{equ:Nr.57}}}\\[5pt]
\hline
\footnotesize{S61} & $\pattern{scale = 0.6}{2}{1/1,2/2}{0/2,1/2,2/2}\pattern{scale = 0.6}{2}{1/2,2/1}
{0/2,1/2,2/2}$ &
\footnotesize{S62} & $\pattern{scale = 0.6}{2}{1/1,2/2}{0/0,1/0,2/0}\pattern{scale = 0.6}{2}{1/2,2/1}
{0/0,1/0,2/0}$  &
\footnotesize{S63} & $\pattern{scale = 0.6}{2}{1/1,2/2}{2/0,2/1,2/2}\pattern{scale = 0.6}{2}{1/2,2/1}
{2/0,2/1,2/2}$  &
\footnotesize{S64} & $\pattern{scale = 0.6}{2}{1/1,2/2}{0/0,0/1,0/2}\pattern{scale = 0.6}{2}{1/2,2/1}
{0/0,0/1,0/2}$  & \footnotesize{\multirow{1.2}{1.3cm}{Thm.~\ref{equ:Nr.61}}}\\[5pt]
\hline
\footnotesize{S65} & $\pattern{scale = 0.6}{2}{1/1,2/2}{0/0,0/1,2/0,2/1}\pattern{scale = 0.6}{2}{1/2,2/1}
{0/0,0/1,2/0,2/1}$ &
\footnotesize{S66} & $\pattern{scale = 0.6}{2}{1/1,2/2}{1/0,1/2,2/0,2/2}\pattern{scale = 0.6}{2}{1/2,2/1}
{1/0,1/2,2/0,2/2}$  &
\footnotesize{S67} & $\pattern{scale = 0.6}{2}{1/1,2/2}{0/1,0/2,2/1,2/2}\pattern{scale = 0.6}{2}{1/2,2/1}
{0/1,0/2,2/1,2/2}$  &
\footnotesize{S68} & $\pattern{scale = 0.6}{2}{1/1,2/2}{0/0,0/2,1/0,1/2}\pattern{scale = 0.6}{2}{1/2,2/1}
{0/0,0/2,1/0,1/2}$ & \footnotesize{\multirow{1.2}{1.3cm}{Thm.~\ref{equ:Nr.65}}}\\[5pt]
\hline
\footnotesize{S69} & $\pattern{scale = 0.6}{2}{1/1,2/2}{0/0,0/1,1/0,1/2,2/0,2/1}\pattern{scale = 0.6}{2}{1/2,2/1}{0/0,0/1,1/0,1/2,2/0,2/1}$  &
\footnotesize{S70} & $\pattern{scale = 0.6}{2}{1/1,2/2}{0/1,0/2,1/0,1/2,2/1,2/2}\pattern{scale = 0.6}{2}{1/2,2/1}{0/1,0/2,1/0,1/2,2/1,2/2}$  &
\footnotesize{S71} & $\pattern{scale = 0.6}{2}{1/1,2/2}{0/1,1/0,1/2,2/0,2/1,2/2}\pattern{scale = 0.6}{2}{1/2,2/1}{0/1,1/0,1/2,2/0,2/1,2/2}$  &
\footnotesize{S72} & $\pattern{scale = 0.6}{2}{1/1,2/2}{0/0,0/1,0/2,1/0,1/2,2/1}\pattern{scale = 0.6}{2}{1/2,2/1}{0/0,0/1,0/2,1/0,1/2,2/1}$
& \footnotesize{\multirow{1.2}{1.3cm}{Thm.~\ref{equ:S69}}} \\[5pt] 
\hline
\footnotesize{S73} & $\pattern{scale = 0.6}{2}{1/1,2/2}{0/0,0/2,1/0,1/1,1/2}\pattern{scale = 0.6}{2}{1/2,2/1}{0/0,0/2,1/0,1/1,1/2}$  &
\footnotesize{S74} & $\pattern{scale = 0.6}{2}{1/1,2/2}{1/0,1/1,1/2,2/0,2/2}\pattern{scale = 0.6}{2}{1/2,2/1}{1/0,1/1,1/2,2/0,2/2}$  &
\footnotesize{S75} & $\pattern{scale = 0.6}{2}{1/1,2/2}{0/1,0/2,1/1,2/1,2/2}\pattern{scale = 0.6}{2}{1/2,2/1}{0/1,0/2,1/1,2/1,2/2}$  &
\footnotesize{S76} & $\pattern{scale = 0.6}{2}{1/1,2/2}{0/0,0/1,1/1,2/0,2/1}\pattern{scale = 0.6}{2}{1/2,2/1}{0/0,0/1,1/1,2/0,2/1}$  & \footnotesize{\multirow{1.2}{1.3cm}{Thm.~\ref{equ:Nr.73}}}\\[5pt]
\hline
\footnotesize{S77} & $\pattern{scale = 0.6}{2}{1/1,2/2}{0/0,0/2,1/0,1/2,2/1}\pattern{scale = 0.6}{2}{1/2,2/1}{0/0,0/2,1/0,1/2,2/1}$  &
\footnotesize{S78} & $\pattern{scale = 0.6}{2}{1/1,2/2}{0/1,1/0,1/2,2/0,2/2}\pattern{scale = 0.6}{2}{1/2,2/1}{0/1,1/0,1/2,2/0,2/2}$  &
\footnotesize{S79} & $\pattern{scale = 0.6}{2}{1/1,2/2}{0/1,0/2,1/0,2/1,2/2}\pattern{scale = 0.6}{2}{1/2,2/1}{0/1,0/2,1/0,2/1,2/2}$  &
\footnotesize{S80} & $\pattern{scale = 0.6}{2}{1/1,2/2}{0/0,0/1,1/2,2/0,2/1}\pattern{scale = 0.6}{2}{1/2,2/1}{0/0,0/1,1/2,2/0,2/1}$  & \footnotesize{\multirow{1.2}{1.3cm}{Thm.~\ref{equ:Nr.77}}}\\[5pt]
\hline
\footnotesize{S81} & $\pattern{scale = 0.6}{2}{1/1,2/2}{0/0,1/1,1/2,2/0}\pattern{scale = 0.6}{2}{1/2,2/1}
{0/0,1/1,1/2,2/0}$  &
\footnotesize{S82} & $\pattern{scale = 0.6}{2}{1/1,2/2}{0/2,1/0,1/1,2/2}\pattern{scale = 0.6}{2}{1/2,2/1}
{0/2,1/0,1/1,2/2}$  &
\footnotesize{S83} & $\pattern{scale = 0.6}{2}{1/1,2/2}{0/1,1/1,2/0,2/2}\pattern{scale = 0.6}{2}{1/2,2/1}
{0/1,1/1,2/0,2/2}$  &
\footnotesize{S84} & $\pattern{scale = 0.6}{2}{1/1,2/2}{0/0,1/1,0/2,2/1}\pattern{scale = 0.6}{2}{1/2,2/1}
{0/0,1/1,0/2,2/1}$  & \\[5pt]
\hline
\footnotesize{S85} & $\pattern{scale = 0.6}{2}{1/1,2/2}{0/0,1/1,2/0}\pattern{scale = 0.6}{2}{1/2,2/1}
{0/0,1/1,2/0}$  &
\footnotesize{S86} & $\pattern{scale = 0.6}{2}{1/1,2/2}{0/2,1/1,2/2}\pattern{scale = 0.6}{2}{1/2,2/1}
{0/2,1/1,2/2}$  &
\footnotesize{S87} & $\pattern{scale = 0.6}{2}{1/1,2/2}{2/0,1/1,2/2}\pattern{scale = 0.6}{2}{1/2,2/1}
{2/0,1/1,2/2}$  &
\footnotesize{S88} & $\pattern{scale = 0.6}{2}{1/1,2/2}{0/0,1/1,0/2}\pattern{scale = 0.6}{2}{1/2,2/1}
{0/0,1/1,0/2}$  & \\[5pt]
\hline

\footnotesize{S89} & $\pattern{scale = 0.6}{2}{1/1,2/2}{0/0,1/2,2/0}\pattern{scale = 0.6}{2}{1/2,2/1}
{0/0,1/2,2/0}$  &
\footnotesize{S90} & $\pattern{scale = 0.6}{2}{1/1,2/2}{0/2,1/0,2/2}\pattern{scale = 0.6}{2}{1/2,2/1}
{0/2,1/0,2/2}$  &
\footnotesize{S91} & $\pattern{scale = 0.6}{2}{1/1,2/2}{2/0,0/1,2/2}\pattern{scale = 0.6}{2}{1/2,2/1}
{2/0,0/1,2/2}$  &
\footnotesize{S92} & $\pattern{scale = 0.6}{2}{1/1,2/2}{0/0,2/1,0/2}\pattern{scale = 0.6}{2}{1/2,2/1}
{0/0,2/1,0/2}$  & \\[5pt]
\hline
\footnotesize{S93} & $\pattern{scale = 0.6}{2}{1/1,2/2}{0/0,2/0}\pattern{scale = 0.6}{2}{1/2,2/1}
{0/0,2/0}$  &
\footnotesize{S94} & $\pattern{scale = 0.6}{2}{1/1,2/2}{0/2,2/2}\pattern{scale = 0.6}{2}{1/2,2/1}
{0/2,2/2}$  &
\footnotesize{S95} & $\pattern{scale = 0.6}{2}{1/1,2/2}{2/0,2/2}\pattern{scale = 0.6}{2}{1/2,2/1}
{2/0,2/2}$  &
\footnotesize{S96} & $\pattern{scale = 0.6}{2}{1/1,2/2}{0/0,0/2}\pattern{scale = 0.6}{2}{1/2,2/1}
{0/0,0/2}$  & \\[5pt]
\hline
\footnotesize{S97} & $\pattern{scale = 0.6}{2}{1/1,2/2}{0/1,1/0,1/1,2/1}\pattern{scale = 0.6}{2}{1/2,2/1}
{0/1,1/0,1/1,2/1}$  &
\footnotesize{S98} & $\pattern{scale = 0.6}{2}{1/1,2/2}{0/1,1/2,1/1,2/1}\pattern{scale = 0.6}{2}{1/2,2/1}
{0/1,1/2,1/1,2/1}$  &
\footnotesize{S99} & $\pattern{scale = 0.6}{2}{1/1,2/2}{1/0,1/2,1/1,2/1}\pattern{scale = 0.6}{2}{1/2,2/1}
{1/0,1/2,1/1,2/1}$  &
\footnotesize{S100} & $\pattern{scale = 0.6}{2}{1/1,2/2}{1/0,1/2,1/1,0/1}\pattern{scale = 0.6}{2}{1/2,2/1}
{1/0,1/2,1/1,0/1}$  & \\[5pt]
\hline
\footnotesize{S101} & $\pattern{scale = 0.6}{2}{1/1,2/2}{0/1,1/0,2/1}\pattern{scale = 0.6}{2}{1/2,2/1}
{0/1,1/0,2/1}$  &
\footnotesize{S102} & $\pattern{scale = 0.6}{2}{1/1,2/2}{0/1,1/2,2/1}\pattern{scale = 0.6}{2}{1/2,2/1}
{0/1,1/2,2/1}$  &
\footnotesize{S103} & $\pattern{scale = 0.6}{2}{1/1,2/2}{1/0,1/2,2/1}\pattern{scale = 0.6}{2}{1/2,2/1}
{1/0,1/2,2/1}$  &
\footnotesize{S104} & $\pattern{scale = 0.6}{2}{1/1,2/2}{1/0,1/2,0/1}\pattern{scale = 0.6}{2}{1/2,2/1}
{1/0,1/2,0/1}$  & \\[5pt]
\hline
\footnotesize{S105} & $\pattern{scale = 0.6}{2}{1/1,2/2}{1/0,1/1}\pattern{scale = 0.6}{2}{1/2,2/1}
{1/0,1/1}$  &
\footnotesize{S106} & $\pattern{scale = 0.6}{2}{1/1,2/2}{1/2,1/1}\pattern{scale = 0.6}{2}{1/2,2/1}
{1/2,1/1}$  &
\footnotesize{S107} & $\pattern{scale = 0.6}{2}{1/1,2/2}{2/1,1/1}\pattern{scale = 0.6}{2}{1/2,2/1}
{2/1,1/1}$  &
\footnotesize{S108} & $\pattern{scale = 0.6}{2}{1/1,2/2}{0/1,1/1}\pattern{scale = 0.6}{2}{1/2,2/1}
{0/1,1/1}$  & \\[5pt]
\hline
\footnotesize{S109} & $\pattern{scale = 0.6}{2}{1/1,2/2}
{1/0}\pattern{scale = 0.6}{2}{1/2,2/1}
{1/0}$  &
\footnotesize{S110} & $\pattern{scale = 0.6}{2}{1/1,2/2}
{1/2}\pattern{scale = 0.6}{2}{1/2,2/1}
{1/2}$  &
\footnotesize{S111} & $\pattern{scale = 0.6}{2}{1/1,2/2}
{2/1}\pattern{scale = 0.6}{2}{1/2,2/1}
{2/1}$  &
\footnotesize{S112} & $\pattern{scale = 0.6}{2}{1/1,2/2}
{0/1}\pattern{scale = 0.6}{2}{1/2,2/1}
{0/1}$  & \\[5pt]
\hline
\caption{Symmetry-induced equidistribution classes of mesh patterns, with four patterns per class, and references to their known enumeration.}\label{tab:sym-4group}
\end{longtable}
} 

\subsection{The generating functions method}

Unlike the other pairs of mesh patterns, the equidistribution of each pair 
in Table~\ref{tab:g.f.} cannot be established solely by symmetry arguments. 
Instead, we employ generating function techniques to provide a proof.

\begin{table}[ht]
 	{
 		\renewcommand{\arraystretch}{1.5}
 		\setlength{\tabcolsep}{5pt}
 \begin{center} 
 		\begin{tabular}{|c c c c  c c c
 		c|c|}
 			\hline
 		\footnotesize{Nr.}	& {\footnotesize patterns}  & \footnotesize{Nr.}	& {\footnotesize Patterns}  & \footnotesize{Nr.}	& {\footnotesize Patterns}  & \footnotesize{Nr.}	& {\footnotesize Patterns} & {\footnotesize Ref.} \\
 			\hline
 			\hline
\footnotesize{S113} & $\pattern{scale = 0.6}{2}{1/1,2/2}{0/0,0/1,1/0,0/2,1/1,1/2,2/1,2/2}\pattern{scale = 0.6}{2}{1/2,2/1}{0/0,0/1,1/0,0/2,1/1,1/2,2/1,2/2}$  &
\footnotesize{S114} & $\pattern{scale = 0.6}{2}{1/1,2/2}{0/1,1/0,0/2,1/1,1/2,2/1,2/2,2/0}\pattern{scale = 0.6}{2}{1/2,2/1}{0/1,1/0,0/2,1/1,1/2,2/1,2/2,2/0}$  &
\footnotesize{S115} & $\pattern{scale = 0.6}{2}{1/1,2/2}{0/0,0/1,1/0,1/1,1/2,2/1,2/2,2/0}\pattern{scale = 0.6}{2}{1/2,2/1}{0/0,0/1,1/0,1/1,1/2,2/1,2/2,2/0}$  &
\footnotesize{S116} & $\pattern{scale = 0.6}{2}{1/1,2/2}{0/0,0/1,1/0,0/2,1/1,1/2,2/1,2/0}\pattern{scale = 0.6}{2}{1/2,2/1}{0/0,0/1,1/0,0/2,1/1,1/2,2/1,2/0}$ & \footnotesize{\multirow{1.2}{1.3cm}{Thm.~\ref{equ:Nr.113}}}\\[5pt]
\hline
\footnotesize{S117} & $\pattern{scale = 0.6}{2}{1/1,2/2}{0/0,0/1,1/0,1/1,1/2,2/0,2/2}\pattern{scale = 0.6}{2}{1/2,2/1}{0/0,0/1,1/0,1/1,1/2,2/0,2/2}$  &
\footnotesize{S118} & $\pattern{scale = 0.6}{2}{1/1,2/2}{0/1,0/2,1/0,1/1,1/2,2/0,2/2}\pattern{scale = 0.6}{2}{1/2,2/1}{0/1,0/2,1/0,1/1,1/2,2/0,2/2}$  &
\footnotesize{S119} & $\pattern{scale = 0.6}{2}{1/1,2/2}{0/0,0/2,1/0,1/1,1/2,2/1,2/2}\pattern{scale = 0.6}{2}{1/2,2/1}{0/0,0/2,1/0,1/1,1/2,2/1,2/2}$  &
\footnotesize{S120} & $\pattern{scale = 0.6}{2}{1/1,2/2}{0/0,0/2,1/0,1/1,1/2,2/0,2/1}\pattern{scale = 0.6}{2}{1/2,2/1}{0/0,0/2,1/0,1/1,1/2,2/0,2/1}$  
&\footnotesize{\multirow{2.5}{1.3cm}{Thm.~\ref{equ:Nr.117}}} \\[5pt]
\footnotesize{S121} & $\pattern{scale = 0.6}{2}{1/1,2/2}{0/0,0/2,0/1,1/1,1/2,2/0,2/1}\pattern{scale = 0.6}{2}{1/2,2/1}{0/0,0/2,0/1,1/1,1/2,2/0,2/1}$  &
\footnotesize{S122} & $\pattern{scale = 0.6}{2}{1/1,2/2}{0/0,0/1,1/0,1/1,0/2,2/1,2/2}\pattern{scale = 0.6}{2}{1/2,2/1}{0/0,0/1,1/0,1/1,0/2,2/1,2/2}$  &
\footnotesize{S123} & $\pattern{scale = 0.6}{2}{1/1,2/2}{0/0,1/2,0/1,1/1,2/0,2/1,2/2}\pattern{scale = 0.6}{2}{1/2,2/1}{0/0,1/2,0/1,1/1,2/0,2/1,2/2}$  &
\footnotesize{S124} & $\pattern{scale = 0.6}{2}{1/1,2/2}{0/1,0/2,1/0,1/1,2/0,2/1,2/2}\pattern{scale = 0.6}{2}{1/2,2/1}{0/1,0/2,1/0,1/1,2/0,2/1,2/2}$ &
\\[5pt]
\hline	
	\end{tabular}
\end{center} 
}

\vspace{-0.5cm}
 	\caption{Symmetry-induced equidistribution classes of mesh patterns proved via generating functions.}\label{tab:g.f.}
\end{table}

In Table~\ref{tab:g.f.}, pairs of mesh patterns sharing the same equidistribution are grouped within the same frame. It suffices to prove equidistribution for only one pair of patterns in each frame; the remaining pairs are obtained by the inverse, reverse, and complement operations.

\begin{thm}[Pairs~S113--S116]\label{equ:Nr.113}
We have  $\pattern{scale = 0.6}{2}{1/1,2/2}{0/0,0/1,1/0,0/2,1/1,1/2,2/1,2/2}\sim_{jd}\hspace{-1mm}
\pattern{scale = 0.6}{2}{1/2,2/1}{0/0,0/1,1/0,0/2,1/1,1/2,2/1,2/2}$. Moreover, 
\begin{eqnarray}F(x,t) &:=& \sum_{n \ge 1} x^n \sum_{\pi \in S_n(2413,3142)} 
	t^{\hspace{-1mm}\pattern{scale = 0.5}{2}{1/1,2/2}{0/0,0/1,1/0,0/2,1/1,1/2,2/1,2/2}(\pi)}
	= \sum_{n \ge 1} x^n \sum_{\pi \in S_n(2413,3142)} 
	t^{\hspace{-1mm}\pattern{scale = 0.5}{2}{1/2,2/1}{0/0,0/1,1/0,0/2,1/1,1/2,2/1,2/2}(\pi)} \nonumber \\
	&=& \frac{1}{2}\Big( 1 - x - \sqrt{1-6x+x^{2}}  + (t-1)x^2\big(3-x-\sqrt{1-6x+x^{2}}\big) \label{eq:Nr.113}
\Big);
\end{eqnarray}
\begin{eqnarray}F(x,t_1,t_2) &:=& \sum_{n \ge 1} x^n \sum_{\pi \in S_n(2413,3142)} 
	t_1^{\hspace{-1mm}\pattern{scale = 0.5}{2}{1/1,2/2}{0/0,0/1,1/0,0/2,1/1,1/2,2/1,2/2}(\pi)}
	t_2^{\hspace{-1mm}\pattern{scale = 0.5}{2}{1/2,2/1}{0/0,0/1,1/0,0/2,1/1,1/2,2/1,2/2}(\pi)} \nonumber \\
	&=& \frac{1}{2}\Big( 1 - x - \sqrt{1-6x+x^{2}}  + (t_1+t_2-2)x^2\big(3-x-\sqrt{1-6x+x^{2}}\big) \nonumber
\Big).
\end{eqnarray}
\end{thm}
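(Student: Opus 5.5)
The plan is to characterize occurrences directly and then count the permutations that contain them. An occurrence of $p_1=\pattern{scale = 0.5}{2}{1/1,2/2}{0/0,0/1,1/0,0/2,1/1,1/2,2/1,2/2}$ in $\pi$ is a pair $\pi_i<\pi_j$ with $i<j$. Every box except $\boks{2}{0}$ is shaded, and $\boks{2}{0}$ is the region to the right of $\pi_j$ and below $\pi_i$. So every other element of $\pi$ must lie to the right of $\pi_j$ and have value smaller than $\pi_i$.

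Three consequences follow.
\begin{itemize}
\item Since boxes $\boks{0}{k}$ are shaded, nothing lies to the left of $\pi_i$, so $i=1$.
\item Since boxes $\boks{1}{k}$ are shaded, nothing lies between the two points, so $j=2$.
\item Since boxes $\boks{k}{1}$ and $\boks{k}{2}$ are shaded, nothing has value above $\pi_i$ other than $\pi_j$. Hence $\pi_1=n-1$ and $\pi_2=n$.
\end{itemize}
Therefore $\pi$ contains $p_1$ if and only if $\pi=(n-1)\,n\,\sigma$ with $\sigma$ a permutation of $[n-2]$. In that case there is exactly one occurrence.

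The same argument, applied via complement (or directly), shows that $\pi$ contains $p_2=\pattern{scale = 0.5}{2}{1/2,2/1}{0/0,0/1,1/0,0/2,1/1,1/2,2/1,2/2}$ if and only if $\pi=n\,(n-1)\,\sigma$. Again there is exactly one occurrence. The two shapes are incompatible, so no permutation contains both patterns. Thus each statistic takes only the values $0$ and $1$, and the two are never simultaneously $1$.

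Next we count which of these permutations are separable. The permutation $(n-1)\,n\,\sigma$ equals $12\ominus\sigma$ when $\sigma\neq\varepsilon$, and equals $12$ when $n=2$. It is therefore separable exactly when $\sigma$ is separable or empty. This uses the closure of $S_n(2413,3142)$ under $\ominus$ and its closure under taking patterns. The same holds for $n\,(n-1)\,\sigma=21\ominus\sigma$. So each of the two families has g.f.\ $x^2(1+F(x))$, and
\[
F(x,t_1,t_2)=F(x)+(t_1-1)x^2\bigl(1+F(x)\bigr)+(t_2-1)x^2\bigl(1+F(x)\bigr).
\]
This expression is symmetric in $t_1,t_2$, which gives the joint equidistribution $\sim_{jd}$.

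Finally, substitute \eqref{eq:separable-gf}, using $1+F(x)=\tfrac12\bigl(3-x-\sqrt{1-6x+x^2}\bigr)$. This yields the stated formula, and setting $t_2=1$ gives \eqref{eq:Nr.113}. The only delicate step is reading the shading correctly, in particular that the single unshaded box $\boks{2}{0}$ forces the structure $(n-1)\,n\,\sigma$. The remaining steps are routine.
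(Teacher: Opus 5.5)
Your proposal is correct and follows essentially the same route as the paper: the only possible occurrence is $\pi_1\pi_2=(n-1)n$ (resp.\ $n(n-1)$), with $\pi_3\cdots\pi_n$ an arbitrary, possibly empty, separable permutation. This gives $tx^2(F(x)+1)$ for each family and the joint g.f.\ by disjointness.

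The only difference is how you get $\sim_{jd}$. You read it off the symmetry of $F(x,t_1,t_2)$ in $t_1,t_2$, whereas the paper swaps $n$ and $n-1$. That swap must be restricted to permutations beginning with $\{n-1,n\}$, since applied to all separable permutations it can leave the class, e.g.\ $2314\mapsto 2413$.

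One small slip: complement does not send the $12$-pattern to the $21$-pattern here, because it moves the unshaded box $(2,0)$ to $(2,2)$. In fact no symmetry relates this pair, so only your direct argument for the $21$-pattern is valid. This is harmless, since that argument is the same as the one for the $12$-pattern.
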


\begin{proof}
Clearly, if a separable permutation $\pi = \pi_1 \cdots \pi_n$ has an occurrence of the pattern \hspace{-1mm}$\pattern{scale = 0.6}{2}{1/1,2/2}{0/0,0/1,1/0,0/2,1/1,1/2,2/1,2/2}$ (resp., \hspace{-1mm}
$\pattern{scale = 0.6}{2}{1/2,2/1}{0/0,0/1,1/0,0/2,1/1,1/2,2/1,2/2}$), then $\pi_1\pi_2=(n-1)n$ (resp., $\pi_1\pi_2=n(n-1)$) is the only occurrence of the pattern in $\pi$, and $\pi_3\cdots\pi_n$ is any separable permutation in $S_{n-2}$. Therefore, the map that swaps $n$ and $n-1$ in all permutations of length~$n$ establishes the joint equidistribution $\pattern{scale = 0.6}{2}{1/1,2/2}{0/0,0/1,1/0,0/2,1/1,1/2,2/1,2/2}\sim_{jd}\hspace{-1mm}
\pattern{scale = 0.6}{2}{1/2,2/1}{0/0,0/1,1/0,0/2,1/1,1/2,2/1,2/2}$. 

Consider the pattern \hspace{-1mm}$\pattern{scale = 0.6}{2}{1/1,2/2}{0/0,0/1,1/0,0/2,1/1,1/2,2/1,2/2}$, and let $A(x,t)$ be the g.f.\ for permutations with one occurrence of the pattern. Then, from the above, $A(x,t) = t x^2 ( F(x) + 1 )$, and 
\begin{equation} \label{Nr113-expr}
    F(x,t) = A(x,t) + ( F(x) - A(x,1))=(t-1) x^2 ( F(x) + 1 ) + F(x).
\end{equation}
Substituting \eqref{eq:separable-gf} into \eqref{Nr113-expr} yields \eqref{eq:Nr.113}. 

Finally, because both patterns cannot occur in the same permutation, we have
\begin{eqnarray} \label{Nr113-expr-1}
    F(x,t_1,t_2) &=& A(x,t_1) + A(x,t_2) + ( F(x) - 2A(x,1)) \nonumber \\
    &=& (t_1+t_2-2) x^2 ( F(x) + 1 ) + F(x),
\end{eqnarray}
and the formula for $F(x,t_1,t_2)$ is obtained by substituting \eqref{eq:separable-gf} into \eqref{Nr113-expr-1}.
\end{proof}

\begin{thm}[Pairs~S117--S124]\label{equ:Nr.117}
We have  \hspace{-1mm}$\pattern{scale = 0.6}{2}{1/1,2/2}{0/0,0/1,1/0,1/1,1/2,2/0,2/2}
\sim_{jd}\hspace{-1mm}\pattern{scale = 0.6}{2}{1/2,2/1}{0/0,0/1,1/0,1/1,1/2,2/0,2/2}$. Moreover, 
\begin{eqnarray}
	F(x,t) &:=& \sum_{n \ge 1} x^n \sum_{\pi \in S_n(2413,3142)} 
	t^{\hspace{-1mm}\pattern{scale = 0.5}{2}{1/1,2/2}{0/0,0/1,1/0,1/1,1/2,2/0,2/2}(\pi)}
	= \sum_{n \ge 1} x^n \sum_{\pi \in S_n(2413,3142)} 
	t^{\hspace{-1mm}\pattern{scale = 0.5}{2}{1/2,2/1}{0/0,0/1,1/0,1/1,1/2,2/0,2/2}(\pi)} \nonumber \\
	  &=& \frac{1}{4} \Big(
     t x^2(x-3 + \sqrt{1 - 6x + x^2})^2\nonumber\\
  &&\quad   -x^2(x-3 + \sqrt{1 - 6x + x^2})^2
     - 2(x-1 + \sqrt{1 - 6x + x^2})
     \Big);\label{eq:Nr.117}
\end{eqnarray}	
\begin{eqnarray}F(x,t_1,t_2) &:=& \sum_{n \ge 1} x^n \sum_{\pi \in S_n(2413,3142)} 
	t_1^{\hspace{-1mm}\pattern{scale = 0.5}{2}{1/1,2/2}{0/0,0/1,1/0,1/1,1/2,2/0,2/2}(\pi)}
	t_2^{\hspace{-1mm}\pattern{scale = 0.5}{2}{1/2,2/1}{0/0,0/1,1/0,1/1,1/2,2/0,2/2}(\pi)} \nonumber \\
	&=& F(x)+x^2\Big((t_1+t_2)(1-x)+(t_1t_2+1)x-2\Big)(F(x)+1)^2, \label{eq:Nr117-jd}
\end{eqnarray}
where $F(x)$ is given by (\ref{eq:separable-gf}).
\end{thm}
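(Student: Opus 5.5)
The plan is to determine occurrences of both patterns completely, after which the generating functions follow by direct counting. Write $p=\hspace{-1mm}\pattern{scale = 0.6}{2}{1/1,2/2}{0/0,0/1,1/0,1/1,1/2,2/0,2/2}$ and $q=\hspace{-1mm}\pattern{scale = 0.6}{2}{1/2,2/1}{0/0,0/1,1/0,1/1,1/2,2/0,2/2}$. In both, the only unshaded boxes are $(0,2)$ and $(2,1)$.

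First I read off the occurrences of $p$. Let $a<b$ be the two elements of an occurrence. Since the whole middle column is shaded, $a$ and $b$ are adjacent. Because boxes $(0,0),(1,0),(2,0)$ are shaded, no element is smaller than $a$, so $a=1$. Elements larger than $b$ can only lie to the left, and elements with values strictly between $a$ and $b$ can only lie to the right. Hence $\pi$ contains $p$ if and only if
\[
\pi=P\,1\,b\,S \quad\text{with } P>b>S,
\]
where $P$ is the set of values $b+1,\dots,n$ and $S$ is the set of values $2,\dots,b-1$. In particular $p$ occurs at most once. Such a $\pi$ equals $P\ominus\big(1\oplus(1\ominus S)\big)$ up to the obvious reductions, so $\pi$ is separable if and only if $P$ and $S$ are each separable or empty. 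The g.f.\ of permutations containing $p$ is therefore $x^2(F(x)+1)^2$.

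The same reasoning applies to $q$. Write the occurrence as $c>d$, adjacent. The shading of row $0$ forces $d=1$. Elements to the left must exceed $c$, and elements to the right must lie strictly between $1$ and $c$. Hence $\pi=P\,c\,1\,S$ with $P>c>S$, which again gives at most one occurrence and the g.f.\ $x^2(F(x)+1)^2$.

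Next comes the joint case. Both patterns occur exactly when $1$ is preceded by some $c$ and followed by some $b$ such that all conditions hold simultaneously. This means
\[
\pi=P\,c\,1\,b\,S \quad\text{with } P>c>b>S,
\]
and these permutations have g.f.\ $x^3(F(x)+1)^2$. Combining the cases by inclusion--exclusion gives
\[
F(x,t_1,t_2)=F(x)+(t_1+t_2-2)\big(x^2-x^3\big)(F+1)^2+(t_1t_2-1)x^3(F+1)^2 .
\]
This rearranges exactly to \eqref{eq:Nr117-jd}. The expression is symmetric in $t_1,t_2$, which gives $p\sim_{jd}q$.

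Setting $t_2=1$ gives $F(x,t)=F(x)+(t-1)x^2(F(x)+1)^2$. Substituting \eqref{eq:separable-gf} and using $F(x)+1=\tfrac12\big(3-x-\sqrt{1-6x+x^2}\big)$ then yields \eqref{eq:Nr.117}.

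The only delicate step is the occurrence analysis: translating each shaded box into a constraint correctly, especially for $q$, where the value rows are indexed relative to the smaller element $1$. Checking that the joint configuration is exactly $P\,c\,1\,b\,S$ must also be done with care. Once that is settled, the rest is routine algebra.
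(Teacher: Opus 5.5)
Your proposal is correct and follows essentially the same route as the paper's proof. Both characterize the unique possible occurrence as $P\,1\,b\,S$ (resp.\ $P\,c\,1\,S$) with generating function $x^2(F(x)+1)^2$, and both identify the joint configuration $P\,(b+1)\,1\,b\,S$ with generating function $x^3(F(x)+1)^2$. Both then combine the four cases by inclusion--exclusion and read off $\sim_{jd}$ from the symmetry in $t_1,t_2$; the only cosmetic difference is that you obtain the one-variable formula by setting $t_2=1$, whereas the paper derives it first.
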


\begin{proof}
Clearly, if a separable permutation $\pi$ contains an occurrence of the pattern 
$\hspace{-1mm}\pattern{scale = 0.6}{2}{1/1,2/2}{0/0,0/1,1/0,1/1,1/2,2/0,2/2}$ 
(resp., $\hspace{-1mm}\pattern{scale = 0.6}{2}{1/2,2/1}{0/0,0/1,1/0,1/1,1/2,2/0,2/2}$), 
then for some element $m$, we can write $\pi = A1 m B$ (resp., $\pi = A m 1  B$), 
where each element of $A$, if any, is larger than $m$ and each element of $B$, if any, is smaller than $m$. 
Moreover, each mesh pattern can occur at most once in $\pi$, and the reduced forms of $A$ and $B$ can be any separable permutations 
($\pi$ is separable if and only if both $A$ and $B$ are separable).  Therefore, if $A(x,t)$ is the g.f.\ for permutations with exactly one occurrence of either pattern, 
then $$F(x,t) = A(x,t) + ( F(x) - A(x,1)),$$ and 
\begin{equation}\label{A(x,t):Nr117}A(x,t) = t x^2 ( F(x) + 1 )^2,
\end{equation} 
from which, together with (\ref{eq:separable-gf}), we obtain (\ref{eq:Nr.117}) and $\pattern{scale = 0.6}{2}{1/1,2/2}{0/0,0/1,1/0,1/1,1/2,2/0,2/2}
\sim_{d}\hspace{-1mm}\pattern{scale = 0.6}{2}{1/2,2/1}{0/0,0/1,1/0,1/1,1/2,2/0,2/2}$.

To deal with the joint distribution, first observe that the only way both patterns can occur in the same permutation \( \pi \) is if $\pi = A\,(m+1)\,1\,m\,B$,
where every element in \(A\), if any, is larger than \(m+1\), and every element in \(B\), if any, is smaller than \(m\). The g.f.\ for such permutations is clearly $t_1t_2x^3(F(x)+1)^2$. Now, the g.f. for permutations containing $\hspace{-1mm}\pattern{scale = 0.6}{2}{1/1,2/2}{0/0,0/1,1/0,1/1,1/2,2/0,2/2}$ (resp., $\hspace{-1mm}\pattern{scale = 0.6}{2}{1/2,2/1}{0/0,0/1,1/0,1/1,1/2,2/0,2/2}$), but not $\hspace{-1mm}\pattern{scale = 0.6}{2}{1/2,2/1}{0/0,0/1,1/0,1/1,1/2,2/0,2/2}$ (resp., $\hspace{-1mm}\pattern{scale = 0.6}{2}{1/1,2/2}{0/0,0/1,1/0,1/1,1/2,2/0,2/2}$), is $A(x,t_1)-t_1x^3(F(x)+1)^2$ (resp., $A(x,t_2)-t_2x^3(F(x)+1)^2$). Moreover, the g.f.\ for separable permutations avoiding both patterns is
$$F(x)-(2A(x,1)-x^3(F(x)+1)^2).$$
Combining the four generating functions we have that $F(x,t_1,t_2)$ is given by
$$A(x,t_1)+A(x,t_2)-(t_1+t_2)x^3(F(x)+1)^2+t_1t_2x^3(F(x)+1)^2+F(x)-(2A(x,1)-x^3(F(x)+1)^2),$$
and formula (\ref{eq:Nr117-jd}) is obtained by applying (\ref{A(x,t):Nr117}). Since $F(x,t_1,t_2)=F(x,t_2,t_1)$, we have $\pattern{scale = 0.6}{2}{1/1,2/2}{0/0,0/1,1/0,1/1,1/2,2/0,2/2}
\sim_{jd}\hspace{-1mm}\pattern{scale = 0.6}{2}{1/2,2/1}{0/0,0/1,1/0,1/1,1/2,2/0,2/2}$.
\end{proof}

\section{Distribution of mesh pattern of length 2 on separable permutations}

In this section, we determine the distributions of 21 increasing (resp., decreasing) mesh patterns of length~2 on separable permutations. For the pairs of mesh patterns presented in Tables~\ref{tab:sym-1group}--\ref{tab:sym-4group}, we select the first pair of patterns in each class (that is, within each frame) as the representative pair for determining the corresponding distribution, since the equidistribution of all pairs has already been established. In our analysis, we always choose the pattern~$12$ as the representative.

\subsection{“Trivial” distributions}
By a ``trivial'' distribution we mean the situation when either the pattern in question can occur at most once, or the pattern's occurrences can be easily inferred from the shape of the pattern. There are nine classes of pairs of such patterns:
\vspace{0.1cm}
\begin{center}
\begin{tabular}{rrrrr}
S1 = $\pattern{scale = 0.6}{2}{1/1,2/2}
{0/0,0/1,0/2,1/0,1/1,1/2,2/0,2/1,2/2}\pattern{scale = 0.6}{2}{1/2,2/1}{0/0,0/1,0/2,1/0,1/1,1/2,2/0,2/1,2/2}$; & \hspace{-0.3cm}
S2 = $\pattern{scale = 0.6}{2}{1/1,2/2}
{0/0,0/1,0/2,1/0,1/2,2/0,2/1,2/2}\pattern{scale = 0.6}{2}{1/2,2/1}{0/0,0/1,0/2,1/0,1/2,2/0,2/1,2/2}$;&\hspace{-0.3cm}
S9 = $\pattern{scale = 0.6}{2}{1/1,2/2}
{0/0,0/1,0/2,2/0,2/1,2/2}\pattern{scale = 0.6}{2}{1/2,2/1}{0/0,0/1,0/2,2/0,2/1,2/2}$;&\hspace{-0.3cm}
S11 = $\pattern{scale = 0.6}{2}{1/1,2/2}
{0/0,0/2,1/0,1/1,1/2,2/0,2/2}\pattern{scale = 0.6}{2}{1/2,2/1}{0/0,0/2,1/0,1/1,1/2,2/0,2/2}$;\\[5pt]
S17 = $\pattern{scale = 0.6}{2}{1/1,2/2}{0/0,0/1,0/2,1/1,1/2,2/1,2/2,2/0}\pattern{scale = 0.6}{2}{1/2,2/1}{0/0,0/1,0/2,1/1,1/2,2/1,2/2,2/0}$;&\hspace{-0.3cm}
S21 = $\pattern{scale = 0.6}{2}{1/1,2/2}{0/0,0/1,1/0,1/1,1/2,2/1,2/0}\pattern{scale = 0.6}{2}{1/2,2/1}{0/0,0/1,1/0,1/1,1/2,2/1,2/0}$; &\hspace{-0.3cm}
S25 = $\pattern{scale = 0.6}{2}{1/1,2/2}{0/0,0/1,0/2,1/0,2/0,2/1,2/2}\pattern{scale = 0.6}{2}{1/2,2/1}{0/0,0/1,0/2,1/0,2/0,2/1,2/2}$;  &\hspace{-0.3cm}
S29 = $\pattern{scale = 0.6}{2}{1/1,2/2}{0/0,0/1,0/2,1/0,1/1,1/2}\pattern{scale = 0.6}{2}{1/2,2/1}{0/0,0/1,0/2,1/0,1/1,1/2}$;\\[5pt]
S49 = $\pattern{scale = 0.6}{2}{1/1,2/2}{0/0,1/0,1/1,1/2,2/0}\pattern{scale = 0.6}{2}{1/2,2/1}{0/0,1/0,1/1,1/2,2/0}$.
\end{tabular}
\end{center}

\begin{thm}[Pair~S1]\label{equ:Nr.1}
We have 
\begin{eqnarray}\label{eq:Nr.1}
F(x,t) &:=& \sum_{n \ge 1} x^n \sum_{\pi \in S_n(2413,3142)} 
	t^{\hspace{-1mm}\pattern{scale = 0.5}{2}{1/1,2/2}{0/0,0/1,0/2,1/0,1/1,1/2,2/0,2/1,2/2}(\pi)}
	= \sum_{n \ge 1} x^n \sum_{\pi \in S_n(2413,3142)} 
	t^{\hspace{-1mm}\pattern{scale = 0.5}{2}{1/2,2/1}{0/0,0/1,0/2,1/0,1/1,1/2,2/0,2/1,2/2}(\pi)} \nonumber \\
&=& \frac{1}{2} \left( 1 - x + 2(t-1)x^2 - \sqrt{1 - 6x + x^2} \right).
\end{eqnarray}
$$F(x,t_1,t_2) := \sum_{n \ge 1} x^n \sum_{\pi \in S_n(2413,3142)} 
t_1^{\hspace{-1mm}\pattern{scale = 0.5}{2}{1/1,2/2}{0/0,0/1,0/2,1/0,1/1,1/2,2/0,2/1,2/2}(\pi)}
t_2^{\hspace{-1mm}\pattern{scale = 0.5}{2}{1/2,2/1}{0/0,0/1,0/2,1/0,1/1,1/2,2/0,2/1,2/2}(\pi)} =(t_1+t_2-2)x^2+\frac{1 - x - \sqrt{1 - 6x + x^2}}{2}.$$
\end{thm}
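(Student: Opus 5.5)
The pattern S1 has all nine boxes shaded, so an occurrence of $12$ (resp., $21$) in $\pi$ forces $\pi$ to contain no elements other than the two elements of the occurrence. Hence the plan is to show that a permutation $\pi$ contains an occurrence of the fully shaded $12$ (resp., $21$) if and only if $\pi=12$ (resp., $\pi=21$), and in that case there is exactly one occurrence. Both $12$ and $21$ are separable, and for every separable $\pi$ of length $n\neq 2$ the pattern count is $0$. For $n=2$, the permutation $12$ has count $1$ for the increasing pattern and $0$ for the decreasing one, and symmetrically for $21$.

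From this the joint distribution follows directly. Among all separable permutations, every $\pi$ contributes $x^{|\pi|}$ to $F(x,t_1,t_2)$, except that $12$ contributes $t_1x^2$ instead of $x^2$ and $21$ contributes $t_2x^2$ instead of $x^2$. Thus
\[
F(x,t_1,t_2)=F(x)-2x^2+t_1x^2+t_2x^2=(t_1+t_2-2)x^2+F(x),
\]
and substituting \eqref{eq:separable-gf} gives the stated formula. This expression is symmetric in $t_1,t_2$, so the two patterns are jointly equidistributed. Setting $t_2=1$, $t_1=t$ gives $F(x,t)=F(x)+(t-1)x^2$, which after substitution is exactly \eqref{eq:Nr.1}. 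By the symmetry noted for Table~\ref{tab:sym-1group}, the same $F(x,t)$ holds for the decreasing pattern.

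There is no real obstacle here. The only point requiring care is checking that the fully shaded region really excludes every other element, including elements lying on the boundary lines. This holds because every point of the diagram not in the occurrence lies in some box.
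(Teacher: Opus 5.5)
Your proposal is correct and follows the paper's proof: the fully shaded $12$ (resp., $21$) occurs only in the permutation $12$ (resp., $21$), and exactly once there. This gives $F(x,t)=tx^2+F(x)-x^2$ and $F(x,t_1,t_2)=t_1x^2+t_2x^2+F(x)-2x^2$, and substituting \eqref{eq:separable-gf} yields both stated formulas.
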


\begin{proof}
The only separable permutation that contains the pattern \hspace{-1mm}$\pattern{scale = 0.6}{2}{1/1,2/2}{0/0,0/1,0/2,1/0,1/1,1/2,2/0,2/1,2/2}$ is $12$. Hence, 
\begin{equation}\label{Nr1-expr-A}
    F(x,t) = tx^2 + F(x) - x^2 .
\end{equation}
By substituting~\eqref{eq:separable-gf}~into~\eqref{Nr1-expr-A}, we obtain~\eqref{eq:Nr.1}. Also, it is clear that 
$$F(x,t_1,t_2)=t_1x^2+t_2x^2+F(x)-2x^2,$$ which, together with \eqref{eq:separable-gf}, leads to the desired result.
\end{proof}

%The following theorem establishes the distribution of the pair S2, $\pattern{scale = 0.6}{2}{1/2,2/1}{0/0,0/1,0/2,1/0,1/2,2/0,2/1,2/2}$ and$\pattern{scale = 0.6}{2}{1/1,2/2}{0/0,0/1,0/2,1/0,1/2,2/0,2/1,2/2}$.

\begin{thm}[Pair~S2]\label{equ:Nr.2}
Theorem~\ref{equ:Nr.113} also holds for the mesh patterns  \hspace{-1mm}$\pattern{scale = 0.6}{2}{1/1,2/2}{0/0,0/1,0/2,1/0,1/2,2/0,2/1,2/2}$ and \hspace{-1mm}$\pattern{scale = 0.6}{2}{1/2,2/1}{0/0,0/1,0/2,1/0,1/2,2/0,2/1,2/2}$.
\end{thm}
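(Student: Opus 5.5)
The plan is to show that the pattern $\pattern{scale = 0.6}{2}{1/1,2/2}{0/0,0/1,0/2,1/0,1/2,2/0,2/1,2/2}$ (resp., $\pattern{scale = 0.6}{2}{1/2,2/1}{0/0,0/1,0/2,1/0,1/2,2/0,2/1,2/2}$) has exactly the same occurrence structure as the pattern in Theorem~\ref{equ:Nr.113}. Then the two generating-function identities in the proof of Theorem~\ref{equ:Nr.113} transfer verbatim. Concretely, the goal is to reach the same expression $A(x,t)=tx^2(F(x)+1)$ for the g.f.\ of permutations with an occurrence. We also need the same two facts: each pattern occurs at most once, and the two patterns never occur together.

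First, I characterize the occurrences. Let $\pi_i\pi_j$ with $i<j$ be an occurrence of $12$ in $\pi\in S_n(2413,3142)$. The only unshaded box is the middle one $(1,1)$.
\begin{itemize}
\item Since columns $0$ and $2$ are fully shaded, no element lies to the left of $\pi_i$ or to the right of $\pi_j$. Hence $i=1$ and $j=n$.
\item Since boxes $(1,0)$ and $(1,2)$ are shaded, every element strictly between positions $1$ and $n$ has value in $(\pi_1,\pi_n)$.
\item Together these force $\pi_1=1$ and $\pi_n=n$.
\end{itemize}
Thus $\pi$ contains the pattern if and only if $n\ge 2$ and $\pi=1\oplus\sigma\oplus 1$ with $\sigma$ empty or separable. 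Separability of $\pi$ is equivalent to separability of $\sigma$, since $\oplus$ preserves separability and $\sigma$ is an interval of $\pi$. In that case the occurrence is unique. Symmetrically (by complement), $\pi$ contains the $21$ pattern exactly when $\pi=1\ominus\sigma\ominus 1$, again with a unique occurrence.

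Second, I assemble the generating functions. The g.f.\ for permutations with an occurrence is $A(x,t)=tx^2(F(x)+1)$, which gives
\[F(x,t)=A(x,t)+F(x)-A(x,1)=(t-1)x^2(F(x)+1)+F(x).\]
This is exactly \eqref{Nr113-expr}, and so it yields \eqref{eq:Nr.113}. The patterns cannot co-occur, because that would need $\pi_1=1$ and $\pi_1=n$ with $n\ge2$. So the computation \eqref{Nr113-expr-1} gives the stated $F(x,t_1,t_2)$.

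For the joint equidistribution, I use the involution that swaps the values $1$ and $n$ in $\pi_1$ and $\pi_n$ whenever $\{\pi_1,\pi_n\}=\{1,n\}$, and fixes all other permutations. This involution maps $1\oplus\sigma\oplus1$ to $1\ominus\sigma\ominus1$ and back, and it preserves separability. It therefore exchanges the two occurrence counts. Alternatively, joint equidistribution follows directly because $F(x,t_1,t_2)$ is symmetric in $t_1,t_2$. The only step needing care is the occurrence characterization, in particular confirming that the middle elements are otherwise unrestricted; the remainder is the substitution already done in Theorem~\ref{equ:Nr.113}.
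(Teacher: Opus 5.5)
Your proposal is correct and follows essentially the same route as the paper. You show that an occurrence forces $\pi_1=1$ and $\pi_n=n$ with an arbitrary (possibly empty) separable middle, so the occurrence is unique. You then obtain $A(x,t)=tx^2(F(x)+1)$ and reuse the relations from the proof of Theorem~\ref{equ:Nr.113}. Your explicit involution for the joint equidistribution (or simply the complement, since the shading is symmetric) spells out a step the paper leaves implicit.
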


\begin{proof}
The pattern~$\pattern{scale = 0.6}{2}{1/1,2/2}{0/0,0/1,0/2,1/0,1/2,2/0,2/1,2/2}$ can occur at most once in any permutation $\pi = \pi_1 \cdots \pi_n$. In this case, we must have $\pi_1 = 1$ and $\pi_n = n$ and $\red(\pi_2\cdots\pi_{n-1})$ can be any separable permutation of length $n-2$. If $A(x,t)$ is the g.f.\ of separable permutations with one occurrence of the pattern, then all formulas and functional relations in the proof of Theorem~\ref{equ:Nr.113} are exactly the same, leading to the desired result.
\end{proof}

The following lemma is an immediate corollary of the fact that separable permutations are closed under reversal and under the complement.

\begin{lemma}\label{half-first-last}
For every \(n\ge 2\), exactly half of the separable permutations of length \(n\) have their first element larger than their last element. Likewise, exactly half begin with an ascent, and half begin with a descent.
\end{lemma}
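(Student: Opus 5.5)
The plan is to produce explicit fixed-point-free involutions on $S_n(2413,3142)$, for $n\ge 2$, that interchange the two classes in each claim. Each involution will be one of the symmetry operations under which, as noted in the introduction, separable permutations are closed.

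For the first claim, I use the complement $\pi\mapsto\pi^c$, which maps $S_n(2413,3142)$ bijectively onto itself.

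1. Since $c(x)=n+1-x$ is order-reversing, $\pi_1>\pi_n$ holds if and only if $c(\pi_1)<c(\pi_n)$.
2. Hence complement maps the set of separable permutations with first element larger than last bijectively onto the set with first element smaller than last.
3. For $n\ge 2$ these two sets partition $S_n(2413,3142)$, because $\pi_1\ne\pi_n$.
4. So each set has exactly half the elements.

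The reverse $\pi\mapsto\pi^r$ works equally well here, since it swaps $\pi_1$ and $\pi_n$ and so exchanges the two sets.

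For the second claim, I use complement again. Because complement reverses order, $\pi_1<\pi_2$ if and only if $c(\pi_1)>c(\pi_2)$. Thus complement exchanges separable permutations beginning with an ascent and those beginning with a descent. For $n\ge 2$ every permutation begins with exactly one of these, so each class has size half of $|S_n(2413,3142)|$.

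Both claims rest only on closure of $S_n(2413,3142)$ under complement and reverse. This closure is either stated in the paper or follows directly, since $\{2413,3142\}$ is closed under both operations. I do not expect any obstacle. The only care needed is to restrict to $n\ge 2$, so that the first and last elements are distinct and the first two positions exist.
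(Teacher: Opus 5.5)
Your proposal is correct and matches the paper's approach. The paper states the lemma as an immediate corollary of the closure of separable permutations under reversal and complement, and you make that explicit with fixed-point-free involutions: complement (or reverse) for the first-versus-last comparison, and complement for the ascent-versus-descent split at the start.
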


\begin{thm}[Pairs~S9, S10, S29--S32]\label{equ:Nr.9}
We have
\begin{eqnarray}
F(x,t) &:=& \sum_{n \ge 1} x^n \sum_{\pi \in S_n(2413,3142)} 
	t^{\hspace{-1mm}\pattern{scale = 0.5}{2}{1/1,2/2}{0/0,0/1,0/2,2/0,2/1,2/2}(\pi)}
	= \sum_{n \ge 1} x^n \sum_{\pi \in S_n(2413,3142)} 
	t^{\hspace{-1mm}\pattern{scale = 0.5}{2}{1/2,2/1}{0/0,0/1,0/2,2/0,2/1,2/2}(\pi)} \nonumber \\
	&=& \sum_{n \ge 1} x^n \sum_{\pi \in S_n(2413,3142)} 
	t^{\hspace{-1mm}\pattern{scale = 0.5}{2}{1/1,2/2}
    {0/0,0/1,0/2,1/0,1/1,1/2}(\pi)}
	= \sum_{n \ge 1} x^n \sum_{\pi \in S_n(2413,3142)} 
	t^{\hspace{-1mm}\pattern{scale = 0.5}{2}{1/2,2/1}
   {0/0,0/1,0/2,1/0,1/1,1/2}(\pi)} \nonumber \\
	&=&  x + \frac{1}{4}(1 + t)\left(1-3x - \sqrt{1 - 6x + x^2}\right).\label{eq:Nr.9}
	\end{eqnarray}
\begin{eqnarray}
F(x,t_1,t_2) &:=& \sum_{n \ge 1} x^n \sum_{\pi \in S_n(2413,3142)} 
t_1^{\hspace{-1mm}\pattern{scale = 0.5}{2}{1/1,2/2}{0/0,0/1,0/2,2/0,2/1,2/2}(\pi)}
t_2^{\hspace{-1mm}\pattern{scale = 0.5}{2}{1/2,2/1}{0/0,0/1,0/2,2/0,2/1,2/2}(\pi)} = \sum_{n \ge 1} x^n \sum_{\pi \in S_n(2413,3142)} 
t_1^{\hspace{-1mm}\pattern{scale = 0.5}{2}{1/1,2/2}{0/0,0/1,0/2,1/0,1/1,1/2}(\pi)}
t_2^{\hspace{-1mm}\pattern{scale = 0.5}{2}{1/2,2/1}{0/0,0/1,0/2,1/0,1/1,1/2}(\pi)} \nonumber\\
&=& x + \frac{1}{4}(t_1 + t_2)\left(1-3x - \sqrt{1 - 6x + x^2}\right). \nonumber
\end{eqnarray}
\end{thm}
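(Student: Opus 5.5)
The plan is to show that each of the four patterns counted in the statement occurs at most once, and to pin down exactly when it occurs. The formulas then follow from Lemma~\ref{half-first-last} and \eqref{eq:separable-gf}.

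First consider \hspace{-1mm}$\pattern{scale = 0.6}{2}{1/1,2/2}{0/0,0/1,0/2,2/0,2/1,2/2}$, where columns $0$ and $2$ are fully shaded. An occurrence $\pi_i\pi_j$ with $i<j$ must have no element of $\pi$ to the left of position $i$ and none to the right of position $j$. Hence $i=1$ and $j=n$. The middle column is unshaded, so this is the only constraint besides the order condition. Therefore, for $n\ge 2$, the pattern occurs exactly once if $\pi_1<\pi_n$ and not at all otherwise. Likewise, \hspace{-1mm}$\pattern{scale = 0.6}{2}{1/2,2/1}{0/0,0/1,0/2,2/0,2/1,2/2}$ occurs exactly once if $\pi_1>\pi_n$ and otherwise does not occur.

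Next consider \hspace{-1mm}$\pattern{scale = 0.6}{2}{1/1,2/2}{0/0,0/1,0/2,1/0,1/1,1/2}$, where columns $0$ and $1$ are fully shaded. An occurrence forces $i=1$ and $j=2$. So the pattern occurs exactly once if $\pi$ begins with an ascent, and not at all otherwise. The $21$ version occurs exactly once if $\pi$ begins with a descent.

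Now assemble the generating functions. The permutation $1$ contributes $x$ and contains none of these patterns. For each $n\ge2$, exactly one of the two patterns in each pair occurs, and it occurs exactly once. By Lemma~\ref{half-first-last}, exactly half of the separable permutations of length $n$ fall in each case. This gives
\[
F(x,t_1,t_2)=x+\frac{t_1+t_2}{2}\,\bigl(F(x)-x\bigr).
\]
Substituting \eqref{eq:separable-gf} yields $\frac{F(x)-x}{2}=\frac14\bigl(1-3x-\sqrt{1-6x+x^2}\bigr)$, which is the joint formula. Setting $t_2=1$ (or $t_1=1$) gives \eqref{eq:Nr.9}, and it also shows that all four single-pattern distributions coincide.

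The remaining pairs S10 and S30--S32 are images of S9 and S29 under the symmetries listed in Tables~\ref{tab:sym-2group} and~\ref{tab:sym-4group}, so they need no separate argument. The only point needing care is the claim that the shading forces the occurrence positions. It relies on the fact that a fully shaded column forbids any element of $\pi$ in that vertical strip, whatever its value. Beyond that, nothing is expected to be an obstacle.
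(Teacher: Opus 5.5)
Your proposal is correct and follows essentially the same argument as the paper. Each pattern occurs at most once: exactly when $\pi_1<\pi_n$ (resp.\ $\pi_1>\pi_n$), or when $\pi$ begins with an ascent (resp.\ descent). Lemma~\ref{half-first-last} then splits the separable permutations of each length $n\ge 2$ evenly, giving $F(x,t_1,t_2)=x+\frac{t_1+t_2}{2}\bigl(F(x)-x\bigr)$. The only cosmetic difference is the order: you derive the joint formula first and specialize it, whereas the paper first obtains $F(x,t)$ via \eqref{Nr113-expr} and then writes down the joint formula.
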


\begin{proof}
The pattern \(\hspace{-1mm}\pattern{scale = 0.6}{2}{1/1,2/2}{0/0,0/1,0/2,2/0,2/1,2/2}\) can occur at most once in a permutation, in which case the first element is smaller than the last one. If \(A(x,t)\) denotes the generating function for separable permutations of length at least \(2\) with exactly one occurrence of the pattern, then, by Lemma~\ref{half-first-last}, we have
\begin{equation}\label{Nr9-expr-A}
   A(x,t) = t\,\frac{F(x) - x}{2}.
\end{equation}
Substituting~\eqref{Nr9-expr-A} into~\eqref{Nr113-expr} (which also holds for the pattern \(\hspace{-1mm}\pattern{scale = 0.6}{2}{1/1,2/2}{0/0,0/1,0/2,2/0,2/1,2/2}\)), we obtain
\begin{equation}\label{Nr9-expr-B}
    F(x,t) = \frac{(t+1)(F(x) - x)}{2} + x.
\end{equation}
Furthermore, substituting~\eqref{eq:separable-gf} into~\eqref{Nr9-expr-B} yields~\eqref{eq:Nr.9}. The formula for the joint distribution follows from the easy to see equation $F(x,t_1,t_2)=x+A(x,t_1)+A(x,t_2)$.

Finally, the arguments for the pattern
\(\hspace{-1mm}\pattern{scale = 0.6}{2}{1/1,2/2}{0/0,0/1,0/2,1/0,1/1,1/2}\)
are essentially identical, with the only difference being that an occurrence of this pattern must be formed by the leftmost two elements, which form an ascent.
\end{proof}

Interestingly, although the distribution of the mesh pattern
\(\hspace{-1mm}\pattern{scale = 0.6}{2}{1/1,2/2}{0/0,0/2,1/0,1/1,1/2,2/0,2/2}\)
in the next theorem is the same as that of the mesh pattern
\(\hspace{-1mm}\pattern{scale = 0.6}{2}{1/1,2/2}{0/0,0/1,1/0,1/1,1/2,2/0,2/2}\),
the joint distributions of these patterns within their respective pairs are different.
This is because, in one case, both patterns in the pair can occur in the same permutation,
whereas in the other case this is not possible.

\begin{thm}[Pairs~S11 and S12]\label{equ:Nr.11}
We have  $\hspace{-1mm}\pattern{scale = 0.6}{2}{1/1,2/2}
		{0/0,0/2,1/0,1/1,1/2,2/0,2/2}
\sim_{jd}\hspace{-1mm}\pattern{scale = 0.6}{2}{1/2,2/1}
		{0/0,0/2,1/0,1/1,1/2,2/0,2/2}$. Moreover, 
\begin{eqnarray}
	F(x,t) &:=& \sum_{n \ge 1} x^n \sum_{\pi \in S_n(2413,3142)} 
	t^{\hspace{-1mm}\pattern{scale = 0.5}{2}{1/1,2/2}{0/0,0/2,1/0,1/1,1/2,2/0,2/2}(\pi)}
	= \sum_{n \ge 1} x^n \sum_{\pi \in S_n(2413,3142)} 
	t^{\hspace{-1mm}\pattern{scale = 0.5}{2}{1/2,2/1}{0/0,0/2,1/0,1/1,1/2,2/0,2/2}(\pi)} \nonumber \\
	  &=& \frac{1}{4} \Big(t x^2(x-3 + \sqrt{1 - 6x + x^2})^2\nonumber\\
  &&\quad   -x^2(x-3 + \sqrt{1 - 6x + x^2})^2- 2(x-1 + \sqrt{1 - 6x + x^2}) \Big);\label{eq:Nr.11}
\end{eqnarray}	
\begin{eqnarray}F(x,t_1,t_2) &:=& \sum_{n \ge 1} x^n \sum_{\pi \in S_n(2413,3142)} 
t_1^{\hspace{-1mm}\pattern{scale = 0.5}{2}{1/1,2/2}{0/0,0/2,1/0,1/1,1/2,2/0,2/2}(\pi)}
t_2^{\hspace{-1mm}\pattern{scale = 0.5}{2}{1/2,2/1}{0/0,0/2,1/0,1/1,1/2,2/0,2/2}(\pi)} \nonumber \\
&=& (t_1+t_2-2)x^2(F(x)+1)^2+F(x), \label{eq:Nr117-jd1}
\end{eqnarray}
where $F(x)$ is given by (\ref{eq:separable-gf}).
\end{thm}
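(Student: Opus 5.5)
The plan is to show that each pattern occurs at most once and to describe exactly which separable permutations contain it; the generating functions then follow as in the proofs of Theorems~\ref{equ:Nr.113} and~\ref{equ:Nr.117}. For the joint equidistribution I would just use the complement. The shading of S11 is invariant under complement, and complement maps $12$ to $21$. So complement swaps the numbers of occurrences of the two patterns, and it preserves separability.

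\textbf{Structure of an occurrence.} Let $ab$ be an occurrence of the $12$-pattern, with $a$ at position $i$ and $b$ at position $j$. The whole middle column is shaded, so $j=i+1$. The four corner boxes are shaded too, so no other element lies below $a$ or above $b$. Hence $a=1$, $b=n$, and $\pi=A\,1\,n\,B$, where every element of $A$ and of $B$ lies between $1$ and $n$. In particular the pattern occurs at most once.

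Not every such $A,B$ gives a separable permutation, so separability needs an extra condition. Take $\alpha\in A$ and $\beta\in B$. If $\alpha>\beta$, then $\alpha\,1\,n\,\beta$ is an occurrence of $3142$. Hence $A<B$ elementwise, and then
\[
\pi=(\red(A)\ominus 1)\oplus(1\ominus \red(B)).
\]
This is separable exactly when the reduced forms of $A$ and $B$ are each separable or empty. The g.f.\ for permutations with one occurrence is therefore $A(x,t)=tx^2(F(x)+1)^2$, the same as~\eqref{A(x,t):Nr117}.

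\textbf{Generating functions.} As in the proof of Theorem~\ref{equ:Nr.117}, $F(x,t)=A(x,t)+F(x)-A(x,1)$, and substituting~\eqref{eq:separable-gf} gives~\eqref{eq:Nr.11}. The analogous analysis for $21$ gives $\pi=A\,n\,1\,B$; here avoiding $2413$ forces $A>B$, and the count is the same, as complement already guarantees.

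For the joint distribution, the key observation is that both patterns can never occur in the same permutation. Each pattern requires $1$ and $n$ to be adjacent, one with $1$ immediately before $n$ and the other with $n$ immediately before $1$, and these cannot both hold. So
\[
F(x,t_1,t_2)=A(x,t_1)+A(x,t_2)+F(x)-2A(x,1),
\]
which is~\eqref{eq:Nr117-jd1}. This exclusivity is exactly what makes the joint distribution differ from that in Theorem~\ref{equ:Nr.117}.

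The main obstacle is the separability step. One has to check both that $3142$ forces $A<B$, and that conversely every choice of separable $A<B$ produces a separable permutation; the explicit $\oplus/\ominus$ decomposition displayed above settles the converse.
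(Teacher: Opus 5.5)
Your proof is correct and follows the paper's argument. Occurrences are forced to be the adjacent pair $1n$ (resp.\ $n1$), and avoiding $3142$ (resp.\ $2413$) forces $A<B$ (resp.\ $A>B$), which gives $A(x,t)=tx^2(F(x)+1)^2$; the two patterns can never occur together, which yields the joint formula. Your explicit $\oplus/\ominus$ decomposition for the converse, and your use of complement for $\sim_{jd}$, spell out what the paper leaves implicit (there $\sim_{jd}$ follows from the symmetry of $F(x,t_1,t_2)$ in $t_1,t_2$).
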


\begin{proof}
An occurrence of the pattern \hspace{-1mm}$\pattern{scale = 0.6}{2}{1/1,2/2}{0/0,0/2,1/0,1/1,1/2,2/0,2/2}$ in a permutation $\pi=\pi_1\cdots\pi_n$ can only be formed by the smallest and largest elements. In this case, $\pi = A1nB$, where every element in $A$, if any, is smaller than every element in $B$, if any, in order to avoid the pattern 3142.
There cannot be more than one occurrence of the pattern. If $A(x,t)$ is the g.f.\ for the number of separable permutations with one occurrence of the pattern, then clearly $A(x,t)$ satisfies~\eqref{A(x,t):Nr117} (there are $F(x)+1$ ways to generate $A$, and independently the same number of ways to generate $B$). Then,~\eqref{eq:Nr.11} can be obtained using the same arguments as those in the proof of Theorem~\ref{equ:Nr.117}.

To complete the proof, we observe that 
$$   F(x,t_1,t_2) = A(x,t_1) + A(x,t_2) + ( F(x) - 2A(x,1)) = (t_1+t_2-2) x^2 ( F(x) + 1 )^2 + F(x). $$
\end{proof}

\begin{thm}[Pairs~S17--S20]\label{equ:Nr.17}
The formulas in Theorem~\ref{equ:Nr.113} also hold for pairs S17--S20.
\end{thm}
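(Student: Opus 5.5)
The plan is to reduce S17 to the situation of Theorem~\ref{equ:Nr.113}. I will show that each pattern of the pair occurs at most once, that the permutations containing it have g.f.\ $tx^2(F(x)+1)$, and that the two patterns never occur together. Since the pairs S17--S20 lie in one frame of Table~\ref{tab:sym-4group}, it suffices to treat S17. Its first pattern is $12$ with every box shaded except $(1,0)$.

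First I will read off the structure of an occurrence $\pi_i\pi_j$ of $\pattern{scale = 0.6}{2}{1/1,2/2}{0/0,0/1,0/2,1/1,1/2,2/1,2/2,2/0}$ in $\pi\in S_n(2413,3142)$. Because column $0$ is fully shaded, $i=1$; because column $2$ is fully shaded, $j=n$. Row $2$ is fully shaded, so nothing exceeds $\pi_n$, which gives $\pi_n=n$. Row $1$ is fully shaded, so no value lies strictly between $\pi_1$ and $\pi_n$, which gives $\pi_1=n-1$. The only unshaded box is $(1,0)$, so every middle element is smaller than $n-1$, and this is automatic. Hence an occurrence exists if and only if $\pi=(n-1)\,\sigma\,n$, and it is unique.

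Next I will check that $(n-1)\sigma n$ is separable if and only if $\red(\sigma)$ is separable or empty, using $(n-1)\sigma n=(1\ominus\sigma)\oplus 1$ together with closure of the class under taking patterns. This gives $A(x,t)=tx^2(F(x)+1)$. Symmetrically, the pattern $21$ with the same shading occurs, at most once, exactly when $\pi=n\,\sigma\,(n-1)=1\ominus(\sigma\oplus 1)$, so it has the same g.f.\ $A(x,t)$. The two patterns cannot occur in the same permutation, since one forces $\pi_1=n-1$ and the other forces $\pi_1=n$. Alternatively, the bijection that swaps $n$ and $n-1$ exchanges the two patterns and gives joint equidistribution directly.

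The formulas then follow from the same relations as in the proof of Theorem~\ref{equ:Nr.113}: $F(x,t)=A(x,t)+F(x)-A(x,1)$ and $F(x,t_1,t_2)=A(x,t_1)+A(x,t_2)+F(x)-2A(x,1)$, combined with \eqref{eq:separable-gf}. The main point requiring care is the shading analysis: each of the eight shaded boxes must be translated correctly, and the single unshaded box $(1,0)$ must impose no additional restriction. After that, the separability equivalence is a one-line check.
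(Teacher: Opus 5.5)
Your proposal is correct and follows essentially the same route as the paper. The paper also shows that the pattern occurs at most once, exactly when $\pi=(n-1)\,\sigma\,n$ with $\red(\sigma)$ separable or empty, so that $A(x,t)=tx^2(F(x)+1)$, and then reuses the relations from the proof of Theorem~\ref{equ:Nr.113}. Your version spells out what the paper leaves implicit: the box-by-box shading analysis, the separability check via $(1\ominus\sigma)\oplus 1$, and the fact that the two patterns of the pair never co-occur, which the joint formula needs.
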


\begin{proof}
It is sufficient to consider the pair S17. The pattern~\hspace{-1mm}$\pattern{scale = 0.6}{2}{1/1,2/2}{0/0,0/1,0/2,1/1,1/2,2/1,2/2,2/0}$ can occur at most once in any permutation $\pi = \pi_1 \cdots \pi_n$. In this case, we must have $\pi_1 = n-1$ and $\pi_n = n$ and $\red(\pi_2\cdots\pi_{n-1})$ can be any separable permutation of length $n-2$. If $A(x,t)$ is the g.f.\ of separable permutations with one occurrence of the pattern, then all formulas and functional relations in the proof of Theorem~\ref{equ:Nr.113} are exactly the same, giving the desired result.
\end{proof}

\begin{thm}[Pairs~S21--S28]\label{equ:Nr.21}
We have
\begin{eqnarray}
F(x,t) &=& \sum_{n \ge 1} x^n \sum_{\pi \in S_n(2413,3142)} 
	t^{\hspace{-1mm}\pattern{scale = 0.5}{2}{1/1,2/2}{0/0,0/1,1/0,1/1,1/2,2/1,2/0}(\pi)}
	= \sum_{n \ge 1} x^n \sum_{\pi \in S_n(2413,3142)} 
	t^{\hspace{-1mm}\pattern{scale = 0.5}{2}{1/2,2/1}{0/0,0/1,1/0,1/1,1/2,2/1,2/0}(\pi)}\nonumber\\
	&=& \sum_{n \ge 1} x^n \sum_{\pi \in S_n(2413,3142)} 
	t^{\hspace{-1mm}\pattern{scale = 0.5}{2}{1/1,2/2}{0/0,0/1,0/2,1/0,2/0,2/1,2/2}(\pi)}
	= \sum_{n \ge 1} x^n \sum_{\pi \in S_n(2413,3142)} 
	t^{\hspace{-1mm}\pattern{scale = 0.5}{2}{1/2,2/1}{0/0,0/1,0/2,1/0,2/0,2/1,2/2}(\pi)}\nonumber\\
	&=& \frac{1}{2} \left( 1 + (t-1)x \right) \left( 1-x- \sqrt{1 - 6x + x^2} \right).\label{eq-:Nr.21}
	\end{eqnarray}
	\begin{eqnarray}
	F(x,t_1,t_2) &:=& \sum_{n \ge 1} x^n \sum_{\pi \in S_n(2413,3142)} 
t_1^{\hspace{-1mm}\pattern{scale = 0.5}{2}{1/1,2/2}{0/0,0/1,1/0,1/1,1/2,2/0,2/1}(\pi)}
t_2^{\hspace{-1mm}\pattern{scale = 0.5}{2}{1/2,2/1}{0/0,0/1,1/0,1/1,1/2,2/0,2/1}(\pi)} = \sum_{n \ge 1} x^n \sum_{\pi \in S_n(2413,3142)} 
t_1^{\hspace{-1mm}\pattern{scale = 0.5}{2}{1/1,2/2}{0/0,0/1,0/2,1/0,2/0,2/1,2/2}(\pi)}
t_2^{\hspace{-1mm}\pattern{scale = 0.5}{2}{1/2,2/1}{0/0,0/1,0/2,1/0,2/0,2/1,2/2}(\pi)} \nonumber\\
&=& \frac{1}{2} \left( 1 + (t_1+t_2-2)x \right) \left( 1-x- \sqrt{1 - 6x + x^2} \right).\nonumber
\end{eqnarray}
\end{thm}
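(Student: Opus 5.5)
The plan is to show that each pattern in the statement occurs at most once in any permutation, and to count the permutations that contain it by removing one element. For S21 this removal is a deflation of an adjacent $12$ to a single point; for S25 it is deleting a fixed extreme entry. Then
$$F(x,t)=F(x)+(t-1)C(x),$$
where $C(x)$ is the g.f.\ of separable permutations containing the pattern. We aim for $C(x)=xF(x)$, which gives \eqref{eq-:Nr.21}. The equidistribution of the four patterns in each frame, and of S21 with S25 once both $C$'s are computed, comes from the symmetry discussion and from the formulas coinciding.

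\textbf{Step 1: S21.} Let $a<b$ be an occurrence of the $12$-pattern, at positions $i<j$. Shading the column boxes $(1,0),(1,1),(1,2)$ forces $j=i+1$. Shading $(0,0),(0,1),(2,0),(2,1)$ says that no element other than $a$ lies below $b$. Hence $a=1$ and $b=2$, so an occurrence means exactly that $\pi$ contains the factor $12$; in particular there is at most one occurrence. Replacing the factor $12$ by a single point is a bijection. Its inverse is inflating a point by $12$, i.e.\ substituting $12=1\oplus1$. It maps separable permutations of length $n\ge 2$ containing the factor $12$ onto separable permutations of length $n-1$ that have a point of value $1$; every separable permutation has such a point. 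The key check is that separability is preserved in both directions. Inflating by $1\oplus 1$ keeps us in the $\oplus/\ominus$-closed class, and deflation yields a subpermutation, hence still avoids $2413$ and $3142$. So $C(x)=xF(x)$.

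\textbf{Step 2: S25.} The occurrence must use the first and last positions, because columns $0$ and $2$ are fully shaded. Shading $(1,0)$ forces nothing between them to lie below $a=\pi_1$, so $\pi_1=1$. Then $\pi_n>1$ is automatic, and boxes $(1,1),(1,2)$ are unrestricted. Thus $\pi$ contains the pattern iff $n\ge2$ and $\pi_1=1$, and deleting this $1$ gives any separable permutation of length $n-1$. Again $C(x)=xF(x)$.

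\textbf{Step 3: the joint distribution.} For S21 the partner $21$-pattern, by the same analysis (complement/reverse), occurs iff $\pi$ contains the factor $21$. This is incompatible with the factor $12$, since $1$ and $2$ would have to be adjacent in both orders. For S25 the partner, by reversal, occurs iff $\pi_n=1$ with $n\ge2$, which is incompatible with $\pi_1=1$. Hence
$$F(x,t_1,t_2)=F(x)+(t_1-1)xF(x)+(t_2-1)xF(x),$$
which is the stated formula.

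The main obstacle is only the careful reading of the shaded boxes. The one genuinely structural point is that deflating or inflating the adjacent $12$ preserves separability, and the $\oplus$-closure argument above settles it.
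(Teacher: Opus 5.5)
Your proposal is correct and follows essentially the same route as the paper. Both proofs identify the unique possible occurrence, namely the factor $12$ formed by the elements $1$ and $2$ for S21, and $\pi_1=1$ together with $\pi_n$ for S25. Both then delete one element to get $C(x)=xF(x)$, and obtain the joint formula $F(x)+(t_1-1)xF(x)+(t_2-1)xF(x)$ from the fact that the two patterns in each pair cannot occur in the same permutation. Your inflation/deflation argument just makes explicit the paper's remark that ``clearly $\pi$ is separable iff $\pi'$ is.''
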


\begin{proof}
If the mesh pattern \(\hspace{-1mm}\pattern{scale = 0.6}{2}{1/1,2/2}{0/0,0/1,1/0,1/1,1/2,2/1,2/0}\) occurs in a permutation \(\pi\), then the occurrence must be unique and formed by the elements \(1\) and \(2\) placed consequently in increasing order. Let \(\pi'\) be the permutation obtained from \(\pi\) by removing the element \(2\) and decreasing each element larger than \(1\) by \(1\). Clearly, \(\pi\) is separable if and only if \(\pi'\) is separable. Hence, if the g.f.\ for the number of separable permutations with one occurrence of the pattern is \(A(x,t)\), then
\begin{equation}\label{Nr-21-expr-A}
    A(x,t) = t x F(x),
\end{equation}
where \(F(x)\) counts the choices for \(\pi'\) and \(x\) corresponds to the element \(2\). Substituting~\eqref{Nr-21-expr-A} into~\eqref{Nr113-expr} (which also holds for the pattern \(\hspace{-1mm}\pattern{scale = 0.6}{2}{1/1,2/2}{0/0,0/1,1/0,1/1,1/2,2/1,2/0}\)), we obtain
\begin{equation}\label{Nr-17-expr-B}
    F(x,t) = (t-1)x F(x) + F(x).
\end{equation}
Furthermore, substituting~\eqref{eq:separable-gf} into~\eqref{Nr-17-expr-B} yields~\eqref{eq-:Nr.21}. Finally, the formula for the joint distribution follows from 
$$F(x,t_1,t_2)=A(x,t_1)+A(x,t_2)+(F(x)-2A(x,1)).$$
On the other hand, if the mesh pattern 
\(\hspace{-1mm}\pattern{scale = 0.6}{2}{1/1,2/2}{0/0,0/1,0/2,1/0,2/0,2/1,2/2}\)
occurs in a permutation \(\pi=\pi_1\cdots\pi_n\), \(n\ge 2\), then the occurrence must be unique and formed by the elements \(\pi_1=1\) and \(\pi_n\).
Let \(\pi''\) be the permutation obtained from \(\pi\) by removing the element \(1\) and decreasing every other element by \(1\).
Clearly, \(\pi\) is separable if and only if \(\pi''\) is separable.
Hence, the g.f.\ for the number of separable permutations with one occurrence of the pattern is exactly \(A(x,t)\) satisfying~\eqref{Nr-21-expr-A}, and all other formulas derived by us for the pattern
\(\hspace{-1mm}\pattern{scale = 0.6}{2}{1/1,2/2}{0/0,0/1,1/0,1/1,1/2,2/1,2/0}\)
remain exactly the same, completing our proof.
\end{proof}

        \begin{figure}[htbp]
          \centering
        
          % ===== (a) =====
          \begin{subfigure}{0.47\textwidth}
            \centering
            \begin{tikzpicture}[line width=0.5pt,scale=0.24,baseline=(bb.center)]
              \path[use as bounding box] (0,-12) rectangle (25,1);
              \coordinate (bb) at (13,-4.5); 
              \coordinate (O) at (0,0);
              \path (12,-6)  node {$m$};
              \path (10,-9)  node {$1$};
              \draw [dashed] (0,0)--++(22,0);
              \draw [dashed] (0,-5)--++(22,0);
              \draw [dashed] (0,-10)--++(22,0);
              \fill[black!100] (O)++(9,-10) circle(1.5ex);
              \fill[black!100] (O)++(13,-5) circle(1.5ex);
              \draw (14,-1) rectangle (18,-9);
              \path (16,-5)  node {$B$};
              \draw (4,-1) rectangle (8,-9);
              \path (6,-5)  node {$A$};
              \draw [dashed] (13,0)--++(0,-12);
              \draw [dashed] (9,0)--++(0,-12);
              \draw [dashed] (19,0)--++(0,-12);
              \draw [dashed] (3,0)--++(0,-12);
            \end{tikzpicture}
        
            \vspace{2mm}
            {\centering\subcaption{}\label{fig:perm-Nr49-a}\par}
          \end{subfigure}
          \hfill
          % ===== (b) =====
          \begin{subfigure}{0.47\textwidth}
            \centering
            \begin{tikzpicture}[line width=0.5pt,scale=0.24,baseline=(bb.center)]
              \path[use as bounding box] (0,-12) rectangle (25,1);
              \coordinate (bb) at (15,-4.5);
        
              \coordinate (O) at (0,0);
        		\path (16,-4)  node {$1$};
        		\draw [dashed] (5,0)--++(15,0);
        		\draw [dashed] (5,-5)--++(15,0);
        		\fill[black!100] (O)++(15,-5) circle(1.5ex);
        		\draw (10,-1) rectangle (14,-4);
        		\path (12,-2.5)  node {$C$};
        		\draw [dashed] (15,0)--++(0,-8);
        		\draw [dashed] (9,0)--++(0,-8);
            \end{tikzpicture}
        
            \vspace{2mm}
            {\centering\subcaption{}\label{fig:perm-Nr49-b}\par}
          \end{subfigure}

  \caption{Related to the proof of Theorem~\ref{equ:Nr.49}.}\label{fig:Nr.49}
\end{figure}

\begin{thm}[Pairs~S49--S52]\label{equ:Nr.49}
We have
\begin{eqnarray}
	F(x,t) &:=& \sum_{n \ge 1} x^n \sum_{\pi \in S_n(2413,3142)} 
	t^{\hspace{-1mm}\pattern{scale = 0.5}{2}{1/1,2/2}
    {0/0,1/0,1/1,1/2,2/0}(\pi)}
	= \sum_{n \ge 1} x^n \sum_{\pi \in S_n(2413,3142)} 
	t^{\hspace{-1mm}\pattern{scale = 0.5}{2}{1/2,2/1}
    {0/0,1/0,1/1,1/2,2/0}(\pi)} \nonumber\\
&=&\frac{1}{2} \Big(t \left(1 + x^2 - \sqrt{1 - 6x + x^2}
 - x (4 - \sqrt{1 - 6x + x^2})\right) \nonumber \\
& & +x \left(3-x- \sqrt{1 - 6x + x^2}\right)\Big). \label{eq:Nr.49}
\end{eqnarray}
\begin{eqnarray}F(x,t_1,t_2) &:=& \sum_{n \ge 1} x^n \sum_{\pi \in S_n(2413,3142)} 
t_1^{\hspace{-1mm}\pattern{scale = 0.5}{2}{1/1,2/2}{0/0,1/0,1/1,1/2,2/0}(\pi)}
t_2^{\hspace{-1mm}\pattern{scale = 0.5}{2}{1/2,2/1}{0/0,1/0,1/1,1/2,2/0}(\pi)} \nonumber \\
&=& (1-t_1t_2)x+\frac{1}{2}\Big((t_1+t_2)x+t_1t_2(1-2x)\Big)(1 - x - \sqrt{1 - 6x + x^2}).\nonumber
\end{eqnarray}
\end{thm}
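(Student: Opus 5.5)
Reading off the shading, the representative \(\pattern{scale = 0.6}{2}{1/1,2/2}{0/0,1/0,1/1,1/2,2/0}\) has the whole bottom row \((0,0),(1,0),(2,0)\) shaded and the whole middle column \((1,0),(1,1),(1,2)\) shaded. The plan is to show that an occurrence is exactly the element \(1\) immediately followed by some element, after which everything reduces to counting where \(1\) sits.

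\textbf{Characterising occurrences.} In an occurrence \(ab\) of the 12-pattern, the empty bottom row means no element of \(\pi\) is smaller than \(a\), so \(a=1\). The empty middle column means \(a\) and \(b\) are adjacent in \(\pi\). Hence \(\pi\) has an occurrence if and only if \(1\) is not the last letter, and then it has exactly one. By the same reading, the 21-pattern with this shading occurs exactly once if \(1\) is not the first letter, and otherwise does not occur. (The equidistribution of the two patterns already follows by reversal, as in Table~\ref{tab:sym-4group}.)

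\textbf{Single distribution.} Let \(A(x)\) be the g.f.\ for separable permutations ending with \(1\). As in the proof of Theorem~\ref{N9}, \(A(x)=x(F(x)+1)\). Every other separable permutation carries exactly one occurrence, so
\[
F(x,t)=A(x)+t\big(F(x)-A(x)\big)=x(F(x)+1)+t\big(F(x)-x-xF(x)\big).
\]
Substituting \eqref{eq:separable-gf} gives \eqref{eq:Nr.49}, since
\[
x(F(x)+1)=\tfrac12 x\big(3-x-\sqrt{1-6x+x^2}\big).
\]

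\textbf{Joint distribution.} Split by the position of \(1\), using that deleting \(1\) from the first or last position preserves separability (it is an \(\oplus\) or \(\ominus\) with the permutation \(1\)):
\begin{itemize}
\item \(n=1\): no occurrences of either pattern, contributing \(x\).
\item \(1\) first, \(n\ge 2\): only the 12-pattern occurs, contributing \(t_1xF(x)\).
\item \(1\) last, \(n\ge2\): only the 21-pattern occurs, contributing \(t_2xF(x)\).
\item \(1\) in the interior: both patterns occur once, contributing \(t_1t_2\big(F(x)-x-2xF(x)\big)\).
\end{itemize}
Summing,
\[
F(x,t_1,t_2)=x+(t_1+t_2)xF(x)+t_1t_2\big(F(x)-x-2xF(x)\big),
\]
which rearranges to the stated closed form after inserting \eqref{eq:separable-gf}.

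The only delicate step is the correct reading of the shaded boxes into ``\(1\) is immediately followed (resp.\ preceded) by some element''. The remaining steps are routine bookkeeping, where the one point of care is not double-counting the permutation \(1\) between the ``first'' and ``last'' cases.
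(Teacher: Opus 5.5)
Your proof is correct and follows essentially the same route as the paper. The paper likewise identifies an occurrence as $1$ immediately followed (resp.\ preceded) by another element, counts the avoiders as permutations ending with $1$ via $x(F(x)+1)$, and splits the joint distribution into the same four cases ($\pi=1$, $1B$, $A1$, $A1B$) with the generating functions $x$, $t_1xF(x)$, $t_2xF(x)$ and $t_1t_2\big(F(x)-x-2xF(x)\big)$.
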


\begin{proof}
By inspecting the pattern $\pattern{scale = 0.6}{2}{1/1,2/2}{0/0,1/0,1/1,1/2,2/0}$, we observe that the smallest element $1$, together with its adjacent element to the right (if it exists), forms a pattern $\pattern{scale = 0.6}{2}{1/1,2/2}{0/0,1/0,1/1,1/2,2/0}$, as illustrated in Figure~\ref{fig:perm-Nr49-a}. Therefore, a permutation avoids this pattern if and only if it ends with the smallest element $1$, as shown in Figure~\ref{fig:perm-Nr49-b}, and we denote its g.f.\ by $C(x)$. Consequently, the g.f. $F(x,t)$ can be expressed as follows:
\begin{equation}\label{Nr49-expr-A}
    F(x,t) = C(x) + t(F(x)-C(x)),
\end{equation}
since the pattern $\pattern{scale = 0.6}{2}{1/1,2/2}{0/0,1/0,1/1,1/2,2/0}$ can occur at most once in a permutation.

Since the block $C$ may be empty, the g.f. $C(x)$ for the separable permutations avoiding this pattern is given by:
\begin{equation}\label{Nr49-expr-B}
   C(x) =  x (F(x)+1) .
\end{equation}

Substituting~\eqref{Nr49-expr-B} into~\eqref{Nr49-expr-A}, we obtain
\begin{equation}\label{Nr49-expr-C}
    F(x,t) = F(x)\big(x + t(1 - x)\big) + x(1 - t).
\end{equation}
Furthermore, substituting~\eqref{eq:separable-gf}~into~\eqref{Nr49-expr-C} yields~\eqref{eq:Nr.49}.

To derive the joint distribution of the patterns, we observe that any permutation is either \(1\), or of the form \(1B\) with \(B\) nonempty, or of the form \(A1\) with \(A\) nonempty, or of the form \(A1B\) with both \(A\) and \(B\) nonempty. The corresponding generating functions for these cases are \(x\), \(t_1 x F(x)\), \(t_2 x F(x)\), and \(t_1 t_2 \bigl(F(x) - 2xF(x) - x\bigr)\), respectively, which leads to the desired result.
\end{proof}

\subsection{Distributions via functional equations}\label{function equ}
In this section, we investigate four classes of pairs of mesh patterns whose occurrences in separable permutations may appear more than once, making their distributions considerably more intricate. To determine their distributions, we establish the corresponding functional equations by analyzing all admissible configurations. The pairs are as follows:
\begin{center}
	\begin{tabular}{rrrrr}
		S13 = $\pattern{scale = 0.6}{2}{1/1,2/2}
		{1/0,1/1,1/2}\pattern{scale = 0.6}{2}{1/2,2/1}{1/0,1/1,1/2}$;&\hspace{-0.3cm}
		S45 = $\pattern{scale = 0.6}{2}{1/1,2/2}{0/0,0/2,1/0,1/1,2/0,2/2}\pattern{scale = 0.6}{2}{1/2,2/1}{0/0,0/2,1/0,1/1,2/0,2/2}$;&\hspace{-0.3cm}
		S61 = $\pattern{scale = 0.6}{2}{1/1,2/2}{0/2,1/2,2/2}\pattern{scale = 0.6}{2}{1/2,2/1}{0/2,1/2,2/2}$;&\hspace{-0.3cm}
		S65 = $\pattern{scale = 0.6}{2}{1/1,2/2}{0/0,0/1,2/0,2/1}\pattern{scale = 0.6}{2}{1/2,2/1}{0/0,0/1,2/0,2/1}$. &\hspace{-0.3cm}
	\end{tabular}
\end{center}

\begin{thm}[Pairs~S13 and S14] \label{equ:Nr.13}
	Let
	$$
	F(x,t) = \sum_{n \ge 1} x^n \sum_{\pi \in S_n(2413,3142)} 
	t^{\hspace{-1mm}\pattern{scale = 0.5}{2}{1/1,2/2}{1/0,1/1,1/2}(\pi)}
	= \sum_{n \ge 1} x^n \sum_{\pi \in S_n(2413,3142)} 
	t^{\hspace{-1mm}\pattern{scale = 0.5}{2}{1/2,2/1}{1/0,1/1,1/2}(\pi)}.
	$$   
	Then, $F(x,t)$ satisfies
	$$tF^3(x,t) + txF^2(x,t) + ((1+t)x - 1)F(x,t) + x = 0.$$
\end{thm}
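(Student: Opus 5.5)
The functional equation in the statement is exactly the one satisfied by $Q(x,t)$ in Lemma~\ref{Sxt}. The plan is therefore to show that the pattern $\pattern{scale = 0.5}{2}{1/1,2/2}{1/0,1/1,1/2}$ counts ascents of a separable permutation. Since that mesh pattern and its companion in S13 are equidistributed by the symmetry arguments of Section~\ref{section:3}, it suffices to treat the increasing one. Ascents are equidistributed with descents on separable permutations by Lemma~\ref{Sxt}, so the identification with either statistic is enough.

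\textbf{Step 1 (identify occurrences).} Shading the whole middle column $\{(1,0),(1,1),(1,2)\}$ forces the two elements of an occurrence to be adjacent in position. No element of $\pi$ may lie strictly between them horizontally, at any height. The pattern is $12$, so an occurrence is precisely a pair $\pi_i\pi_{i+1}$ with $\pi_i<\pi_{i+1}$. Conversely, every such adjacent pair is an occurrence, since nothing lies in the shaded column. Hence the pattern count on $\pi$ equals $\asc(\pi)$.

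\textbf{Step 2 (conclude).} We then have
$$F(x,t)=\sum_{n\ge1}x^n\sum_{\pi\in S_n(2413,3142)}t^{\asc(\pi)}=Q(x,t).$$
The claimed cubic functional equation follows directly from \eqref{eq:Stq}.

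\textbf{Step 3 (the decreasing pattern).} For $\pattern{scale = 0.5}{2}{1/2,2/1}{1/0,1/1,1/2}$, the same argument identifies its occurrences with descents. The second expression for $Q(x,t)$ in Lemma~\ref{Sxt} then gives the equality of the two sums in the statement. Alternatively, reversal maps ascents to descents and preserves separability.

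There is no real obstacle. The only point needing care is Step 1: the shaded column forbids elements at all heights between the two positions, and only at those positions, so there is no constraint on elements outside the column.
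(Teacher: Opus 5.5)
Your proposal is correct and follows the paper's proof exactly: the paper likewise observes that occurrences of the pattern with the shaded middle column are precisely ascents, and then invokes Lemma~\ref{Sxt} from~\cite{fu2018two}. Your Step~3, which identifies the decreasing pattern with descents, spells out what the paper leaves to the symmetry discussion of Section~\ref{section:3}.
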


\begin{proof}
Occurrences of the pattern $\hspace{-1mm}\pattern{scale = 0.6}{2}{1/1,2/2}{1/0,1/1,1/2}$ are precisely the occurrences of ascents, so the result follows from Lemma~\ref{Sxt}, proved in~\cite{fu2018two}.
\end{proof}

\begin{figure}[htbp]
	\centering
	\begin{tikzpicture}[line width=0.5pt,scale=0.24]
		\coordinate (O) at (0,0);
		\path (16,1)  node {$m$};
		\path (10,-9)  node {$1$};
		\draw [dashed] (0,5)--++(25,0);
		\draw [dashed] (0,0)--++(25,0);
		\draw [dashed] (0,-5)--++(25,0);
		\draw [dashed] (0,-10)--++(25,0);
		\fill[black!100] (O)++(15,0) circle(1.5ex);
		\fill[black!100] (O)++(9,-10) circle(1.5ex);
		\draw (4,-6) rectangle (8,-9);
		\path (6,-7.5)  node {$A$};
		\draw (16,-1) rectangle (20,-4);
		\path (18,-2.5)  node {$B$};
		\draw (10,4) rectangle (14,1);
		\path (12,2.5)  node {$C$};
		\draw [dashed] (15,5)--++(0,-17);
		\draw [dashed] (9,5)--++(0,-17);
		\draw [dashed] (3,5)--++(0,-17);
		\draw [dashed] (21,5)--++(0,-17);
	\end{tikzpicture}
	\caption{Related to the proof of Theorem~\ref{equ:Nr.45}.}\label{fig:Nr.45}
\end{figure}

\begin{thm}[Pairs~S45--48]\label{equ:Nr.45}
We have
\begin{eqnarray}	
F(x,t) &:=& \sum_{n \ge 1} x^n \sum_{\pi \in S_n(2413,3142)} 
	t^{\hspace{-1mm}\pattern{scale = 0.5}{2}{1/1,2/2}
		{0/0,0/2,1/0,1/1,2/0,2/2}(\pi)}
	= \sum_{n \ge 1} x^n \sum_{\pi \in S_n(2413,3142)} 
	t^{\hspace{-1mm}\pattern{scale = 0.5}{2}{1/2,2/1}
		{0/0,0/2,1/0,1/1,2/0,2/2}(\pi)}\nonumber\\
		&=& \frac{
			2\,\left(
			(4(t-1)-1)x^{3}
			+ \sqrt{1 - 6x + x^{2}}
			+ x\,\left(f_1(x,t)\right)
			+ x^{2}\,\left(f_2(x)\right)
			\right)
		}{
			(x - 3 - \sqrt{1 - 6x + x^{2}})\,(3 + (3 - 4t)x + \sqrt{1 - 6x + x^{2}})
		},\label{eq:Nr.45}
	\end{eqnarray}
	where
	\begin{equation}
		\begin{aligned}
			f_1(x,t)&
			= -6 + \sqrt{1 - 6x + x^{2}} + 2t\,(1 - \sqrt{1 - 6x + x^{2}}),\\ \nonumber
			f_2(x,t)&
			= 11 - 4\sqrt{1 - 6x + x^{2}} - 2t\,(5 - 2\sqrt{1 - 6x + x^{2}}).
		\end{aligned}
	\end{equation}
\end{thm}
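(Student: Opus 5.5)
The plan is to first decode the shading of the representative pattern $p=\pattern{scale = 0.5}{2}{1/1,2/2}{0/0,0/2,1/0,1/1,2/0,2/2}$ into a structural condition on $\pi$. Then I set up a functional equation by decomposing $\pi$ around its element $1$, reusing the distribution of N14 from Theorem~\ref{joint-separ-length-1}. The other pairs S46--S48 follow by symmetry as in Table~\ref{tab:sym-4group}. Equidistribution of $12$ and $21$ within S45 follows from reverse/complement, since the shading is symmetric.

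\textbf{Step 1 (decoding).} The shaded boxes $(0,0),(1,0),(2,0)$ force the smaller element of an occurrence to be $1$. The shaded boxes $(0,2),(2,2),(1,1)$ then say the following about the larger element $m$: everything to the left of $1$ is smaller than $m$, everything to the right of $m$ is smaller than $m$, and everything strictly between $1$ and $m$ is larger than $m$. So $\pi=A\,1\,C\,m\,B$ with $C>m$ and $A,B<m$, as in Figure~\ref{fig:Nr.45}. Writing $\pi=A\,1\,R$, the element $m$ must be an occurrence of N14 $=\pattern{scale = 0.5}{1}{1/1}{0/0,1/1}$ inside $R$ (all elements before it in $R$ are larger and all after it are smaller), and in addition $m>\max A$. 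Separability also forces $A<B$, since $a>b$ would produce $a\,1\,m\,b\sim 3142$.

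\textbf{Step 2 (case $A=\varepsilon$).} Here $\pi=1R$, where $R$ is empty or any separable permutation. The occurrences of $p$ are exactly the N14-occurrences of $R$, so this case contributes $x\,(1+F_{14}(x,t))$, where $F_{14}$ is given by \eqref{eq:separ-length-1}. This explains the denominator $3+(3-4t)x+\sqrt{1-6x+x^2}$ appearing in \eqref{eq:Nr.45}.

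\textbf{Step 3 (case $A\neq\varepsilon$).} I will use the structure of Figure~\ref{sepStructure}, or equivalently the sum/skew decomposition at the block containing $1$. If $\pi=\sigma\ominus\tau$ with $1\in\tau$ and $\sigma$ nonempty, then all of $\sigma$ lies to the left of $1$ and exceeds every element of $\tau$. Hence $\max A>$ every candidate $m$, unless $\sigma$ lies to the right, which is impossible. So either no occurrences exist, or the count reduces to that of $\tau$. In the sum case $\pi=\sigma\oplus\tau$ with $1\in\sigma$, the elements of $\tau$ are larger than everything in $A$ and lie to the right. An N14-point of $R$ can lie in $\tau$ only if $\sigma$ has nothing to the right of $1$. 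Otherwise the N14-points of $R$ lie in the part of $\sigma$ after $1$ and must still beat $\max A$, which gives a recursion of the same type inside $\sigma$.

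I will iterate this down the $\oplus/\ominus$ tree. The aim is a linear equation of the form $F(x,t)=x(1+F_{14}(x,t))+\alpha(x)F(x,t)+\beta(x,t)$, with $\alpha,\beta$ expressed through $F(x)$, $F_{14}$, and the irreducible/reducible split from Lemma~\ref{half-first-last} (each half being $\tfrac{F(x)-x}{2}$).

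\textbf{Main obstacle.} The main obstacle is Step 3: correctly tracking the interaction condition $m>\max A$ across the decomposition without double counting. If the sum/skew recursion becomes tangled, my first fallback is to decompose instead by the position of $n$ via Figure~\ref{sepStructure}. The element $n$ is either the last candidate $m$ (when nothing larger follows it, i.e.\ $R_j$ blocks) or sits in $C$. Conditioning on whether $1$ lies in $R_1$ or in some $L_i$ gives cases that are each expressible by $F$, $F_{14}$ and $F(x,t)$. Finally I will solve the linear equation, substitute \eqref{eq:separable-gf} and \eqref{eq:separ-length-1}, and check the first coefficients against the data.
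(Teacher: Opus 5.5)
Your decoding in Step~1 and the case $A=\varepsilon$ in Step~2 are correct. Step~3, however, is where all the content lies, and it is left unfinished. Worse, the sketch you give for it is wrong in a way that points toward an equation of the wrong shape.

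Take the sum case $\pi=\sigma\oplus\tau$ with $\tau\neq\varepsilon$, and suppose $\sigma$ has some element $y$ to the right of $1$. Then there is no ``recursion of the same type inside $\sigma$''. A candidate $m\in\sigma$ has all of $\tau$ to its right and above it, which violates the shaded box $(2,2)$. A candidate $m\in\tau$ has $y$ between $1$ and $m$ both in position and in value, which violates the shaded box $(1,1)$. So such $\pi$ have no occurrences at all. The same holds in the skew case, where your alternative ``or the count reduces to that of $\tau$'' never happens either. Hence no term $\alpha(x)F(x,t)$ can arise: the answer is explicit in $F(x)$ and $F_{14}(x,t)$.

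The missing idea is already in your Step~1. You proved $A<B$, and trivially $A<m<C$. So a single occurrence $(1,m)$ forces all of $A$ to lie below all of $R=CmB$, that is, $\pi=(A\ominus 1)\oplus R$; this is the configuration of Figure~\ref{fig:Nr.45}. Consequently:
\begin{itemize}
\item every N14 point of $R$ automatically exceeds $\max A$;
\item the occurrences are exactly the pairs $(1,m')$ with $m'$ an N14 point of $R$;
\item $A$, which is empty or any separable permutation, is completely independent of $R$.
\end{itemize}
Conversely, every $(A\ominus 1)\oplus R$ is separable with exactly that many occurrences, and every separable permutation not of this form has none. This gives directly
\[
F(x,t)=F(x)+x\,\bigl(F(x)+1\bigr)\bigl(F_{14}(x,t)-F_{14}(x,1)\bigr),
\]
which is the paper's proof. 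Neither the split on $A=\varepsilon$ nor any descent through the decomposition tree is needed.

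When you check coefficients, be aware that the displayed closed form contains a misprint. Its numerator should read $2\bigl(4(t-1)x^{3}-1+\sqrt{1-6x+x^{2}}+xf_1+x^{2}f_2\bigr)$. With $(4(t-1)-1)x^{3}$ as printed, it does not even reduce to $F(x)$ at $t=1$.
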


\begin{proof}
	An occurrence of the pattern $\hspace{-1mm}\pattern{scale = 0.6}{2}{1/1,2/2}{0/0,0/2,1/0,1/1,2/0,2/2}$ must involve the element 1, and another element $m$ to its right.  Moreover, to avoid the pattern 3142, all elements to the left of $1$ must be smaller than those to the right of $m$, as illustrated in Figure~\ref{fig:Nr.45}.  Now, the element $m$ is an occurrence of the pattern \hspace{-1mm}$\pattern{scale = 0.6}{1}{1/1}{0/0,1/1}$ in the permutation $CmB$. Hence, the number of occurrences of the pattern \hspace{-1mm}$\pattern{scale = 0.6}{2}{1/1,2/2}{0/0,0/2,1/0,1/1,2/0,2/2}$ in the permutation $A1CmB$ is the same as that of the pattern \hspace{-1mm}$\pattern{scale = 0.6}{1}{1/1}{0/0,1/1}$ in $CmB$. Let \(S(x,t)\) denote the g.f.\ in
\eqref{eq:separ-length-1}. If $A(x,t)$ is the g.f.\ of separable permutations with at least one occurrence of $\hspace{-1mm}\pattern{scale = 0.6}{2}{1/1,2/2}{0/0,0/2,1/0,1/1,2/0,2/2}$, then 
	\begin{equation}\label{Nr45-expr-A}
		A(x,t) = x (S(x,t)-S(x,0)) ( F(x) + 1 ),
	\end{equation}
	where $x$ corresponds to the element 1, and $F(x)+1$ corresponds to the possibly empty block $A$; note that $S(x,t)-S(x,0)$ is the distribution with at least one occurrence of \hspace{-1mm}$\pattern{scale = 0.6}{1}{1/1}{0/0,1/1}$. 
		
	Adding the contributions from permutations with and without an occurrence, we obtain
	\begin{eqnarray}
		F(x,t) &=& x (S(x,t)-S(x,0)) ( F(x) + 1 ) + \big(F(x) - x (S(x,1)-S(x,0)) ( F(x) + 1 )\big)\nonumber\\
		&=& F(x)+ x(S(x,t)-S(x,1))(F(x)+1)).\label{Nr45-expr-B}
	\end{eqnarray}
	
	Finally, substituting~\eqref{eq:separable-gf} and~\eqref{eq:separ-length-1} into~\eqref{Nr45-expr-B} yields~\eqref{eq:Nr.45}. 
\end{proof}

\begin{figure}[htbp]
	\centering
	% ===== (a) =====
	\begin{subfigure}{0.47\textwidth}
		\centering
		\begin{tikzpicture}[line width=0.5pt,scale=0.24,baseline=(bb.center)]
			\path[use as bounding box] (0,-13) rectangle (30,6);
			\coordinate (bb) at (15,-4.5); 
			\coordinate (O) at (0,0);
			\path (15,6)  node {$n$};
			\draw [dashed] (0,5)--++(30,0);
			\draw [dashed] (0,0)--++(30,0);
			\draw [dashed] (0,-5)--++(30,0);
			\draw [dashed] (0,-10)--++(30,0);
			\fill[black!100] (O)++(15,5) circle(1.5ex);
			\draw (22,-6) rectangle (26,-9);
			\path (24,-7.5)  node {$D$};
			\draw (4,-6) rectangle (8,-9);
			\path (6,-7.5)  node {$C$};
			\draw (16,-1) rectangle (20,-4);
			\path (18,-2.5)  node {$B$};
			\draw (10,4) rectangle (14,1);
			\path (12,2.5)  node {$A$};
			\draw [dashed] (15,5)--++(0,-17);
			\draw [dashed] (9,5)--++(0,-17);
			\draw [dashed] (3,5)--++(0,-17);
			\draw [dashed] (21,5)--++(0,-17);
			\draw [dashed] (27,5)--++(0,-17);
		\end{tikzpicture}
		\vspace{2mm}
		{\centering\subcaption{}\label{fig:perm-Nr61-a}\par}
	\end{subfigure}
	\hfill
	% ===== (b) =====
	\begin{subfigure}{0.47\textwidth}
		\centering
		\begin{tikzpicture}[line width=0.5pt,scale=0.24,baseline=(bb.center)]
			\path[use as bounding box] (0,-13) rectangle (30,6);
			\coordinate (bb) at (15,-4.5);
			\coordinate (O) at (0,0);
			\path (15,6)  node {$n$};
			\draw [dashed] (0,5)--++(30,0);
			\draw [dashed] (0,0)--++(30,0);
			\draw [dashed] (0,-5)--++(30,0);
			\draw [dashed] (0,-10)--++(30,0);
			\fill[black!100] (O)++(15,5) circle(1.5ex);
			\draw (22,-6) rectangle (26,-9);
			\path (24,-7.5)  node {$D$};
			\draw (4,-6) rectangle (8,-9);
			\path (6,-7.5)  node {$C$};
			\draw (16,1) rectangle (20,4);
			\path (18,2.5)  node {$B$};
			\draw (10,-4) rectangle (14,-1);
			\path (12,-2.5)  node {$A$};
			\draw [dashed] (15,5)--++(0,-17);
			\draw [dashed] (9,5)--++(0,-17);
			\draw [dashed] (3,5)--++(0,-17);
			\draw [dashed] (21,5)--++(0,-17);
			\draw [dashed] (27,5)--++(0,-17);
		\end{tikzpicture}
		\vspace{2mm}
		{\centering\subcaption{}\label{fig:perm-Nr61-b}\par}
	\end{subfigure}
	\caption{Related to the proof of Theorem~\ref{equ:Nr.61}.}\label{fig:Nr.61}
\end{figure}

\begin{thm}[Pairs~S61--S64]\label{equ:Nr.61}
	We have
	\begin{eqnarray}
	F(x,t) &:=& \sum_{n \ge 1} x^n \sum_{\pi \in S_n(2413,3142)} 
	t^{\hspace{-1mm}\pattern{scale = 0.5}{2}{1/1,2/2}{0/2,1/2,2/2}(\pi)}
	= \sum_{n \ge 1} x^n \sum_{\pi \in S_n(2413,3142)} 
	t^{\hspace{-1mm}\pattern{scale = 0.5}{2}{1/2,2/1}{0/2,1/2,2/2}(\pi)}\nonumber\\
		 &=& \frac{
			x\,\left(x-3 + \sqrt{1 - 6x + x^{2}}\right)
			\,\left(tx-3 + \sqrt{1 - 6tx + t^{2}x^{2}}\right)
		}{
			 tx^{2}-3
			- \sqrt{1 - 6x + x^{2}}
			- \sqrt{1 - 6tx + t^{2}x^{2}}
			+ f_1(x,t)
			- x\,f_2(x,t)
		},\label{eq:Nr.61}
	\end{eqnarray}
	where
	\begin{equation}
		\begin{aligned}
			f_1(x,t)&
			= \sqrt{1 - 6x + x^{2}} \, \sqrt{1 - 6tx + t^{2}x^{2}},\\ \nonumber
			f_2(x,t)&
			= 1 - \sqrt{1 - 6tx + t^{2}x^{2}}
			+ t\,(1 - \sqrt{1 - 6x + x^{2}}).
		\end{aligned}
	\end{equation}
\end{thm}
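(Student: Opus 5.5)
The key observation is that an occurrence of the pattern $\pattern{scale = 0.5}{2}{1/1,2/2}{0/2,1/2,2/2}$ is a pair $\pi_i<\pi_j$, $i<j$, with no element of $\pi$ larger than $\pi_j$. Hence $\pi_j=n$, and $\pi_i$ can be any element to the left of $n$. So the number of occurrences equals the number of elements to the left of $n$. In the same way, occurrences of $\pattern{scale = 0.5}{2}{1/2,2/1}{0/2,1/2,2/2}$ are counted by the number of elements to the right of $n$. Reversal is a bijection on separable permutations that swaps these two statistics, which gives the equidistribution. It then remains to compute the g.f.\ $F(x,t)$ in which every element to the left of $n$ is weighted by $xt$ and every other element by $x$.

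I will use the $\oplus/\ominus$ decomposition (equivalently the structure in Figure~\ref{sepStructure}, cf.\ Figure~\ref{fig:Nr.61}), split according to the last sum-component. Every separable $\pi$ can be written uniquely as $\pi=\sigma\oplus\tau$, where $\sigma$ is empty or separable and $\tau$ is sum-indecomposable. The maximum $n$ lies in $\tau$, so every element of $\sigma$ lies to the left of $n$ and gets weight $t$. This gives
\[F(x,t)=\bigl(1+F(tx)\bigr)\,I(x,t),\]
where $I$ is the weighted g.f.\ of sum-indecomposable separable permutations. Such a $\tau$ is either $1$ (contributing $x$) or skew-decomposable. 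In the second case $\tau=\alpha\ominus\beta$, with $\alpha$ skew-indecomposable containing $n$ and $\beta$ nonempty and lying entirely to the right of $n$, so $\beta$ is unweighted and contributes $F(x)$. Let $J(x,t)$ be the weighted g.f.\ of skew-indecomposable separable permutations. Every permutation is either skew-indecomposable or of the form $\alpha\ominus\beta$ as above, so $F(x,t)=J(x,t)\bigl(1+F(x)\bigr)$, that is, $J=F(x,t)/(1+F(x))$. Therefore
\[I(x,t)=x+\frac{F(x,t)\,F(x)}{1+F(x)}.\]

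Combining these relations gives a linear equation for $F(x,t)$, whose solution is
\[F(x,t)=\frac{x\,\bigl(1+F(x)\bigr)\bigl(1+F(tx)\bigr)}{1-F(x)F(tx)}.\]
As a sanity check, at $t=1$ this becomes $x(1+F)/(1-F)$, and this equals $F$ by $F^2=(1-x)F-x$, which follows from \eqref{eq:separable-gf}. Finally I will substitute \eqref{eq:separable-gf} for $F(x)$ and for $F(tx)$ (the latter as in Theorem~\ref{N15}), and then clear and rearrange denominators until the result matches \eqref{eq:Nr.61}. This last step is where I expect the main difficulty. It is purely algebraic, but the two radicals $\sqrt{1-6x+x^2}$ and $\sqrt{1-6tx+t^2x^2}$ have to be combined into exactly the stated numerator and denominator, including the cross term $f_1$. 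I plan to verify this by multiplying through by the stated denominator, or by checking with a computer algebra system. The only combinatorial point that needs care is the uniqueness of the decompositions $\sigma\oplus\tau$ and $\alpha\ominus\beta$, and this is standard for separable permutations.
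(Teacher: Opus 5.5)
Your argument is correct, and it reaches the key equation through a different decomposition than the paper uses.

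\textbf{How the paper does it.} The paper stays inside the block picture $L_1\cdots L_m\,n\,R_m\cdots R_1$ of Figure~\ref{sepStructure}. It splits permutations of length at least $2$ according to whether $n-1$ lies to the left or to the right of $n$, and introduces one auxiliary series for each case. Each series gets a recursion by peeling off the topmost pair of blocks next to $n$: the left block is weighted by $F(xt)$ and the right one by $F(x)$. This needs some bookkeeping about which outer blocks may be empty. Adding the two recursions gives $F(x,t)=x+xF(xt)+xF(x)+xF(xt)F(x)+F(xt)F(x)F(x,t)$. Rearranged, this is exactly your equation $F(x,t)\bigl(1-F(x)F(tx)\bigr)=x\bigl(1+F(x)\bigr)\bigl(1+F(tx)\bigr)$.

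\textbf{What your route buys.} You use the last $\oplus$-component and the first $\ominus$-component, and the identity $J=F(x,t)/(1+F(x))$ replaces a second recursion. This makes uniqueness automatic and produces the factored solution directly, which is why your $t=1$ check and the final algebra are easy. It also generalizes at no cost. For instance, weighting the elements to the right of $n$ by $t_2$ gives the joint distribution $x\bigl(1+F(t_1x)\bigr)\bigl(1+F(t_2x)\bigr)/\bigl(1-F(t_1x)F(t_2x)\bigr)$.

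\textbf{A warning about your last step.} The planned match cannot succeed literally, because the closed form \eqref{eq:Nr.61} as printed is off by an overall sign. Write $s=\sqrt{1-6x+x^2}$ and $u=\sqrt{1-6tx+t^2x^2}$. Your formula becomes $x(3-x-s)(3-tx-u)/\bigl(4-(1-x-s)(1-tx-u)\bigr)$. The printed numerator equals $x(3-x-s)(3-tx-u)$. The printed denominator, however, expands to exactly $-\bigl(4-(1-x-s)(1-tx-u)\bigr)$.

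A quick check shows which version is right. The printed numerator is $4x+O(x^2)$ and the printed denominator is $-4+O(x^2)$, so the display begins with $-x$. Counting directly gives $F(x,t)=x+(1+t)x^2+(2+2t+2t^2)x^3+\cdots$, and your formula reproduces this. So keep your expression, which agrees with the paper's own functional equation, and correct the display by negating its denominator.
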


\begin{proof}
In a permutation $\pi = \pi_1 \cdots \pi_n$, the element $n$, together with each element to its left, forms an occurrence of the pattern $\pattern{scale = 0.6}{2}{1/1,2/2}{0/2,1/2,2/2}$.
	For $n \ge 2$, separable permutations can be divided into two cases 
	according to the position of the second largest element $n-1$, as illustrated in Figure~\ref{fig:Nr.61}. Specifically, the block $A$ in Figure~\ref{fig:perm-Nr61-a} and the block $B$ in Figure~\ref{fig:perm-Nr61-b} must be nonempty (each containing at least the element $n-1$). We denote by $f_1(x,t)$ and $f_2(x,t)$ the generating functions corresponding to the structures illustrated in Figures~\ref{fig:perm-Nr61-a} and~\ref{fig:perm-Nr61-b}, respectively.
	
	In the situation of Figure~\ref{fig:perm-Nr61-a}, there are three cases to consider. 
	\begin{itemize}
		\item[(a)] The blocks $B$, $C$, and $D$ are empty, while $A$ is nonempty. In this case, since any element in the block $A$, together with the element $n$, contributes an occurrence of the pattern $\pattern{scale = 0.6}{2}{1/1,2/2}{0/2,1/2,2/2}$, the block $A$ can be represented by $F(xt)$, while the element $n$ contributes the term $x$. Thus, we obtain the term
		$$
		x F(xt).
		$$
		\item[(b)] The blocks $C$ and $D$ are empty, while $A$ and
		$B$ is nonempty. Similar to case~(a), the block $A$ can be represented by $F(xt)$, while the element~$n$ contributes the term $x$. Clearly, the elements in the block $B$ are independent of this pattern, and hence $B$ is represented by $F(x)$. Hence, in this case, we have the term
		$$
		xF(xt)F(x).
		$$
		\item[(c)] The blocks $A$ and $B$ are nonempty, while among the blocks $C$ and $D$, at least $C$ is nonempty. In this case, after removing $A$ and $B$, the element $n$ together with all the remaining elements in $\pi$, still is a separable permutation in which the second largest element $n-1$ lies to the left of $n$; hence, it has the g.f.\ $f_1(x,t)$. Similar to the previous cases, the block $A$ contributes the term $F(xt)$, while the block $B$ contributes the term $F(x)$. Therefore, this case gives the term
		$$
		F(xt)F(x)f_1(x,t).
		$$
	\end{itemize}
	Combining cases~(a)--(c), we have
	\begin{equation}\label{Nr61-expr-A}
		f_1(x,t) = xF(xt) + xF(xt)F(x) + F(xt)F(x)f_1(x,t).
	\end{equation}
	
	When the element~$n-1$ is to the right of the largest element $n$, 
	as shown in Figure~\ref{fig:perm-Nr61-b}, similar to the analysis of $f_1(x,t)$, we obtain
	\begin{equation}\label{Nr61-expr-B}
		f_2(x,t) = xF(x) + xF(xt)F(x) + F(xt)F(x)f_2(x,t).
	\end{equation}
	
	It is clear that $F(x,t)$ can be expressed as follows:
	\begin{equation}\label{Nr61-expr-C}
		F(x,t) = x + f_1(x,t) + f_2(x,t),
	\end{equation}
	where the permutation 1 contributes the term $x$.
	
	Substituting~\eqref{Nr61-expr-A} and~\eqref{Nr61-expr-B} into~\eqref{Nr61-expr-C}, we obtain
	\begin{equation}\label{Nr61-expr}
		\begin{aligned}
			F(x,t) &= x + xF(xt) + xF(x) + 2xF(xt)F(x) 
			+ F(xt)F(x)\big(f_1(x,t) + f_2(x,t)\big) \\
			&=  x + xF(xt) + xF(x) + 2xF(xt)F(x) 
			+ F(xt)F(x)\big(F(x,t)-x)\big) \\
			&= x + xF(xt) + xF(x) + xF(xt)F(x) 
			+ F(xt)F(x)F(x,t).
		\end{aligned}
	\end{equation}
	Solving this equation for $F(x,t)$ and substituting~\eqref{eq:separable-gf} yields~\eqref{eq:Nr.61}.
\end{proof}

\begin{thm}[Pairs~S65--S68]\label{equ:Nr.65}
We have
	\begin{eqnarray}
	F(x,t) &:=& \sum_{n \ge 1} x^n \sum_{\pi \in S_n(2413,3142)} 
	t^{\hspace{-1mm}\pattern{scale = 0.5}{2}{1/1,2/2}{0/0,0/1,2/0,2/1}(\pi)}
	= \sum_{n \ge 1} x^n \sum_{\pi \in S_n(2413,3142)} 
	t^{\hspace{-1mm}\pattern{scale = 0.5}{2}{1/2,2/1}{0/0,0/1,2/0,2/1}(\pi)} \nonumber\\
			&=& \frac{
				3 - x  - 4tx + \sqrt{1 - 6x + x^2}
				- 4 \, f(x,t)
			}{
				2t\,\left(1 + x - \sqrt{1 - 6x + x^2}\right)
			},\label{eq:Nr.65}
		\end{eqnarray}
	where
	\begin{equation}
		\begin{aligned}
			f(x,t)
			= \sqrt{ -1 - x + \sqrt{1 - 6x + x^2}
				+ \tfrac{1}{16}\!\left(-5 + (4t-1)x + \sqrt{1 - 6x + x^2}\right)^{2}}.\nonumber
		\end{aligned}
	\end{equation}
\end{thm}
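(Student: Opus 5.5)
The plan is to show that, on every permutation (separable or not), the number of occurrences of the pattern $\pattern{scale = 0.5}{2}{1/1,2/2}{0/0,0/1,2/0,2/1}$ equals $\rmin(\pi)-1$. The formula then follows from Theorem~\ref{four-stat} as
\[
F(x,t)=\frac{G(x,t)}{t}.
\]
The factor $t$ in the denominator of \eqref{eq:Nr.65} already points to this form. Equidistribution with the companion pattern $\pattern{scale = 0.5}{2}{1/2,2/1}{0/0,0/1,2/0,2/1}$, and with the other pairs S66--S68, is already given by the symmetry arguments behind Table~\ref{tab:sym-4group}. So only the representative $12$-pattern needs to be treated.

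\textbf{Step 1: characterize occurrences.} An occurrence is a pair $a=\pi_i<b=\pi_j$ with $i<j$. The shaded boxes $(0,0),(0,1)$ say that no element to the left of $a$ has value less than $b$. The shaded boxes $(2,0),(2,1)$ say that no element to the right of $b$ has value less than $b$.

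\textbf{Step 2: reduce to right-to-left minima.} From the second condition, $b$ is a right-to-left minimum. Every element smaller than $b$ therefore lies to the left of $b$. From the first condition, $a$ is the leftmost element whose value is smaller than $b$.

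Conversely, take any right-to-left minimum $b\neq 1$. The set of elements smaller than $b$ is nonempty, and all of them lie to its left. Let $a$ be the leftmost of these. Then every element to the left of $a$ exceeds $b$, so the pair $(a,b)$ is an occurrence.

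Hence occurrences are in bijection with the right-to-left minima different from $1$. Since $1$ is always a right-to-left minimum, the number of occurrences is $\rmin(\pi)-1$.

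\textbf{Step 3: substitute.} Theorem~\ref{four-stat} with $\operatorname{stat}=\rmin$ gives $F(x,t)=G(x,t)/t$, with $G$ as in \eqref{eq:four-stat}. It remains to check that this agrees with \eqref{eq:Nr.65}.

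The only real obstacle is this algebra. Both expressions involve a nested radical $\sqrt{(\cdots)^2+\sqrt{1-6x+x^2}-x-1}$, so one must confirm that the radicand in $f(x,t)$ equals $\tfrac14$ of the corresponding square in \eqref{eq:four-stat} after factoring out $4$. Concretely, $4\sqrt{\cdot}$ in \eqref{eq:four-stat} becomes $4f$, since $\left(\tfrac{x}{4}+\tfrac54-\tfrac14\sqrt{1-6x+x^2}-tx\right)^2=\tfrac1{16}\left(-5+(4t-1)x+\sqrt{1-6x+x^2}\right)^2$.

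After that, rationalize the denominator $2(\sqrt{1-6x+x^2}-x-1)$ against $2(1+x-\sqrt{1-6x+x^2})$, which differ by a sign. I would finish with a series check of the first few coefficients as a sanity test.
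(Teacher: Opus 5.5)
Your proposal is correct and matches the paper's proof: occurrences are in bijection with the right-to-left minima $b\neq 1$, each paired with the leftmost element smaller than $b$, so $F(x,t)=G(x,t)/t$ by Theorem~\ref{four-stat}. The final algebra is simpler than you expect. Once the radicand is identified with $f(x,t)$, you only multiply the numerator and denominator of $G(x,t)$ by $-1$ to get the stated formula, so no rationalization is needed.
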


\begin{proof}
Let $ab$ be an occurrence of the pattern $\hspace{-1mm}\pattern{scale = 0.6}{2}{1/1,2/2}{0/0,0/1,2/0,2/1}$, so that all elements to the right of $b$ and all elements to the left of $a$ are larger than $b$. Therefore, $b$ is a right-to-left minimum, and $a$ is the leftmost element smaller than $b$ among those to its left. In other words, in a separable permutation, each right-to-left minimum together with the leftmost element smaller than it, if such element exists, contributes one occurrence of the pattern~$\pattern{scale = 0.6}{2}{1/1,2/2}{0/0,0/1,2/0,2/1}$. For every right-to-left minimum $b\neq 1$, there exists a unique smaller element $a$ to its left such that $ab$ forms an occurrence of the pattern~$\pattern{scale = 0.6}{2}{1/1,2/2}{0/0,0/1,2/0,2/1}$. Consequently, in each separable permutation, the number of occurrences of the pattern~$\pattern{scale = 0.6}{2}{1/1,2/2}{0/0,0/1,2/0,2/1}$ corresponds to the number of right-to-left minima minus one. 
	
	According to Theorem~\ref{four-stat}, the distribution of right-to-left minimum on separable permutations is given by the function $G(x,t)$ in~\eqref{eq:four-stat}. Therefore, we have
	\begin{equation}\label{Nr65-expr}
		F(x,t) = \sum_{n \ge 1} x^n \sum_{\pi \in S_n(2413,3142)} t^{\rmin(\pi)-1}
		= \frac{G(x,t)}{t}.
	\end{equation}
	
	By substituting~\eqref{eq:four-stat} into~\eqref{Nr65-expr}, we obtain~\eqref{eq:Nr.65}.
\end{proof}

\subsection{Distributions via reducible and irreducible decomposition}\label{re-irre-decom}
For separable permutations of length $n \ge 2$, each permutation is either reducible or irreducible, as illustrated in Figure~\ref{fig:perm-half-two}, where all four blocks $A_1, A_2, B_1$, and $B_2$ are nonempty. In the reducible case (see Figure~\ref{fig:perm-a}), the block $A_1$ is irreducible (possibly, of length 1), and $A_2$ can be any separable permutation. In the irreducible case (see Figure~\ref{fig:perm-b}), the block $B_1$ is either reducible or has a single (largest) element, while $B_2$ can be any separable permutations. In the proofs of the theorems below, we let $I(x,t)$ and $R(x,t)$ denote, respectively, the generating functions for irreducible and reducible separable permutations, where the variable $t$ records occurrences of the pattern in question.

\begin{figure}[htbp]
	\centering
	
	% ===== (a) =====
	\begin{subfigure}{0.47\textwidth}
		\centering
		\begin{tikzpicture}[line width=0.5pt,scale=0.24,baseline=(bb.center)]
			\path[use as bounding box] (0,-10) rectangle (17,0);
			\coordinate (bb) at (15,-4.5); 
			\coordinate (O) at (0,0);
			\draw [dashed] (1,0)--++(17,0);
			\draw [dashed] (1,-5)--++(17,0);
			\draw (10,-1) rectangle (14,-4);
			\path (12,-2.5) node {$A_1$};
			\draw (4,-6) rectangle (8,-9);
			\path (6,-7.5) node {$A_2$};
			\draw [dashed] (15,0)--++(0,-10);
			\draw [dashed] (9,0)--++(0,-10);
			\draw [dashed] (3,0)--++(0,-10);
		\end{tikzpicture}
		
		\vspace{2mm}
		{\centering\subcaption{Reducible permutation}\label{fig:perm-a}\par}
	\end{subfigure}
	\hfill
	% ===== (b) =====
	\begin{subfigure}{0.47\textwidth}
		\centering
		\begin{tikzpicture}[line width=0.5pt,scale=0.24,baseline=(bb.center)]
			\path[use as bounding box] (0,-10) rectangle (17,0);
			\coordinate (bb) at (15,-4.5);     
			\coordinate (O) at (0,0);
			\draw [dashed] (0,0)--++(17,0);
			\draw [dashed] (0,-5)--++(17,0);
			\draw (4,-1) rectangle (8,-4);
			\path (6,-2.5) node {$B_1$};
			\draw (10,-6) rectangle (14,-9);
			\path (12,-7.5) node {$B_2$};
			\draw [dashed] (15,0)--++(0,-10);
			\draw [dashed] (9,0)--++(0,-10);
			\draw [dashed] (3,0)--++(0,-10);
		\end{tikzpicture}
		
		\vspace{2mm}
		{\centering\subcaption{Irreducible permutation}\label{fig:perm-b}\par}
	\end{subfigure}
	\caption{Structural decomposition of separable permutations of length~$n\ge2$.}
	\label{fig:perm-half-two}
\end{figure}

In this section, we investigate the distributions of eight classes of pairs of mesh patterns whose distributions are difficult to derive through a direct configuration analysis as in the previous section. Instead, by fully exploiting the structural decomposition of separable permutations, we obtain functional equations that give their distributions. The pairs are as follows:
\begin{center}
	\begin{tabular}{rrrrr}
S5 = $\pattern{scale = 0.6}{2}{1/1,2/2}
{0/1,1/0,1/1,1/2,2/1}\pattern{scale = 0.6}{2}{1/2,2/1}{0/1,1/0,1/1,1/2,2/1}$; & \hspace{-0.3cm}
S33 = $\pattern{scale = 0.6}{2}{1/1,2/2}{0/0,0/1,1/1,1/2,2/0,2/1}\pattern{scale = 0.6}{2}{1/2,2/1}{0/0,0/1,1/1,1/2,2/0,2/1}$;&\hspace{-0.3cm}
S37 = $\pattern{scale = 0.6}{2}{1/1,2/2}{0/0,0/1,0/2,2/0,2/2}\pattern{scale = 0.6}{2}{1/2,2/1}{0/0,0/1,0/2,2/0,2/2}$;&\hspace{-0.3cm}
S41 = $\pattern{scale = 0.6}{2}{1/1,2/2}{0/0,0/1,0/2,1/0,1/2}\pattern{scale = 0.6}{2}{1/2,2/1}{0/0,0/1,0/2,1/0,1/2}$;\\[5pt]
S53 = $\pattern{scale = 0.6}{2}{1/1,2/2}{1/1,2/0,2/1,2/2}\pattern{scale = 0.6}{2}{1/2,2/1}{1/1,2/0,2/1,2/2}$;&\hspace{-0.3cm}
S57 = $\pattern{scale = 0.6}{2}{1/1,2/2}{0/0,1/0,1/2,2/0}\pattern{scale = 0.6}{2}{1/2,2/1}{0/0,1/0,1/2,2/0}$;&\hspace{-0.3cm}
S69 = $\pattern{scale = 0.6}{2}{1/1,2/2}{0/0,0/1,1/0,1/2,2/0,2/1}\pattern{scale = 0.6}{2}{1/2,2/1}{0/0,0/1,1/0,1/2,2/0,2/1}$;&\hspace{-0.3cm}
S73 = $\pattern{scale = 0.6}{2}{1/1,2/2}{0/0,0/2,1/0,1/1,1/2} \pattern{scale = 0.6}{2}{1/2,2/1}{0/0,0/2,1/0,1/1,1/2}$;\\[5pt]
S77 = $\pattern{scale = 0.6}{2}{1/1,2/2}{0/0,0/2,1/0,1/2,2/1}\pattern{scale = 0.6}{2}{1/2,2/1}{0/0,0/2,1/0,1/2,2/1}$. 
	\end{tabular}
\end{center}

\begin{thm}[Pair~S5]\label{equ:Nr.5}
We have
	\begin{eqnarray}
	F(x,t) &:=& \sum_{n \ge 1} x^n \sum_{\pi \in S_n(2413,3142)} 
	t^{\hspace{-1mm}\pattern{scale = 0.5}{2}{1/1,2/2}{0/1,1/0,1/1,1/2,2/1}(\pi)}
	= \sum_{n \ge 1} x^n \sum_{\pi \in S_n(2413,3142)} 
	t^{\hspace{-1mm}\pattern{scale = 0.5}{2}{1/2,2/1}{0/1,1/0,1/1,1/2,2/1}(\pi)}\nonumber\\
		&=& \frac{t x - 1 + \sqrt{(t x-1)^2-4x\,(1 + x - t x)}}{2\,(-1 - x + t x)}.\label{eq:Nr.5}
	\end{eqnarray}
\end{thm}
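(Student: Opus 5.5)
The plan is to avoid a recursive decomposition and use a substitution (inflation) argument instead. Write $p$ for the pattern $\pattern{scale = 0.5}{2}{1/1,2/2}{0/1,1/0,1/1,1/2,2/1}$. By the shading, an occurrence of $p$ in $\pi$ is a pair $\pi_i\pi_{i+1}$ with $\pi_{i+1}=\pi_i+1$. The reason is that column~1 is fully shaded, which forces adjacency in position, and row~1 is fully shaded, which forces adjacency in value. So $p(\pi)$ is the number of \emph{successions} of $\pi$. Equidistribution with the $21$-version is already given by the symmetry argument of Table~\ref{tab:sym-1group}, so only the formula needs proof.

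First, every permutation $\pi$ decomposes uniquely into maximal runs of successions, that is, maximal factors of the form $a(a+1)\cdots(a+j-1)$. Each such run is an interval in both positions and values. Collapsing each run to a single point gives a succession-free permutation $\sigma$, and $\pi$ is recovered from $\sigma$ by inflating each entry with an increasing permutation $12\cdots j$. This correspondence is a bijection between permutations and succession-free permutations with a positive integer attached to each entry. Maximality of the runs is exactly the condition that $\sigma$ has no successions: if two collapsed runs were adjacent with consecutive values, they would merge into a single run. A run of length $j$ contributes $j-1$ occurrences of $p$.

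Second, I must check that this bijection preserves separability in both directions. $\sigma$ is a pattern contained in $\pi$, so $\pi$ separable implies $\sigma$ separable. Conversely, $\pi=\sigma[12\cdots j_1,\ldots,12\cdots j_k]$ is obtained from $\sigma$ by repeated $\oplus$-inflations, equivalently by substituting identities, which are separable, into a separable permutation. Separable permutations are closed under substitution, because the basis elements $2413$ and $3142$ are simple, so $\pi$ is separable. This closure fact is the only step needing care, and it is the main point to state cleanly.

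Third, the generating-function computation follows. Let $P(y)$ be the g.f.\ of succession-free separable permutations. Each entry of $\sigma$ is replaced by a run with weight $\sum_{j\ge1}x^jt^{j-1}=x/(1-tx)$, so
\[F(x,t)=P\!\left(\frac{x}{1-tx}\right).\]
Setting $t=1$ gives $F(x)=P(x/(1-x))$, hence $P(y)=F(y/(1+y))$ and
\[F(x,t)=F\!\left(\frac{x}{1+x-tx}\right).\]
Substituting \eqref{eq:separable-gf} and multiplying numerator and denominator by $1+x-tx$ gives
\[F(x,t)=\frac{1-tx-\sqrt{\Delta}}{2(1+x-tx)}.\]
The discriminant simplifies as
\[\Delta=(1+x-tx)^2-6x(1+x-tx)+x^2=(tx-1)^2-4x(1+x-tx).\]
This is exactly \eqref{eq:Nr.5} after changing signs of numerator and denominator.
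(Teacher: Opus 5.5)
Your proof is correct, and it takes a genuinely different route from the paper. The paper works inside the reducible/irreducible decomposition of Figure~\ref{fig:perm-half-two}. It observes that a succession can straddle the two blocks only when the irreducible block $A_1$ is a single point and $A_2$ ends with its maximum. It then introduces an auxiliary series $g(x,t)$ for separable permutations ending with their largest element, derives $F(x,t)=F(x,t)\bigl(F(x,t)+x\bigr)+x(t-1)g(x,t)+x$ with $g(x,t)=x(F(x,t)+1)/(1+x(1-t))$, and solves the resulting quadratic.

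You instead collapse maximal runs of successions. This uses only that the class is closed under taking patterns and under inflating points by increasing permutations, and it yields the explicit identity $F(x,t)=F\bigl(x/(1+x-tx)\bigr)$. Your discriminant simplification is right. As a check, the expansion gives $1+t$ for $n=2$ and $t^2+2t+3$ for $n=3$, which matches direct counting.

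What your approach buys: it is shorter, avoids solving a quadratic and selecting a root, and explains why the answer is just a reparametrization of $F(x)$. It also works verbatim for any pattern class closed under inflating points by increasing permutations, in particular any substitution-closed class. What the paper's approach buys: it is uniform with the other theorems of Section~\ref{re-irre-decom}, where the patterns are not successions and no such collapse is available.

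The one point you flag, closure under inflation, can be settled without the general characterization via simple basis elements. Inflating a point of $\alpha\oplus\beta$ or $\alpha\ominus\beta$ amounts to inflating a point of $\alpha$ or of $\beta$. The base case, inflating $1$ by an increasing permutation, gives $1\oplus\cdots\oplus 1$. So induction on the $\oplus/\ominus$ construction suffices.
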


\begin{proof}
Each ascent $a(a+1)$ formed by consecutive elements contributes exactly one occurrence of the pattern \hspace{-1mm}$\pattern{scale = 0.6}{2}{1/1,2/2}{0/1,1/0,1/1,1/2,2/1}$. 
	
In the irreducible case (see Figure~\ref{fig:perm-b}), the  pattern $\pattern{scale = 0.6}{2}{1/1,2/2}{0/1,1/0,1/1,1/2,2/1}$ cannot occur across blocks $B_1$ and $B_2$. Moreover, any occurrence of the pattern inside $B_1$ or $B_2$ is an occurrence of the pattern in the entire permutation. Consequently, $I(x,t)$ can be expressed as
\begin{equation}\label{Nr5-expr-A}
		I(x,t) = x+(x+R(x,t))\,F(x,t),
\end{equation}
where $x$ corresponds to the permutation 1, $F(x,t)$ corresponds to the block $B_2$, and the factor $x+R(x,t)$ represents the g.f.\ for the block $B_1$.

In the reducible case (see Figure~\ref{fig:perm-a}), the shadings of the pattern
\hspace{-1mm}$\pattern{scale = 0.6}{2}{1/1,2/2}{0/1,1/0,1/1,1/2,2/1}$
ensure that any occurrence of the pattern within either block $A_1$ or $A_2$
is also an occurrence of the pattern in the entire permutation.
In addition, if $A_1$ has a single element and $A_2$ ends with the largest element,
then an extra occurrence of the pattern arises.
If $g(x,t)$ denotes the distribution of
\hspace{-1mm}$\pattern{scale = 0.6}{2}{1/1,2/2}{0/1,1/0,1/1,1/2,2/1}$
on separable permutations ending with the largest element, then in the reducible case we have
\begin{equation}\label{Nr5-expr-C}
	\begin{aligned}
	R(x,t) &= (I(x,t)-x)F(x,t)+xtg(x,t)+x(F(x,t)-g(x,t)) 
	\end{aligned}
\end{equation}
where $I(x,t)-x$ records irreducible permutations of length at least~2, and
$F(x,t)-g(x,t)$ is the distribution of the pattern on separable permutations not ending with the largest element. Now, we have
$$g(x,t) = x + xtg(x,t)+x(F(x,t)-g(x,t)),$$
where the terms correspond, respectively, to the permutation 1, the separable permutations of length $n\geq 2$ ending with $(n-1)n$, and  the separable permutations of length $n\geq 2$ ending with $an$ for $a\neq n-1$. Therefore,
\begin{equation}\label{Nr5-expr-F}
	g(x,t) = \frac{x\,(F(x,t)+1)}{1 + x(1- t) }.
\end{equation}
Finally, using (\ref{Nr5-expr-A}) and (\ref{Nr5-expr-C}), we have
	\begin{equation}\label{Nr5-expr}
		\begin{aligned}
			F(x,t) &= I(x,t) + R(x,t) \\
			&= F(x,t)\big(I(x,t) + R(x,t)\big) +xF(x,t) + x t g(x,t) - x g(x,t) + x \\
			&= F(x,t)\big(F(x,t) + x\big) + x (t-1) g(x,t) + x.
		\end{aligned}
	\end{equation}
By substituting \eqref{Nr5-expr-F} into \eqref{Nr5-expr} and solving for $F(x,t)$, we obtain~\eqref{eq:Nr.5}. 
\end{proof}

\begin{thm}[Pairs~S33--S36]\label{equ:Nr.33}
	We have
	\begin{eqnarray}
	F(x,t) &:=& \sum_{n \ge 1} x^n \sum_{\pi \in S_n(2413,3142)} 
	t^{\hspace{-1mm}\pattern{scale = 0.5}{2}{1/1,2/2}{0/0,0/1,1/1,1/2,2/0,2/1}(\pi)}
	= \sum_{n \ge 1} x^n \sum_{\pi \in S_n(2413,3142)} 
	t^{\hspace{-1mm}\pattern{scale = 0.5}{2}{1/2,2/1}{0/0,0/1,1/1,1/2,2/0,2/1}(\pi)}\nonumber \\ 
		&=& \frac{2x\,(1+(t-1)\,x)}{1-x-2\,(t-1)\,x^{2}+\sqrt{1-6x+x^{2}}}. \label{eq:Nr.33}
	\end{eqnarray}
\end{thm}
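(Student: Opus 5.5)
The plan is to first read off what an occurrence of the pattern $\pattern{scale = 0.6}{2}{1/1,2/2}{0/0,0/1,1/1,1/2,2/0,2/1}$ means. Then I will set up linear equations using the reducible/irreducible decomposition of Figure~\ref{fig:perm-half-two}, with $I(x,t)$ and $R(x,t)$ as in Section~\ref{re-irre-decom}. By the shading, a pair $ab$ with $a<b$ and $a$ to the left of $b$ is an occurrence if and only if $\pi=L\,a\,M\,b\,R'$ with three conditions: every element of $L$ exceeds $b$, every element of $M$ is smaller than $a$, and every element of $R'$ exceeds $b$. The equidistribution with the $21$-version is already given by symmetry (Table~\ref{tab:sym-4group}), so only the $12$-pattern needs treatment.

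\textbf{Irreducible case} ($\pi=B_1\ominus B_2$, where $B_1$ is a single element or is reducible). I will check three facts.
\begin{itemize}
\item An occurrence lying inside $B_2$ remains an occurrence in $\pi$, because all of $B_1$ lies to the left and is larger.
\item An occurrence inside $B_1$ is destroyed, because $B_2$ lies to the right of $b$ and is smaller than $b$.
\item An occurrence using both blocks is impossible, since $a\in B_1$ would exceed $b\in B_2$.
\end{itemize}
Hence
$$I(x,t)=x+\Big(x+\tfrac{F(x)-x}{2}\Big)F(x,t),$$
where $B_1$ is counted without weight and Lemma~\ref{half-first-last}-type symmetry gives that reducible separable permutations have g.f.\ $(F(x)-x)/2$.

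\textbf{Reducible case} ($\pi=A_1\oplus A_2$, where $A_1$ is the irreducible first component). I will check three facts here as well.
\begin{itemize}
\item Occurrences inside $A_1$ survive, because $A_2$ lies to the right and is larger.
\item Occurrences inside $A_2$ all die, because the nonempty $A_1$ lies to the left of $a$ and is smaller than $b$.
\item A cross occurrence with $a\in A_1$ and $b\in A_2$ is the delicate point. Nothing may lie left of $a$, since it would have to exceed $b$, which is impossible for elements of $A_1$. So $a$ is the first element of $A_1$. Everything between $a$ and $b$ must be smaller than $a$, and no element of $A_2$ can satisfy this. So $b$ is the first element of $A_2$, and $A_1$ starts with its maximum. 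Finally, $R'$ exceeding $b$ forces $A_2$ to start with its minimum.
\end{itemize}
So there is exactly one extra occurrence precisely when $A_1$ starts with its maximum and $A_2$ starts with its minimum.

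Two auxiliary counts are needed. Irreducible permutations starting with their maximum are exactly those with $B_1$ of length one, so by the irreducible analysis their weighted g.f.\ is $x+xF(x,t)$. Nonempty separable permutations starting with their minimum are counted without weight by $x(F(x)+1)$. This gives
$$R(x,t)=I(x,t)F(x)+(t-1)\,x\big(1+F(x,t)\big)\,x\big(1+F(x)\big).$$

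Finally, I will substitute everything into $F(x,t)=I(x,t)+R(x,t)$. The result is a \emph{linear} equation in $F(x,t)$, and solving it with \eqref{eq:separable-gf} should simplify to \eqref{eq:Nr.33}. As a check, the expression must reduce to $F(x)$ at $t=1$. The main obstacle I expect is the careful verification of the cross-occurrence analysis in the reducible case, and in particular making sure that no occurrence inside $A_2$ survives. After that, I expect the algebraic simplification of the square-root expression into the stated compact form to take some care but to be routine.
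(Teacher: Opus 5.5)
Your proof is correct. It matches the paper in the irreducible case: your $x+\frac{F(x)-x}{2}$ is the paper's $\frac{F(x)+x}{2}$. In the reducible case, however, you decompose differently.

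\textbf{The paper's route.} In Figure~\ref{fig:perm-a}, $A_1$ is the \emph{last} irreducible component (upper right) and $A_2$ is everything to its lower left. Then occurrences inside $A_1$ die, occurrences inside $A_2$ survive, and the single extra occurrence arises exactly when $|A_1|=1$ and $A_2$ begins with its maximum. This gives directly
\[
F(x,t)=x+\bigl(F(x)+x\bigr)F(x,t)+(t-1)x^2\bigl(F(x,t)+1\bigr),
\]
and \eqref{eq:Nr.33} follows by plain substitution of \eqref{eq:separable-gf}.

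\textbf{Your route.} You peel off the \emph{first} component instead. So the weight sits on the irreducible factor, $R$ is expressed through $I$, and the cross condition becomes ``$A_1$ starts with its maximum and $A_2$ starts with its minimum.'' This requires your extra count $x\bigl(1+F(x,t)\bigr)$ of irreducibles starting with their maximum, which is correct. With $F=F(x)$, your equation is
\[
F(x,t)=(1+F)\Bigl[x+\tfrac{F+x}{2}F(x,t)+(t-1)x^2\bigl(1+F(x,t)\bigr)\Bigr].
\]

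\textbf{How the two compare.} Your equation is equivalent to the paper's only modulo the quadratic $F^2-(1-x)F+x=0$. Using the identities $(1+F)(F+x)=2F$ and $\frac{1-F}{1+F}=1-F-x$, it collapses exactly to the paper's equation. So the paper's choice of which component to split off yields an equation that needs no algebraic identity for $F$. Yours is equally valid but needs this extra step before you reach the compact form. Both give $x+(t+1)x^2+(3t+3)x^3+\cdots$, which agrees with direct enumeration for $n\le 3$.

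One labeling remark: your $A_1,A_2$ in the reducible case are the mirror image of those in Figure~\ref{fig:perm-a}. You should therefore not cite that figure for your reducible case.
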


\begin{proof}
In the irreducible case (see Figure~\ref{fig:perm-b}), the shading of the lower-right corner of the pattern
\hspace{-1mm}$\pattern{scale = 0.6}{2}{1/1,2/2}{0/0,0/1,1/1,1/2,2/0,2/1}$
prevents this pattern from occurring in $B_1$.
On the other hand, any occurrence of the pattern in $B_2$ is clearly an occurrence in the entire permutation,
and the pattern cannot occur across $B_1$ and $B_2$.
Therefore, also considering the permutation 1, we have
\begin{equation}\label{Nr33-expr-A}
	I(x,t) = x + F(x,t)\frac{F(x) + x}{2}.
\end{equation}
Here, the factor $F(x,t)$ corresponds to the block $B_2$, while the factor $\frac{F(x) + x}{2}$
represents the generating function for reducible separable permutations corresponding to the block $B_1$,
bearing in mind that $B_1$ may have length~1.
	
	In the reducible case (see Figure~\ref{fig:perm-a}), the shading of the lower-left corner of the pattern
\hspace{-1mm}$\pattern{scale = 0.6}{2}{1/1,2/2}{0/0,0/1,1/1,1/2,2/0,2/1}$
prevents this pattern from occurring in $A_1$. On the other hand, any occurrence of the pattern in $A_2$ is clearly an occurrence in the entire permutation. Additional occurrences of the pattern can only arise in the situation when $A_2$ begins with its largest element $a$, and $A_1$ is of length 1 (that is, $A_1=a+1$ and $a(a+1)$ is an occurrence of \hspace{-1mm}$\pattern{scale = 0.6}{2}{1/1,2/2}{0/0,0/1,1/1,1/2,2/0,2/1}$). Since the g.f.\ for $A_2$ beginning (resp., not beginning) with the largest element is given by $x(F(x,t)+1)$ (resp., $F(x,t)-x(F(x,t)+1)$), and the g.f.\ for irreducible separable permutations of length at least 2 is $\frac{F(x)+x}{2}-x$, we have
	\begin{equation}\label{Nr33-expr-C}
		R(x,t) = F(x,t)\frac{F(x)-x}{2} + tx^2(F(x,t)+1) + x(F(x,t)-x(F(x,t)+1)),
	\end{equation}
where the first term corresponds to the case of the irreducible $A_1$ being of length at least 2. Now using  \eqref{Nr33-expr-A} and \eqref{Nr33-expr-C}, we obtain
	\begin{eqnarray}\label{Nr33-expr}
		F(x,t)
		&=& I(x,t)+R(x,t)\nonumber \\
		&=& \bigl(F(x) + x\bigr) F(x,t) + (t - 1)\, x^2 \,(F(x,t) + 1) + x
	\end{eqnarray}
	
By substituting~\eqref{eq:separable-gf} in (\ref{Nr33-expr}), we obtain~\eqref{eq:Nr.33}.
\end{proof}

\begin{thm}[Pairs~S37--S40]\label{equ:Nr.41}
We have
	\begin{eqnarray}
	F(x,t) &:=& \sum_{n \ge 1} x^n \sum_{\pi \in S_n(2413,3142)} 
	t^{\hspace{-1mm}\pattern{scale = 0.5}{2}{1/1,2/2}{0/0,0/1,0/2,2/0,2/2}(\pi)}
	= \sum_{n \ge 1} x^n \sum_{\pi \in S_n(2413,3142)} 
	t^{\hspace{-1mm}\pattern{scale = 0.5}{2}{1/2,2/1}{0/0,0/1,0/2,2/0,2/2}(\pi)}\nonumber
	\end{eqnarray}
\begin{equation}\label{eq:Nr.41}
=\frac{1}{8}\Bigl(
		5 + x(1 - 4t)
		- \sqrt{\,1 - 6x + x^{2}\,}
		- \sqrt{2}\,
		\sqrt{
			5 + (1 - 4t + 8t^{2})\,x^{2}
			+ f_1(x,t)
		}
		\Bigr)
		\end{equation}
		where
		\begin{equation}
		\begin{aligned}
			f_1(x,t)&
			= 3\sqrt{\,1 - 6x + x^{2}\,}-x\Bigl(
			6 + \sqrt{\,1 - 6x + x^{2}\,}
			+ 4t\bigl(5 - \sqrt{\,1 - 6x + x^{2}\,}\bigr)
			\Bigr).\nonumber
		\end{aligned}
		\end{equation}
\end{thm}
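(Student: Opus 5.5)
The plan is to describe occurrences of the pattern $\pattern{scale = 0.6}{2}{1/1,2/2}{0/0,0/1,0/2,2/0,2/2}$ directly and then count them with the reducible/irreducible decomposition of Figure~\ref{fig:perm-half-two}. The other three patterns in S37--S40, and the partner $21$-patterns, then follow by the symmetries already recorded in Table~\ref{tab:sym-4group}.

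\textbf{Step 1: describe an occurrence.} Let $ab$ be an occurrence in $\pi=\pi_1\cdots\pi_n$.
\begin{itemize}
\item Column $0$ is fully shaded, so nothing lies to the left of $a$; hence $a=\pi_1$.
\item Boxes $(2,0)$ and $(2,2)$ are shaded, so every element to the right of $b$ has value strictly between $a$ and $b$.
\item Column $1$ is unshaded, so there is no condition on the elements between $a$ and $b$.
\end{itemize}
Thus the occurrences are exactly the elements $b>\pi_1$ for which all later entries lie in the interval $(\pi_1,b)$.

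\textbf{Step 2: irreducible permutations.} Write $\pi$ in the form of Figure~\ref{fig:perm-b}, where $B_1$ lies above $B_2$ and $B_2\neq\varepsilon$. Then $\pi_1\in B_1$.
\begin{itemize}
\item Any candidate $b>\pi_1$ must lie in $B_1$.
\item Every element of $B_2$ comes after such a $b$ and is smaller than $\pi_1$, which violates the condition in Step~1.
\end{itemize}
So irreducible permutations of length at least $2$ contain no occurrence. The permutation $1$ has none either. Hence $I(x,t)=\frac{F(x)+x}{2}$, with no dependence on $t$; this uses Lemma~\ref{half-first-last}, or equivalently Table~\ref{tab-dist-rmax-thm} at $t=1$.

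\textbf{Step 3: reducible permutations.} Write $\pi=A_1\oplus A_2$ as in Figure~\ref{fig:perm-a}, with $A_1$ irreducible (possibly of length $1$) and $A_2$ nonempty. Then $\pi_1\in A_1$.
\begin{itemize}
\item If $b\in A_1$, then $b$ is followed by all of $A_2$, whose entries exceed $b$. So no occurrence has $b\in A_1$.
\item If $b\in A_2$, the entries after $b$ lie in $A_2$ and are automatically larger than $\pi_1$. The condition reduces to: all of them are smaller than $b$.
\end{itemize}
So the occurrences are exactly the right-to-left maxima of $A_2$, and the number of occurrences is $\rmax(A_2)$. This number is independent of $A_1$. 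Theorem~\ref{four-stat} then gives
$$R(x,t)=\frac{F(x)+x}{2}\,G(x,t).$$

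\textbf{Step 4: combine.} We obtain
$$F(x,t)=\frac{F(x)+x}{2}\bigl(1+G(x,t)\bigr).$$
Substituting \eqref{eq:separable-gf} and \eqref{eq:four-stat} should yield \eqref{eq:Nr.41}.

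\textbf{Main obstacle.} The combinatorics above is short. The work lies in the algebraic simplification: the nested radical in $G$ must be massaged into the form $\sqrt{2}\sqrt{5+(1-4t+8t^2)x^2+f_1(x,t)}$. I would do this by rationalizing the denominator $\sqrt{1-6x+x^2}-x-1$, or by verifying the identity with computer algebra and a series check.
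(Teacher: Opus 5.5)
Your proof is correct. It follows the paper's outline: irreducible permutations contain no occurrences, so $I(x,t)=\frac{F(x)+x}{2}$, and in the reducible case occurrences are right-to-left maxima. The difference is that you split reducible permutations from the other end.

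In Figure~\ref{fig:perm-a} the irreducible block $A_1$ is the upper-right, i.e.\ last, component. So your phrase ``as in Figure~\ref{fig:perm-a}'' swaps the roles of $A_1$ and $A_2$. Your decomposition $\pi=A_1\oplus A_2$, with $A_1$ the first irreducible component, is nonetheless a valid bijection onto reducible separable permutations.

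\emph{The paper's route.} With the irreducible block last, occurrences are the right-to-left maxima of that block, hence of $\pi$. The paper then quotes the refined reducible entry $\frac{G(x,t)}{1+G(x,t)}-xt$ from Table~\ref{tab-dist-rmax-thm}.

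\emph{Your route.} The statistic sits on the unrestricted block $A_2$, so you need only $G$ itself and get $R(x,t)=\frac{F(x)+x}{2}\,G(x,t)$. This is the same series as the paper's, since both count reducible separable permutations by $\rmax$.

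\emph{The algebra.} Your form also removes the ``main obstacle.'' Since $\frac{F(x)+x}{2}=-\frac14\bigl(\sqrt{1-6x+x^2}-x-1\bigr)$, the prefactor in $F(x,t)=\frac{F(x)+x}{2}\bigl(1+G(x,t)\bigr)$ cancels the denominator of \eqref{eq:four-stat} up to a constant. This gives $F(x,t)=\frac18\bigl(5+x(1-4t)-\sqrt{1-6x+x^2}-4\sqrt{Q}\bigr)$, where $Q$ is the radicand in \eqref{eq:four-stat}. Moreover, $16Q=\bigl(5+x(1-4t)-\sqrt{1-6x+x^2}\bigr)^2-16\bigl(1+x-\sqrt{1-6x+x^2}\bigr)=2\bigl(5+(1-4t+8t^2)x^2+f_1(x,t)\bigr)$. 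That is exactly \eqref{eq:Nr.41}, with no rationalization or computer check needed. The paper's sum $\frac{F(x)+x}{2}+\frac{G}{1+G}-xt$ hides this simplification.
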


\begin{proof}
 Any occurrence of \hspace{-1mm}$\pattern{scale = 0.6}{2}{1/1,2/2}{0/0,0/1,0/2,2/0,2/2}$ must involve the leftmost element in a given permutation. 
 	
	In the irreducible case (see Figure~\ref{fig:perm-b}), we cannot have any occurrences of the pattern because of the shaded lower-right and upper-left regions (and clearly no occurrences can happen across $B_1$ and $B_2$). Therefore,  also considering the permutation 1, we have
	\begin{equation}\label{Nr41-expr-A}
		I(x,t) = \frac{F(x) + x}{2}.
	\end{equation}
	
In the reducible case (see Figure~\ref{fig:perm-a}), if $ab$ is an occurrence of the pattern
\hspace{-1mm}$\pattern{scale = 0.6}{2}{1/1,2/2}{0/0,0/1,0/2,2/0,2/2}$,
then $a$ is the leftmost element in $A_2$ and $b$ must lie in $A_1$
(because the upper-right region of the pattern is shaded).
Moreover, $b$ must be a right-to-left maximum in $A_1$, and conversely,
any right-to-left maximum in $A_1$, together with the leftmost element in $A_2$,
forms an occurrence of the pattern.
Since all elements of $A_1$ are greater than those of $A_2$,
the right-to-left maxima in $A_1$ are exactly the right-to-left maxima
of the entire permutation.
Consequently, the number of occurrences of the pattern
$\pattern{scale = 0.5}{2}{1/1,2/2}{0/0,0/1,0/2,2/0,2/2}$
is equal to the number of right-to-left maxima in the entire permutation.
Therefore, the distribution of reducible separable permutations (of length $n \ge 2$)
with respect to this pattern coincides with that of the right-to-left maximum statistic. By Theorem~\ref{four-stat}, this distribution is 
	\begin{equation} \label{Nr41-expr-C}
		R(x,t)=\frac{G(x,t)}{1 + G(x,t)} - xt,
	\end{equation}
	where $G(x,t)$ is given by~\eqref{eq:four-stat}. 
	
Finally, to obtain~\eqref{eq:Nr.41}, we substitute \eqref{Nr41-expr-A} and \eqref{Nr41-expr-C} into
$F(x,t) = I(x,t) + R(x,t)$, and then substitute~\eqref{eq:separable-gf} and \eqref{eq:four-stat}.
\end{proof}

\begin{thm}[Pairs~S41--S44]\label{equ:Nr.37}
The formula in Theorem~\ref{equ:Nr.41} holds for the pairs S41--S44.
\end{thm}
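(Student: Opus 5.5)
The plan is to identify the statistic counted by S41 with a simple one, and to get its generating function from the reducible/irreducible decomposition of Figure~\ref{fig:perm-half-two}. Then we check that it coincides with the formula of Theorem~\ref{equ:Nr.41}. By the symmetry classes of Table~\ref{tab:sym-4group} it suffices to treat the pattern \hspace{-1mm}$\pattern{scale = 0.6}{2}{1/1,2/2}{0/0,0/1,0/2,1/0,1/2}$.

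\textbf{Step 1: a direct description.} The shaded column $0$ forces the first entry of an occurrence $ab$ to be $a=\pi_1$. The shaded boxes $(1,0)$ and $(1,2)$ force every entry strictly between $a$ and $b$ to lie in the value interval $(a,b)$. Hence $b$ is exactly a left-to-right maximum of $\pi$ that precedes the first entry smaller than $\pi_1$. So the number of occurrences equals $\lmax(P)-1$, where $P$ is the longest prefix of $\pi$ all of whose entries are $\ge \pi_1$. Note that this differs from S37 already on small cases: for example, $231$ contains one occurrence of S41 but none of S37. So the two counts are not equal permutation by permutation, and the equidistribution must come from the generating functions, or possibly from a bijection that preserves separability.

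\textbf{Step 2: functional equations.} We use the decomposition of Section~\ref{re-irre-decom}.
\begin{itemize}
\item \emph{Irreducible $\pi$ of length at least $2$.} Here $\pi_1\in B_1$ and every entry of $B_2$ is smaller than $\pi_1$. So the prefix $P$ stays inside $B_1$, and the statistic of $\pi$ equals that of $B_1$, which is $1$ or reducible. This gives
\[
I(x,t)=x+(x+R(x,t))F(x).
\]
\item \emph{Reducible $\pi$.} Here $\pi_1\in A_2$ and $A_1$ lies entirely above $A_2$. If $A_2$ contains an entry below its first entry, the statistic equals that of $A_2$. Otherwise $A_2$ begins with its minimum, the prefix runs through all of $A_2$, and it then continues through all of $A_1$. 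In that case the count is $\lmax(A_2)-1+\lmax(A_1)$.
\end{itemize}
So we need the g.f.\ $H(x,t)$ of separable permutations beginning with their smallest entry, weighted by $t^{\lmax-1}$. This equals $x$ times the $\lmax$-distribution of the remainder, since the first entry $1$ is below everything. Thus $H$ is expressible through $G(x,t)$ of Theorem~\ref{four-stat}, and the $\lmax$-distribution of the irreducible block $A_1$ comes from Table~\ref{tab-dist-rmax-thm}. Write $\widehat{I}(x,t)$ for the $t^{\lmax}$-generating function of irreducible separable permutations; by Table~\ref{tab-dist-rmax-thm} it equals $\frac{G(x,t)}{1+G(x,t)}-xt$ plus the length-$1$ term, to be read off carefully. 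This gives
\[
R(x,t)=\bigl(F(x,t)-H(x,t)\bigr)\bigl(I(x,1)\bigr)+H(x,t)\,\widehat{I}(x,t).
\]

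\textbf{Step 3: solve and compare.} We substitute $F(x,t)=I+R$, solve the resulting (linear) equation for $F(x,t)$, insert \eqref{eq:separable-gf} and \eqref{eq:four-stat}, and compare with \eqref{eq:Nr.41}, i.e.\ with $\frac{F(x)+x}{2}+\frac{G(x,t)}{1+G(x,t)}-xt$.

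I expect the main obstacle to be this final identification. It is an algebraic identity among expressions involving nested radicals from $G$. I would first verify it by series expansion to a moderate order. Then I would try to reduce it to an identity between $G$, $\frac{G}{1+G}$ and $F$, using the quadratic (or quartic) relation that $G$ satisfies, rather than comparing radicals directly. If that becomes unwieldy, I would look instead for a bijection, built from the $\oplus/\ominus$ decomposition tree, sending the prefix-$\lmax$ statistic of S41 to the $\rmax$-on-reducibles statistic of S37.
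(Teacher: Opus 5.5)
Your proposal is correct and is essentially the paper's proof. It uses the same irreducible equation $I(x,t)=x+(R(x,t)+x)F(x)$ and the same reducible split according to whether $A_2$ begins with its minimum; your term $H\,\widehat{I}=x(1+G)\cdot\frac{G}{1+G}$ is exactly the paper's $xG(x,t)$, which the paper reads off directly because $A_2'A_1$ is an arbitrary separable permutation. The final identification is also easier than you expect: the equation solves to $F(x,t)=\frac{x(1+G)}{1-F(x)}$, and the $\lmax$ analogue of the irreducible decomposition gives $\frac{G}{1+G}-xt=\frac{F(x)\,G}{1+F(x)}$, so both this and the target $\frac{F(x)+x}{2}+\frac{G}{1+G}-xt$ reduce to $\frac{F(x)(1+G)}{1+F(x)}$ via $F(x)^2+(x-1)F(x)+x=0$.
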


\begin{proof}
Any occurrence of \hspace{-1mm}$\pattern{scale = 0.6}{2}{1/1,2/2}{0/0,0/1,0/2,1/0,1/2}$ must involve the leftmost element in a given permutation. 
	
In the irreducible case (see Figure~\ref{fig:perm-b}), it is clear that if $ab$ is an occurrence of \hspace{-1mm}$\pattern{scale = 0.6}{2}{1/1,2/2}{0/0,0/1,0/2,1/0,1/2}$, then both $a$ and $b$ must belong to $B_1$, so that $B_2$ cannot contribute any occurrences of the pattern. Therefore, also considering the permutation 1,
\begin{equation}\label{Nr37-expr-A}
	I(x,t) = x+(R(x,t)+x)\,F(x),
\end{equation}
where the factor $F(x)$ corresponds to the block $B_2$.
		
    In the reducible case (see Figure~\ref{fig:perm-a}), the leftmost element in the permutation is in block $A_2$, and hence the pattern cannot be found within $A_1$. We consider two subcases:
    \begin{itemize}
    \item The block $A_2$ begins with the smallest element. That is, $A_2 = 1A'_2$, as shown in Figure~\ref{fig:Nr.37}. Then the element~$1$, together with every left-to-right maximum in the remainder of the permutation, forms an occurrence of \hspace{-1mm}$\pattern{scale = 0.6}{2}{1/1,2/2}{0/0,0/1,0/2,1/0,1/2}$. The g.f.\ for this subcase is therefore clearly $xG(x,t)$, where $G(x,t)$ is given by Theorem~\ref{four-stat}. Indeed, the distribution in question coincides with the distribution of left-to-right maxima in $A'_2A_1$, which can be any separable permutation.
    \item The block $A_2$ does not start with the smallest element. Then every occurrence of \hspace{-1mm}$\pattern{scale = 0.6}{2}{1/1,2/2}{0/0,0/1,0/2,1/0,1/2}$, if any, must be entirely in $A_2$, and the number of choices for $A_1$ are given by $\frac{F(x) + x}{2}$. Therefore, since $A'_2$ can be empty, in this subcase, the g.f.\ is    	$$\bigl(F(x,t)-x\left(1+G(x,t)\right)\bigl)\frac{F(x)+x}{2}.$$
    \end{itemize}
 Consequently, we obtain $$R(x,t) = xG(x,t)+\bigl(F(x,t)-x\left(1+G(x,t)\right)\bigl)\frac{F(x)+x}{2}$$
		and
	\begin{eqnarray}\label{Nr37-expr-E}
		F(x,t) &=& I(x,t) + R(x,t)  \nonumber \\
		&=& \bigl(1 + F(x)\bigr) \Bigl( x + x G(x,t) + \frac{F(x)+x}{2} \bigl(F(x,t) - x (G(x,t)+1)\bigr) \Bigr).  
	\end{eqnarray}
	Solving \eqref{Nr37-expr-E} for $F(x,t)$ and substituting into the result \eqref{eq:separable-gf} and~\eqref{eq:four-stat} yields~\eqref{eq:Nr.41}.
\end{proof}

\begin{thm}[Pairs~S53--S56]\label{equ:Nr.53}
We have
\begin{eqnarray}
	F(x,t) &:=& \sum_{n \ge 1} x^n \sum_{\pi \in S_n(2413,3142)} 
	t^{\hspace{-1mm}\pattern{scale = 0.5}{2}{1/1,2/2}{1/1,2/0,2/1,2/2}(\pi)}
	= \sum_{n \ge 1} x^n \sum_{\pi \in S_n(2413,3142)} 
	t^{\hspace{-1mm}\pattern{scale = 0.5}{2}{1/2,2/1}{1/1,2/0,2/1,2/2}(\pi)}\nonumber\\
			&=& \frac{
				3 - x(1+4t) + \sqrt{1 - 6x + x^2}
				- 4 \, f(x,t)
			}{
				2t\,\left(1 + x - \sqrt{1 - 6x + x^2}\right)
			},\label{eq:Nr.53}
	\end{eqnarray}
	where
	\begin{equation}
		\begin{aligned}
			f(x,t)
			= \sqrt{ -1 - x + \sqrt{1 - 6x + x^2}
				+ \tfrac{1}{16}\!\left(-5 + (4t-1)x + \sqrt{1 - 6x + x^2}\right)^{2}}.\nonumber
		\end{aligned}
	\end{equation}
\end{thm}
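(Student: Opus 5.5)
The plan is to turn the pattern into a statistic that the structural decomposition of Figure~\ref{fig:perm-half-two} handles well, and then show it has the same distribution as $\rmin-1$. That would make the answer $G(x,t)/t$, exactly as in Theorem~\ref{equ:Nr.65}. The right-hand side of \eqref{eq:Nr.53} is, up to rewriting $-x-4tx$ as $-x(1+4t)$, the same expression as \eqref{eq:Nr.65}, so this is the target. By the symmetries of Table~\ref{tab:sym-4group} it suffices to treat $p=\pattern{scale = 0.5}{2}{1/1,2/2}{1/1,2/0,2/1,2/2}$.

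\textbf{Step 1 (what $p$ counts).} Let $ab$ be an occurrence of $p$ in $\pi=\pi_1\cdots\pi_n$. The shaded column $2$ forces $b=\pi_n$. The shaded box $(1,1)$ says that no element strictly between $a$ and $b$ in position has value in $(a,b)$. Hence the occurrences correspond exactly to the right-to-left maxima of the subsequence $\sigma(\pi)$ of $\pi_1\cdots\pi_{n-1}$ formed by the elements smaller than $\pi_n$. Write $s(\pi)$ for their number; then $p(\pi)=s(\pi)$, with $s(\pi)=0$ when $\sigma(\pi)$ is empty.

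\textbf{Step 2 (decomposition).} Let $I(x,t)$ and $R(x,t)$ be as in Section~\ref{re-irre-decom}, now recording $s$, and split according to Figure~\ref{fig:perm-half-two}.
\begin{itemize}
\item[(i)] \emph{Irreducible case $\pi=B_1\ominus B_2$.} Here $\pi_n$ lies in $B_2$ and all of $B_1$ lies above it, so $s(\pi)=s(B_2)$. Since $B_1$ is reducible or a single element, this gives
\[
I(x,t)=x+\frac{F(x)+x}{2}\,F(x,t).
\]
\item[(ii)] \emph{Reducible case $\pi=A_2\oplus A_1$, with $A_1$ irreducible.} Here $\pi_n$ is the last element of $A_1$, and every element of $A_2$ is smaller than $\pi_n$ and lies to the left of $A_1$. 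So $\sigma(\pi)=A_2\,\sigma(A_1)$ with $A_2<\sigma(A_1)$. This gives
\[
s(\pi)=s(A_1)+\rmax(A_2)\,[\sigma(A_1)=\varepsilon].
\]
Now $\sigma(A_1)=\varepsilon$ means that $A_1$ ends with its smallest element. An irreducible separable permutation does this iff $A_1=1$ or $A_1=C\ominus 1$ with $C$ nonempty; the g.f.\ for these is $x(F(x)+1)$.
\end{itemize}
Therefore
\[
R(x,t)=G(x,t)\,x\,(F(x)+1)+F(x)\Bigl(I(x,t)-x(F(x)+1)\Bigr),
\]
where $G(x,t)$ is given by Theorem~\ref{four-stat} and counts $\rmax$ on $A_2$. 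In the second term $I(x,t)-x(F(x)+1)$ records the irreducible $A_1$ with $\sigma(A_1)\neq\varepsilon$, and $A_2$ contributes $F(x)$.

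\textbf{Step 3 (solving).} Substitute $I(x,t)$ into $F(x,t)=I(x,t)+R(x,t)$. The result is linear in $F(x,t)$, with coefficients built from $F(x)$ and $G(x,t)$, so it can be solved directly. Then substitute \eqref{eq:separable-gf} and \eqref{eq:four-stat}.

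\textbf{Main obstacle.} I expect the hard part to be the bookkeeping in Step~2(ii): identifying correctly when $A_2$ contributes its right-to-left maxima, and checking that $\rmax(A_2)$ is taken with no extra conditions. After that, the remaining difficulty is the algebraic verification that the solution equals $G(x,t)/t$. For that I would try to avoid the nested radicals. I would use the identity $G=\frac{G^2}{1+G}+\frac{G}{1+G}$ from Table~\ref{tab-dist-rmax-thm} to reduce the linear equation to an identity in $G$ and $F$, and check it symbolically, including a comparison of the first few coefficients. If that fails, the fallback is a direct bijection showing $s(\pi)$ is equidistributed with $\rmin(\pi)-1$, by induction on the same decomposition.
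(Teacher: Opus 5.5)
\textbf{The gap is in Step~3, and it cannot be closed because its target is false.} Your Steps~1 and~2 are correct and coincide with the paper's decomposition. Your reducible-case formula $R(x,t)=x(F(x)+1)G(x,t)+F(x)\bigl(I(x,t)-x(F(x)+1)\bigr)$ is in fact the correctly bookkept version of the paper's, and it leads to the same final equation \eqref{Nr53-expr}.

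Write $F=F(x)$ and use $F^2=(1-x)F-x$. This gives $(1+F)(F+x)=2F$ and $x(1+F)=F(1-F)$. Your equation is $F(x,t)=(1+F)I(x,t)+x(1+F)\bigl(G(x,t)-F\bigr)$ with $I(x,t)=x+\tfrac{F+x}{2}F(x,t)$. It collapses to $(1-F)F(x,t)=F(1-F)\bigl(1+G(x,t)-F\bigr)$, that is,
\[
F(x,t)=F(x)\bigl(1+G(x,t)-F(x)\bigr).
\]
Take $F=x+2x^2+6x^3+22x^4+\cdots$ and $G=tx+(t+t^2)x^2+(2t+3t^2+t^3)x^3+\cdots$. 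The coefficient of $x^4$ in this solution is $6+10t+5t^2+t^3$. By contrast, $G(x,t)/t$ has coefficient $6+9t+6t^2+t^3$, since $\rmin$ on the 22 separable permutations of length~4 is distributed as $6t+9t^2+6t^3+t^4$. As you observed, $G(x,t)/t$ is exactly the displayed right-hand side, identical to \eqref{eq:Nr.65}.

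Your own description of $s(\pi)$ confirms this directly. Among separable permutations of length~4, exactly one occurrence happens for ten permutations: $1234, 2134, 1243, 1423, 4123, 1342, 1432, 3412, 4132, 4312$. By contrast, $\rmin(\pi)-1=1$ holds for only nine: $1342, 1432, 2143, 2314, 3214, 3412, 4132, 4213, 4312$. So neither the symbolic verification nor the fallback bijection with $\rmin-1$ can exist.

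The paper's proof breaks at the same point. Its equation \eqref{Nr53-expr} is equivalent to yours, and the assertion that solving it gives the displayed closed form is wrong. That closed form is the correct answer for S57--S60 and S65--S68. This is consistent with Table~\ref{tab:fur-mergy}, which does not merge S53--S56 into that block, as the count of 31 classes requires. What your argument actually proves is $F(x,t)=F(x)\bigl(1+G(x,t)-F(x)\bigr)$, and you should state that as the distribution of this pattern rather than trying to match $G(x,t)/t$.
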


\begin{proof}
Any occurrence of \hspace{-1mm}$\pattern{scale = 0.6}{2}{1/1,2/2}{1/1,2/0,2/1,2/2}$ must involve the leftmost element in a given permutation. 
	
In the irreducible case (see Figure~\ref{fig:perm-b}), the rightmost element lies in the block $B_2$. Moreover, since all elements in $B_1$ are larger than those in $B_2$, the first element required to form the pattern \hspace{-1mm}$\pattern{scale = 0.6}{2}{1/1,2/2}{1/1,2/0,2/1,2/2}$ cannot occur in $B_1$. In addition, the block $B_2$ can be any separable permutation associated with this pattern. Therefore, we have
\begin{equation}\label{Nr53-expr-A}
I(x,t) = x + F(x,t)\,\frac{F(x) + x}{2},
\end{equation}
where $x$ corresponds to the permutation $1$, the factor $F(x,t)$ corresponds to the block $B_2$, while $\frac{F(x) + x}{2}$ represents the g.f.\ for reducible separable permutations corresponding to $B_1$, taking into account that $B_1$ can also be of length~1.
	
	\begin{figure}[htbp]
		\centering
		\begin{tikzpicture}[line width=0.5pt,scale=0.24]
			\coordinate (O) at (0,0);
			\path (22,-4)  node {$m$};
			\draw [dashed] (6,0)--++(19,0);
			\draw [dashed] (6,-5)--++(19,0);
			\draw [dashed] (6,-10)--++(19,0);
			\fill[black!100] (O)++(21,-5) circle(1.5ex);
			\draw (10,-6) rectangle (14,-9);
			\path (12,-7.5)  node {$A_2$};
			\draw (16,-1) rectangle (20,-4);
			\path (18,-2.5)  node {$A'_1$};
			\draw [dashed] (15,0)--++(0,-12);
			\draw [dashed] (9,0)--++(0,-12);
			\draw [dashed] (21,0)--++(0,-12);
		\end{tikzpicture}
		\caption{Related to the proof of Theorem~\ref{equ:Nr.53}.}\label{fig:Nr.53}
	\end{figure}
	
	In the reducible case (see Figure~\ref{fig:perm-a}), the rightmost element lies in the block $A_1$. The first element contributing to an occurrence of \hspace{-1mm}$\pattern{scale = 0.6}{2}{1/1,2/2}{1/1,2/0,2/1,2/2}$ appears in $A_2$ if and only if the block~$A_1$ ends with its smallest element $m$, as illustrated in Figure~\ref{fig:Nr.53}. Here, $A_1=A'_1m$. In this configuration, $m$ cannot form the pattern \hspace{-1mm}$\pattern{scale = 0.6}{2}{1/1,2/2}{1/1,2/0,2/1,2/2}$ 
	with any element in $A'_1$, but it can form an occurrence of this pattern together with any right-to-left maximum in $A_2$. Conversely, in this configuration, whenever an occurrence of this pattern appears, its first element must be a right-to-left maximum in $A_2$. Moreover, $A'_1$ can be any (possibly empty) separable permutation because $A_1$ ends with $m$, which guarantees that $A_1$ is irreducible.  Let $A(x,t)$ denote the g.f. associated with this configuration. Then, 
	\begin{equation}\label{Nr53-expr-B}
		\begin{aligned}
			A(x,t)
			= x\,G(x,t)
			\left(F(x)+1\right),
		\end{aligned}
	\end{equation}
	where the block $A_2$, together with $m$, contributes the term $xG(x,t)$ 
	($G(x,t)$ is given by \eqref{eq:four-stat}), while $F(x)+1$ corresponds to the block $A'_1$.
	
Consequently, we obtain
	\begin{equation}\label{Nr53-expr-C}
		\begin{aligned}
			R(x,t) = A(x,t) + (\left(I(x,t)+x\right)\,F(x)  -A(x,1)),
		\end{aligned}
	\end{equation}
	where the factor $F(x)$ corresponds to the block $A_2$, and the factor $I(x,t)+x$ represents the g.f.\ for irreducible permutations of length $n\ge1$ corresponding to the block $A_1$.
	
	Substituting~\eqref{Nr53-expr-A} and~\eqref{Nr53-expr-C} into~$F(x,t) = I(x,t) + R(x,t)$ yields 
	\begin{equation}\label{Nr53-expr}
		\begin{aligned}
	F(x,t) =
	 x + x F(x) +(F(x)+1)
	F(x,t)\,\frac{F(x) + x}{2} +  x\,	\left(F(x)+1\right) \left(G(x,t)-F(x)\right).
		\end{aligned}
	\end{equation}
	
Solving~\eqref{Nr53-expr} for $F(x,t)$ and substituting~\eqref{eq:separable-gf} and~\eqref{eq:four-stat} yields~\eqref{eq:Nr.53}.
\end{proof}

\begin{thm}[Pairs~S57--S60]\label{equ:Nr.57}
The formula in Theorem~\ref{equ:Nr.53} holds for the pairs S57--S60.
\end{thm}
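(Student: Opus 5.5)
We will determine the distribution of S57 $=\pattern{scale = 0.6}{2}{1/1,2/2}{0/0,1/0,1/2,2/0}$ directly and then check, by algebra, that it equals \eqref{eq:Nr.53}. By the symmetry arguments already given, the other patterns in S57--S60 have the same distribution, so this suffices.

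\emph{Step 1: describe the occurrences.} Let $ab$ be an occurrence. Since the whole bottom row is shaded, no element lies below $a$, so $a=1$. Since box $(1,2)$ is shaded, no element between $1$ and $b$ in position is larger than $b$. Write $\pi=P\,1\,S$, where $S$ is the suffix to the right of $1$. Then the occurrences are exactly the pairs $1b$ with $b$ a left-to-right maximum of $S$. Hence the number of occurrences is $\lmax(S)$, taken to be $0$ if $S=\varepsilon$.

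\emph{Step 2: decompose around $1$.} Take the complement of the structure in Figure~\ref{sepStructure}, so that it is centred at the element $1$. This gives $\pi=L_1L_2\cdots L_m\,1\,R_mR_{m-1}\cdots R_1$ with $R_1>L_1>R_2>L_2>\cdots>R_m>L_m$. All blocks are nonempty separable intervals, except that $L_m$ and $R_1$ may be empty. Then $S=R_mR_{m-1}\cdots R_1$ is a concatenation of blocks whose values increase from left to right. Every left-to-right maximum of a block is therefore a left-to-right maximum of $S$, and no other element is. So $\lmax(S)=\sum_i\lmax(R_i)$, and the blocks contribute independently.

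\emph{Step 3: write the generating function.} Each nonempty $R_i$ contributes $G(x,t)$, and its lmax count is read off by Theorem~\ref{four-stat}. Each nonempty $L_i$ contributes $F(x)$. Summing over $m\ge1$ and handling the two possibly empty end blocks gives
\[
F(x,t)=x\,\bigl(1+F(x)\bigr)\bigl(1+G(x,t)\bigr)\sum_{m\ge1}\bigl(F(x)G(x,t)\bigr)^{m-1}
=\frac{x\,(1+F(x))(1+G(x,t))}{1-F(x)G(x,t)}.
\]
The bookkeeping here needs care. For $m\ge2$, the pair $(L_m,R_1)$ supplies the factor $(1+F)(1+G)$ and the middle pairs supply factors $FG$. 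For $m=1$, the blocks $L_1$ and $R_1$ are simultaneously the last $L$ and the first $R$, so both may be empty. I will check this formula against small $n$ to confirm that it does not double count.

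\emph{Step 4: match with \eqref{eq:Nr.53}.} Solving \eqref{Nr53-expr} gives
\[
F(x,t)=\frac{(1+F)\,(x+xG-xF)}{1-(1+F)(F+x)/2}.
\]
I will show that this equals the expression in Step 3, using the identity $F=x+xF+F^2+xF^2$ (equivalently $F^2-(1-x)F+x=0$, from \eqref{eq:separable-gf}) and the quadratic relation for $G$ that follows from \eqref{eq:four-stat}.

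I expect this last identity, which mixes $F$ and $G$, to be the main obstacle. If direct verification is messy, I will reduce to the S53 case: cancel the common factor $1+F$ and simplify the denominators using $1-(1+F)(F+x)/2$ and $F^2=(1-x)F-x$. Failing that, I will find a bijection between the two structures instead, sending $R_i$ with its lmax to the corresponding rmax configuration in $A_2$.
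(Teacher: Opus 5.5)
Your Steps 1--3 are correct, and they take a different route from the paper. The paper splits $\pi$ into reducible and irreducible cases to get $F(x,t)=x+F(x,t)\bigl(\frac{G}{1+G}+\frac{F+x}{2}\bigr)$. You instead centre the decomposition at the element $1$ and read off $\lmax(S)=\sum_i\lmax(R_i)$. Both give $F(x,t)=\frac{x(1+F)(1+G)}{1-FG}$, where $F=F(x)$ and $G=G(x,t)$.

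\textbf{The gap is Step~4, which targets the wrong expression.} First, a small correction: the relation is $F=x+xF+F^2$, not $x+xF+F^2+xF^2$; your second form, $F^2-(1-x)F+x=0$, is the correct one. It gives $(1+F)(F+x)/2=F$ and $x(1+F)/(1-F)=F$. Hence the solution of \eqref{Nr53-expr} is simply $F(1+G-F)$, and
\[
\frac{x(1+F)(1+G)}{1-FG}-F(1+G-F)=\frac{x(1+F)\,FG\,(G-F)}{(1-F)(1-FG)}=t(t-1)x^4+O(x^5).
\]
Concretely, for $n=4$ your description of the occurrences gives $6+9t+6t^2+t^3$, whereas $F(1+G-F)$ gives $6+10t+5t^2+t^3$. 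So the identity you plan to prove is false. No manipulation with the relations for $F$ and $G$ can establish it, and your fallback bijection to the right-to-left-maxima configuration in $A_2$ cannot exist either. In fact \eqref{Nr53-expr} does not solve to \eqref{eq:Nr.53}, despite the last sentence of the proof of Theorem~\ref{equ:Nr.53}; that sentence is the claim you were implicitly relying on.

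\textbf{What to compare with instead.} Use the displayed formula \eqref{eq:Nr.53} itself. It is verbatim $G(x,t)/t$ with $G$ from \eqref{eq:four-stat}, the same expression as \eqref{eq:Nr.65}. So the identity you need is $\frac{x(1+F)(1+G)}{1-FG}=\frac{G}{t}$, i.e.\ $G(1-FG)=xt(1+F)(1+G)$.

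This follows from your own decomposition applied to $\lmax$. In the structure of Figure~\ref{sepStructure}, no element to the right of $n$ is a left-to-right maximum. Since $L_1<L_2<\cdots<L_m$, we get $\lmax(\pi)=1+\sum_i\lmax(L_i)$. Therefore
\[
G=xt(1+G)(1+F)\sum_{m\ge1}(GF)^{m-1}.
\]
Swapping the roles of the $L$-blocks and $R$-blocks even gives a bijection showing that the S57 statistic is distributed as $\lmax-1$. This is consistent with the paper merging S57--S60 with S65--S68. With Step~4 replaced in this way, your argument is a complete proof.
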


\begin{proof}
Any occurrence of \hspace{-1mm}$\pattern{scale = 0.6}{2}{1/1,2/2}{0/0,1/0,1/2,2/0}$ in a permutation must begin with the smallest element. 
	
	In the irreducible case (see Figure~\ref{fig:perm-b}), the smallest element lies in $B_2$, which implies that the pattern \hspace{-1mm}$\pattern{scale = 0.6}{2}{1/1,2/2}{0/0,1/0,1/2,2/0}$ cannot occur within $B_1$, while $B_2$ can be any separable permutation.  Therefore, we have
	\begin{equation}\label{Nr57-expr-A}
		I(x,t) =  x + F(x,t)\,\frac{F(x) + x}{2},
	\end{equation}
	where the factor $F(x,t)$ corresponds to the block $B_2$, and $\frac{F(x) + x}{2}$ represents the g.f.\ for reducible separable permutations associated with the block $B_1$, that can be of length 1.
	
In the reducible case (see Figure~\ref{fig:perm-a}), the smallest element lies in $A_2$, and $A_2$ can be any separable permutation.
Moreover, the smallest element in $A_2$, together with any left-to-right maximum in $A_1$, contributes an additional occurrence of this pattern.
Conversely, whenever an occurrence of
\hspace{-1mm}$\pattern{scale = 0.6}{2}{1/1,2/2}{0/0,1/0,1/2,2/0}$
appears with its second element in $A_1$, this element must be a left-to-right maximum in $A_1$. Consequently, we obtain
$$R(x,t) = F(x,t)\,\frac{G(x,t)}{1 + G(x,t)}$$
where $F(x,t)$ represents the contribution of the block $A_2$,  while the term $\frac{G(x,t)}{1 + G(x,t)}$ is the distribution of left-to-right maxima on irreducible separable permutations(including the permutation of length $n=1$) given by Theorem~\ref{four-stat}.  Now,
\begin{equation}\label{Nr57-expr}
			F(x,t)
			= I(x,t)+F(x,t)=x + F(x,t)\,\left(\frac{G(x,t)}{1 + G(x,t)}  + \frac{F(x) + x}{2}\right).
\end{equation}
Solving~\eqref{Nr57-expr} for $F(x,t)$ and substituting  \eqref{eq:separable-gf} and~\eqref{eq:four-stat} yields~\eqref{eq:Nr.53}.
\end{proof}

\begin{thm}[Pairs~S69--S72]\label{equ:S69}
	We have
	\begin{eqnarray}
		F(x,t) &:=& \sum_{n \ge 1} x^n \sum_{\pi \in S_n(2413,3142)} 
		t^{\hspace{-1mm}\pattern{scale = 0.5}{2}{1/1,2/2}{0/0,0/1,1/0,1/2,2/0,2/1}(\pi)}
		= \sum_{n \ge 1} x^n \sum_{\pi \in S_n(2413,3142)} 
		t^{\hspace{-1mm}\pattern{scale = 0.5}{2}{1/2,2/1}{0/0,0/1,1/0,1/2,2/0,2/1}(\pi)}\nonumber\\
		&=& \frac{
			2x\,\bigl(3 - x + \sqrt{1 - 6x + x^2}\bigr)
		}{
			\bigl(1 - x + \sqrt{1 - 6x + x^2}\bigr)
			\bigl(3 + (3 - 4t)x + \sqrt{1 - 6x + x^2}\bigr)
		}.\label{eq:Nr.S69}
	\end{eqnarray}
\end{thm}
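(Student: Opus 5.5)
The plan is to reduce the mesh pattern to a simple statistic and then use the reducible/irreducible decomposition of Figure~\ref{fig:perm-half-two}, as in Subsection~\ref{re-irre-decom}. As usual we treat only the representative pattern $p=\pattern{scale = 0.5}{2}{1/1,2/2}{0/0,0/1,1/0,1/2,2/0,2/1}$; the other patterns in S69--S72 follow by symmetry.

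\textbf{Step 1: what an occurrence is.} Let $ab$ be an occurrence of $p$, with $a<b$ and $a$ to the left of $b$. The unshaded boxes are $(0,2)$, $(1,1)$ and $(2,2)$, so the shading says the following.
\begin{itemize}
\item Every element to the left of $a$ is larger than $b$.
\item Every element strictly between $a$ and $b$ in position has value strictly between $a$ and $b$.
\item Every element to the right of $b$ is larger than $b$.
\end{itemize}
Consequently no element is smaller than $a$, so $a=1$. Write $\pi=P\,1\,Q$. Then the occurrences of $p$ are in bijection with the right-to-left minima $b$ of $\pi$ that lie in $Q$ and satisfy $b<\min P$; when $P=\varepsilon$ this condition is vacuous. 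Call this number $T(\pi)$.

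\textbf{Step 2: the decomposition.} Let $I(x,t)$ and $R(x,t)$ be as in Subsection~\ref{re-irre-decom}.
\begin{itemize}
\item Irreducible case, $\pi=B_1\ominus B_2$. The element $1$ lies in $B_2$, and every element of $B_1$ lies to the left of $1$ and exceeds every element of $B_2$. Hence $T(\pi)=T(B_2)$, which gives $I(x,t)=x+F(x,t)\frac{F(x)+x}{2}$.
\item Reducible case, $\pi=A_2\oplus A_1$ with $A_1$ irreducible. The element $1$ lies in $A_2$, and occurrences inside $A_2$ are preserved. An element $b$ of $A_1$ can be an extra occurrence only if $1$ is the first entry of $A_2$: otherwise some element of $P$ lies in $A_2$ and is smaller than $b$. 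When $A_2$ does begin with $1$, the extra occurrences are exactly the right-to-left minima of $A_1$. Thus $T(\pi)=T(A_2)+[A_2\text{ starts with }1]\cdot\rmin(A_1)$.
\end{itemize}

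\textbf{Step 3: separating out the permutations that begin with $1$.} I would split $A_2$ according to whether it begins with $1$. A separable permutation beginning with $1$ is $1\oplus\sigma$ with $\sigma$ empty or separable, and its $T$-value is $\rmin(\sigma)$. So its generating function is $H(x,t)=x\bigl(1+G(x,t)\bigr)$, using Theorem~\ref{four-stat}. For $A_1$ I need the joint weight $t^{\rmin(A_1)}$ over irreducible separable permutations; Table~\ref{tab-dist-rmax-thm} gives this as $\frac{G(x,t)}{1+G(x,t)}-xt$, plus $xt$ for $A_1=1$. This yields
\[
R(x,t)=\bigl(F(x,t)-H(x,t)\bigr)\tfrac{F(x)+x}{2}+H\bigl(x,t\bigr)\,\tfrac{G(x,t)}{1+G(x,t)}.
\]
Here the weight of $A_2$ inside $H$ must also be tracked correctly: the second factor multiplies, with its own $t$-power, the $T$-value coming from $A_2$. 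So I will actually write $H$ as $x\sum t^{\rmin(\sigma)}$ and check that the combined exponent is additive, which Step 2 shows it is.

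\textbf{Step 4: solving.} Adding the two cases, $F(x,t)=I+R$ is linear in $F(x,t)$. Solving it and substituting \eqref{eq:separable-gf} and \eqref{eq:four-stat} should give \eqref{eq:Nr.S69}.

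\textbf{Expected obstacle.} The main obstacle is algebraic. The target formula contains no nested radical, yet $G$ does, so the square roots in $G$ must cancel. I expect this cancellation to come from the combination $x(1+G)\frac{G}{1+G}=xG$ interacting with the $H\frac{F+x}{2}$ term. If it does not cancel, I would suspect a bookkeeping error in Step 2 and recheck it.

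As a fallback I would avoid $G$ entirely. I would try to relate $T$ to the pattern $\pattern{scale = 0.5}{1}{1/1}{0/1,1/0}$ of Theorem~\ref{joint-separ-length-1}, since the denominator factor $3+(3-4t)x+\sqrt{1-6x+x^2}$ is exactly the one appearing in \eqref{eq:separ-length-1}. For example, I would check whether $F(x,t)$ equals $(F(x)+1)$ times a suitable transform of $S(x,t)$, as in the proof of Theorem~\ref{equ:Nr.45}. Before either route I would verify the resulting series numerically for small $n$.
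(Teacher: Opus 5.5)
Your decomposition in Steps~2--4 is the one the paper uses. The gap is in Step~1: your characterization drops a condition you listed yourself, and the error breaks everything downstream. Because box $(1,2)$ is shaded, every entry lying between $1$ and $b$ in position must be \emph{smaller} than $b$. Together with the shading of $(2,0),(2,1)$, this says that $b$ is larger than everything of $Q$ to its left and smaller than everything of $Q$ to its right. So occurrences correspond to the \emph{strict fixed points} of $Q$ (occurrences of N13 in $Q$) that are smaller than $\min P$, not to right-to-left minima of $\pi$ in $Q$.

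For instance, $132$ has no occurrence: the pair $1,2$ has $3$ in the shaded box $(1,2)$, and the pair $1,3$ has $2$ in box $(2,1)$. Your count gives $T(132)=1$. Accordingly, your equation yields $t^2+2t+3$ as the coefficient of $x^3$, while the true coefficient, which is also that of \eqref{eq:Nr.S69}, is $t^2+t+4$.

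The error propagates in two places. First, when $A_2=1A_2'$, the extra occurrences inside $A_1$ are the strict fixed points of the irreducible block $A_1$. There is exactly one if $|A_1|=1$ and none if $|A_1|\ge 2$, since a strict fixed point forces a permutation of length at least $2$ to be reducible. Second, the $T$-weighted generating function of $1\oplus\sigma$ is $x\bigl(1+S(x,t)\bigr)$, with $S$ the series \eqref{eq:separ-length-1} of Theorem~\ref{joint-separ-length-1}, not $x\bigl(1+G(x,t)\bigr)$. So the radicals of $G$ will not cancel, and your planned diagnostic would send you to Step~2 when the mistake is in Step~1.

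With this correction, the $A_1$-factor in the first subcase becomes $xt+\frac{F(x)-x}{2}$ instead of $\frac{G}{1+G}$, and one gets
\[
R(x,t)=F(x,t)\,\frac{F(x)+x}{2}+(t-1)x^2\bigl(1+S(x,t)\bigr),\qquad
F(x,t)\bigl(1-x-F(x)\bigr)=x+(t-1)x^2\bigl(1+S(x,t)\bigr).
\]
Since $1-x-F(x)=\frac{1}{2}\bigl(1-x+\sqrt{1-6x+x^2}\bigr)$ and $1+S(x,t)=\frac{4}{3+(3-4t)x+\sqrt{1-6x+x^2}}$, this gives \eqref{eq:Nr.S69} at once, with no nested radicals. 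This is the paper's argument, and your fallback idea of going through strict fixed points was the right instinct. Note that the paper's proof also writes $G(x,t)$ in this subcase, but the stated formula only results from using $S(x,t)$, as the denominator factor $3+(3-4t)x+\sqrt{1-6x+x^2}$ shows.
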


\begin{proof} 
	If $ab$ is an occurrence of  \hspace{-1mm}$\pattern{scale = 0.6}{2}{1/1,2/2}{0/0,0/1,1/0,1/2,2/0,2/1}$ then $a$ must be the element 1 in the permutation. 
	
	In the irreducible case (see Figure~\ref{fig:perm-b}), the bottom-right shading implies that $B_1$, a reducible separable permutation, contributes no occurrences of
	\hspace{-1mm}$\pattern{scale = 0.6}{2}{1/1,2/2}{0/0,0/1,1/0,1/2,2/0,2/1}$.
	Since the top-left region is unshaded, every occurrence of
	\hspace{-1mm}$\pattern{scale = 0.6}{2}{1/1,2/2}{0/0,0/1,1/0,1/2,2/0,2/1}$ in $B_2$ is also an occurrence in the entire permutation. As no such pattern can occur across $B_1$ and $B_2$, we obtain
	\begin{equation}\label{equ:S69-irr}
		I(x,t)=x+F(x,t)\frac{F(x)+x}{2},
	\end{equation}
	where $F(x,t)$ corresponds to $B_2$, the case $B_1=n$ is considered, and $F(x)$ is given by~\eqref{eq:separable-gf}.
	
	\begin{figure}[htbp]
		\centering
		\begin{tikzpicture}[line width=0.5pt,scale=0.24]
			\coordinate (O) at (0,0);
			\path (8,-9)  node {$1$};
			\draw [dashed] (6,0)--++(19,0);
			\draw [dashed] (6,-5)--++(19,0);
			\draw [dashed] (6,-10)--++(19,0);
			\fill[black!100] (O)++(9,-10) circle(1.5ex);
			\draw (10,-6) rectangle (14,-9);
			\path (12,-7.5)  node {$A'_2$};
			\draw (16,-1) rectangle (20,-4);
			\path (18,-2.5)  node {$A_1$};
			\draw [dashed] (15,0)--++(0,-12);
			\draw [dashed] (9,0)--++(0,-12);
			\draw [dashed] (21,0)--++(0,-12);
		\end{tikzpicture}
		\caption{Related to the proof of Theorem~\ref{equ:Nr.37} and~\ref{equ:S69}.}\label{fig:Nr.37} 
	\end{figure}
	
	In the reducible case (see Figure~\ref{fig:perm-a}), 
	since the upper-right corner is unshaded and the bottom-left corner is shaded, 
	$A_1$ cannot contribute any occurrences of 
	\hspace{-1mm}$\pattern{scale=0.6}{2}{1/1,2/2}{0/0,0/1,1/0,1/2,2/0,2/1}$.  
	On the other hand, every occurrence of this pattern in $A_2$ is also an occurrence in the entire permutation.  
	We then consider two subcases:
	\begin{itemize}
		\item The block $A_2$ begins with the smallest element. That is,
		$A_2 = 1A'_2$,
		as illustrated in Figure~\ref{fig:Nr.37}, where $A'_2$ may be empty. Then the element~$1$, together with any strict fixed point (i.e., an occurrence of \hspace{-1mm}$\pattern{scale = 0.6}{1}{1/1}{0/1,1/0}$) in $A_1$ and $A'_2$, forms an occurrence of
		\hspace{-1mm}$\pattern{scale=0.6}{2}{1/1,2/2}{0/0,0/1,1/0,1/2,2/0,2/1}$.  Since $A_1$ is an irreducible permutation,
		such an occurrence of
		\hspace{-1mm}$\pattern{scale = 0.6}{1}{1/1}{0/1,1/0}$
		can appear if and only if $A_1$ consists of a single point. 
		Therefore, the g.f.\ for this subcase is
		$$
		tx^2\bigl(G(x,t)+1\bigr)
		\;+\;
		x\bigl(1+G(x,t)\bigr)\frac{F(x)-x}{2},
		$$
		where $G(x,t)$ is given by Theorem~\ref{four-stat}. Here the term $tx^2\bigl(G(x,t)+1\bigr)$ corresponds to the case where
		$A_1$ consists of a single element (i.e., $A_1= n$), while
		$x\bigl(1+G(x,t)\bigr)\frac{F(x)-x}{2}$ accounts for the case  where $|A_1|\ge 2$.
		\item The block $A_2$ does not start with the smallest element. Then every occurrence of \hspace{-1mm}$\pattern{scale=0.6}{2}{1/1,2/2}{0/0,0/1,1/0,1/2,2/0,2/1}$, if any, must be entirely in $A_2$, and the number of choices for $A_1$ are given by $\frac{F(x) + x}{2}$. Therefore, in this subcase, the g.f.\ is
		$$\bigl(F(x,t)-x\bigl(1+G(x,t)\bigr)\bigl)\frac{F(x)+x}{2}.$$
	\end{itemize}
	Consequently, we obtain
	\begin{equation}\label{equ:S69-red}
		R(x,t) = F(x,t)\frac{F(x)+x}{2}+(t-1)x^2\bigl(1+G(x,t)\bigr) .
	\end{equation}
	
	Substituting (\ref{equ:S69-irr}) and (\ref{equ:S69-red}) into $F(x,t)=I(x,t)+R(x,t)$, we obtain (\ref{eq:Nr.S69}).
\end{proof}

\begin{thm}[Pairs~S73--S76]\label{equ:Nr.73}
We have
\begin{eqnarray}
	F(x,t) &=& \sum_{n \ge 1} x^n \sum_{\pi \in S_n(2413,3142)} 
	t^{\hspace{-1mm}\pattern{scale = 0.5}{2}{1/1,2/2}{0/0,0/2,1/0,1/1,1/2}(\pi)}
	= \sum_{n \ge 1} x^n \sum_{\pi \in S_n(2413,3142)} 
	t^{\hspace{-1mm}\pattern{scale = 0.5}{2}{1/2,2/1}{0/0,0/2,1/0,1/1,1/2}(\pi)}\nonumber\\
&=& \frac{
			2\,\left(
			x + \tfrac{1}{4}(t-1)\,x\,\bigl(1 + x - \sqrt{\,1 - 6x + x^{2}\,}\bigr)
			\right)
		}{
			1 - x + \sqrt{\,1 - 6x + x^{2}\,}
			+ \tfrac{1}{2}(1- t)\,x\,\bigl(1 + x - \sqrt{\,1 - 6x + x^{2}\,}\bigr)
		}.\label{eq:Nr.73}
	\end{eqnarray}
\end{thm}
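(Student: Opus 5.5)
We prove the formula for the representative $p=\pattern{scale = 0.6}{2}{1/1,2/2}{0/0,0/2,1/0,1/1,1/2}$; the equidistribution with the other patterns of S73--S76 and with their $21$ partners is already given by the symmetry operations. We use the reducible/irreducible decomposition of Figure~\ref{fig:perm-half-two}, with $I(x,t)$ and $R(x,t)$ as in the previous theorems.

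\textbf{What an occurrence looks like.} Let $ab$ be an occurrence of $p$. The shaded middle column forces $a$ and $b$ to be adjacent, with $a<b$. The shaded boxes $(0,0)$ and $(0,2)$ force every element to the left of $a$ to have value strictly between $a$ and $b$.

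\textbf{Irreducible case.} Here $\pi=B_1B_2$ with $B_1>B_2$.
\begin{itemize}
\item An occurrence inside $B_2$ is impossible: its first element has all of $B_1$ to its left, and those elements exceed $b$.
\item An occurrence across the two blocks is impossible, since the last element of $B_1$ followed by the first element of $B_2$ is a descent.
\item Occurrences inside $B_1$ are exactly the occurrences of $p$ in $B_1$, because the conditions only involve elements to the left of $b$.
\end{itemize}
Since $B_1$ is reducible or a single point and $B_2$ is arbitrary, this gives
\[
I(x,t)=x+\bigl(x+R(x,t)\bigr)F(x).
\]

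\textbf{Reducible case.} Here $\pi=A_2A_1$ with $A_2<A_1$ and $A_1$ irreducible.
\begin{itemize}
\item $A_1$ contributes nothing. If $|A_1|\ge 2$, any candidate $a$ in $A_1$ has all of $A_2$ to its left, and those elements are smaller than $a$. If $|A_1|=1$, there is no pair inside $A_1$ at all.
\item Occurrences inside $A_2$ are exactly the occurrences of $p$ in $A_2$, since everything involved lies to the left of $A_1$.
\item A crossing occurrence must be formed by the last element $a$ of $A_2$ and the first element $b$ of $A_1$. The condition that everything to the left of $a$ lies between $a$ and $b$ holds precisely when $a$ is the minimum of $A_2$; the upper bound by $b$ is automatic.
\end{itemize}

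\textbf{Separable permutations ending with their minimum.} Let $h(x,t)$ be the generating function, with $t$ marking occurrences of $p$, for separable permutations ending with their minimum. Such a permutation is $C1$ with $C$ possibly empty. The final $1$ lies to the right of every occurrence in $C$ and cannot start an occurrence, so it neither creates nor destroys any. Hence
\[
h(x,t)=x\bigl(F(x,t)+1\bigr).
\]

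\textbf{Reducible generating function.} The block $A_1$ ranges over all irreducible separable permutations, including length $1$, with no weight; their generating function is $\frac{F(x)+x}{2}$ by Lemma~\ref{half-first-last}. Therefore
\[
R(x,t)=\Bigl(F(x,t)+(t-1)\,x\bigl(F(x,t)+1\bigr)\Bigr)\frac{F(x)+x}{2}.
\]

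\textbf{Solving.} Substituting into $F(x,t)=I(x,t)+R(x,t)$ gives a linear equation in $F(x,t)$, with coefficients built from $F(x)$. We solve it and insert~\eqref{eq:separable-gf} to obtain~\eqref{eq:Nr.73}.

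The main obstacle is the crossing analysis in the reducible case, specifically checking that no occurrence starting in $A_2$ with second element in $A_1$ is missed. Adjacency reduces this to the single pair at the junction, which settles it; the final algebraic simplification to~\eqref{eq:Nr.73} is routine.
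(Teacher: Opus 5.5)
Your proof is correct and follows essentially the same approach as the paper's proof. You use the same irreducible equation $I(x,t)=x+\bigl(R(x,t)+x\bigr)F(x)$ and the same reducible equation: your correction term $(t-1)\,x\bigl(F(x,t)+1\bigr)\frac{F(x)+x}{2}$ is exactly the paper's $A(x,t)-A(x,t)/t$ for the configuration $A_2=A'_2 1$. Your additional remarks make explicit what the paper only asserts: that the junction pair is the only possible crossing occurrence, and that the trailing $1$ of $A'_2 1$ neither creates nor destroys occurrences.
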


\begin{proof}
An occurrence of the pattern \hspace{-1mm}$\pattern{scale = 0.6}{2}{1/1,2/2}{0/0,0/2,1/0,1/1,1/2}$ is an ascent satisfying extra conditions.  
	
In the irreducible case (see Figure~\ref{fig:perm-b}), since in
 \hspace{-1mm}$\pattern{scale = 0.6}{2}{1/1,2/2}{0/0,0/2,1/0,1/1,1/2}$ the upper-left region is shaded,
the block $B_2$ cannot contain any occurrences of the pattern. Moreover, there are clearly
no occurrences spanning both $B_1$ and $B_2$. Consequently, the g.f.\ $I(x,t)$ is
\begin{equation}\label{Nr73-expr-A}
	I(x,t) = x + \bigl(R(x,t)+x\bigr)F(x).
\end{equation}
	
	\begin{figure}[htbp]
		\centering
		\begin{tikzpicture}[line width=0.5pt,scale=0.24]
			\coordinate (O) at (0,0);
			\path (13.8,-3)  node {$m$};
			\path (10,-9)  node {$1$};
			\draw [dashed] (0,0)--++(25,0);
			\draw [dashed] (0,-5)--++(25,0);
			\draw [dashed] (0,-10)--++(25,0);
			\fill[black!100] (O)++(15,-3) circle(1.5ex);
			\fill[black!100] (O)++(9,-10) circle(1.5ex);
			\draw (4,-6) rectangle (8,-9);
			\path (6,-7.5)  node {$A'_2$};
			\draw (16,-1) rectangle (20,-4);
			\path (18,-2.5)  node {$A'_1$};
			\draw [dashed] (15,0)--++(0,-12);
			\draw [dashed] (9,0)--++(0,-12);
			\draw [dashed] (3,0)--++(0,-12);
			\draw [dashed] (21,0)--++(0,-12);
		\end{tikzpicture}
		\caption{Related to the proof of Theorem~\ref{equ:Nr.73}.}\label{fig:Nr.73}
	\end{figure}
	
In the reducible case (see Figure~\ref{fig:perm-a}), since in
\hspace{-1mm}$\pattern{scale = 0.6}{2}{1/1,2/2}{0/0,0/2,1/0,1/1,1/2}$ the lower-left region is shaded,
the block $A_1$ cannot contain an occurrence of the pattern, and it does not affect the block $A_2$.
However, an extra occurrence can arise across $A_1$ and $A_2$, but only in the situation illustrated
in Figure~\ref{fig:Nr.73}, where $A_2 = A'_2 1$ and $A_1 = m A'_1$. Let $A(x,t)$ denote the g.f.\
associated with this configuration. Then
\begin{equation}\label{Nr73-expr-B}
	A(x,t) = t x \bigl(F(x,t) + 1\bigr)\frac{F(x) + x}{2},
\end{equation}
since $1m$ contributes a factor $t$, the block $A_2$ contributes the term $x\bigl(F(x,t) + 1\bigr)$,
and $A_1$ contributes the term $\frac{F(x) + x}{2}$. Therefore, we have
	\begin{equation}\label{Nr73-expr-C}
		R(x,t) = F(x,t) \,\frac{F(x) + x}{2} + A(x,t) - \frac{A(x,t)}{t},
	\end{equation}
	where the factor $F(x,t)$ corresponds to the block $A_2$, and the term $\frac{F(x) + x}{2}$ to $A_1$. 
	
	Combining \eqref{Nr73-expr-A}--\eqref{Nr73-expr-C} with $F(x,t) = I(x,t) + R(x,t) $, we obtain 
	\begin{equation}\label{Nr73-expr}
		\begin{aligned}
		F(x,t)=
		 x + x F(x) +(F(x)+1)\,\left(\frac{F(x) + x}{2} F(x,t)
		+ (t-1)\,x\,\bigl(F(x,t) + 1\bigr)\,\frac{F(x) + x}{2}\right).
		\end{aligned}
	\end{equation}
By solving (\ref{Nr73-expr}) for $F(x,t)$ and substituting~\eqref{eq:separable-gf}, we obtain~\eqref{eq:Nr.73}.   
\end{proof}

\begin{thm}[Pairs~S77--S80]\label{equ:Nr.77}
We have
\begin{eqnarray}
F(x,t) &=& \sum_{n \ge 1} x^n \sum_{\pi \in S_n(2413,3142)} 
t^{\hspace{-1mm}\pattern{scale = 0.5}{2}{1/1,2/2}{0/0,0/2,1/0,1/2,2/1}(\pi)}
= \sum_{n \ge 1} x^n \sum_{\pi \in S_n(2413,3142)} 
t^{\hspace{-1mm}\pattern{scale = 0.5}{2}{1/2,2/1}{0/0,0/2,1/0,1/2,2/1}(\pi)}\nonumber\\
&=& \frac{x\left(3-x - \sqrt{1 - 6x + x^2}\right)}{1 + (t-1)x^2+ \sqrt{1 - 6x + x^2}+ f(x,t)}\label{eq:Nr.77}
\end{eqnarray}
where
\begin{equation}
\begin{aligned}
f(x,t)
= x\left(4 - \sqrt{1 - 6x + x^2} - t\left(3 - \sqrt{1 - 6x + x^2}\right)\right). \nonumber
\end{aligned}
\end{equation}
\end{thm}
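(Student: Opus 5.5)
The plan is to use the reducible/irreducible decomposition of Figure~\ref{fig:perm-half-two}, as in the proofs of Theorems~\ref{equ:Nr.33} and~\ref{equ:Nr.73}. The equidistribution of all four pairs S77--S80 is already given by symmetry (Table~\ref{tab:sym-4group}), so it suffices to treat $p=\pattern{scale = 0.6}{2}{1/1,2/2}{0/0,0/2,1/0,1/2,2/1}$.

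First translate the shading. An occurrence $ab$ with $a<b$, $a$ to the left of $b$, requires the following.
\begin{itemize}
\item[(i)] No element to the left of $b$ is smaller than $a$ (boxes $(0,0)$ and $(1,0)$).
\item[(ii)] No element to the left of $b$ is larger than $b$ (boxes $(0,2)$ and $(1,2)$).
\item[(iii)] No element to the right of $b$ has value between $a$ and $b$ (box $(2,1)$).
\end{itemize}

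\textbf{Irreducible case} (Figure~\ref{fig:perm-b}). An occurrence inside $B_1$ remains an occurrence in $\pi$, since $B_2$ lies to the right of and below everything in $B_1$. An occurrence inside $B_2$ is destroyed by (ii), because $B_1$ is nonempty and lies to the left and above. No $12$ spans the two blocks. Since $B_1$ is reducible or the single element $n$, I expect
$$I(x,t)=x+\bigl(R(x,t)+x\bigr)F(x).$$

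\textbf{Reducible case} (Figure~\ref{fig:perm-a}). Occurrences inside $A_2$ survive, because $A_1$ lies to the right and above, which is compatible with (ii) and (iii). Occurrences inside $A_1$ are killed by (i), because $A_2$ is nonempty and lies to the left and below. For a crossing occurrence with $a\in A_2$ and $b\in A_1$, condition (i) forces $a$ to be the minimum of $A_2$, which always exists. Conditions (ii) and (iii) force every element of $A_1$ to the left of $b$ to be smaller than $b$, and every element of $A_1$ to the right of $b$ to be larger than $b$. In other words, $b$ is a strict fixed point (pattern N13) of $A_1$. The key observation is that an irreducible permutation has such a point only when it has length~$1$. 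Hence exactly one extra occurrence arises precisely when $A_1$ is a single element. Irreducible $A_1$ of length at least~$2$ contribute $\frac{F(x)-x}{2}$, so
$$R(x,t)=F(x,t)\Bigl(\frac{F(x)-x}{2}+tx\Bigr).$$

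Adding the two cases gives
$$F(x,t)=\bigl(1+F(x)\bigr)\bigl(x+R(x,t)\bigr).$$
This is linear in $F(x,t)$, so
$$F(x,t)=\frac{x\bigl(1+F(x)\bigr)}{1-\bigl(1+F(x)\bigr)\bigl(\tfrac{F(x)-x}{2}+tx\bigr)}.$$
Substituting \eqref{eq:separable-gf} and simplifying should produce \eqref{eq:Nr.77}. As a check, at $t=1$ the formula should reduce to $F(x)$.

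The main obstacle is the crossing analysis in the reducible case, in particular making sure the reduction to a strict fixed point of the irreducible block $A_1$ is airtight. I would also verify the final formula against small $n$, since the algebraic simplification to the stated closed form is routine but error-prone.
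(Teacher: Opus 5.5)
Your proposal is correct and follows essentially the same route as the paper. It uses the same reducible/irreducible decomposition, the same expressions $I(x,t)=x+(R(x,t)+x)F(x)$ and $R(x,t)=F(x,t)\bigl(\frac{F(x)-x}{2}+tx\bigr)$, hence the same linear equation for $F(x,t)$, and the substitution does give the stated closed form. Your strict-fixed-point argument for why a crossing occurrence arises only when $|A_1|=1$ spells out what the paper asserts briefly, and it mirrors the reasoning the paper uses for S69.
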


\begin{proof}	
	In the irreducible case (see Figure~\ref{fig:perm-b}), because the upper-left region of \hspace{-1mm}$\pattern{scale = 0.6}{2}{1/1,2/2}{0/0,0/2,1/0,1/2,2/1}$ is shaded, the block $B_2$ cannot contain an occurrence of the pattern, and clearly, the pattern does not occur across $B_1$ and $B_2$. Therefore, $I(x,t)$ is	\begin{equation}\label{Nr77-expr-A}
		I(x,t) = x+(R(x,t)+x) \,F(x).
	\end{equation}
	
In the reducible case (see Figure~\ref{fig:perm-a}), since the lower-left region of
\hspace{-1mm}$\pattern{scale = 0.6}{2}{1/1,2/2}{0/0,0/2,1/0,1/2,2/1}$ is shaded,
this pattern cannot occur in $A_1$, and clearly any occurrence of the pattern in $A_2$
is also an occurrence in the entire permutation. Moreover, if the irreducible block $A_1$
has length greater than~1, then there cannot be any occurrences of the pattern spanning
both $A_1$ and $A_2$. If, on the other hand, $A_1$ has length~1, then the element~$1$ in $A_2$
together with $A_1$ forms an occurrence of
\hspace{-1mm}$\pattern{scale = 0.6}{2}{1/1,2/2}{0/0,0/2,1/0,1/2,2/1}$.
These two subcases give, respectively, the first and second summands in
\begin{equation}\label{Nr77-expr-D}
	R(x,t) = F(x,t)\,\frac{F(x) - x}{2} + t x F(x,t).
\end{equation}
	
	By substituting~\eqref{Nr77-expr-A} and \eqref{Nr77-expr-D} into $F(x,t) = I(x,t) + R(x,t)$, we obtain
	\begin{equation}\label{77-expr}
		F(x,t) = \left(1+F(x)\right)\,\left(F(x,t)\,\frac{F(x)-x}{2} + tx\,F(x,t)\right) + x + xF(x).
	\end{equation}
Solving (\ref{77-expr}) for $F(x,t)$ and substituting~\eqref{eq:separable-gf}, we obtain~\eqref{eq:Nr.77}.
\end{proof}

\section{Conclusion}
In this paper, we first determined the distributions of all mesh patterns of length~1 (see Table~\ref{tab:length-1} and the references therein). In one instance, Theorem~\ref{joint-separ-length-1}, we derived the joint equidistribution of strict fixed points and reverse strict fixed points. Using computer experiments and symmetry operations, we then classified the mesh patterns $12$ and $21$ with identical shadings into 38 equivalence classes based on their distributions; these classes are listed in Tables~\ref{tab:sym-1group}--\ref{tab:g.f.}. Among the 24 (joint) distributions we derived, several were identical, allowing us to merge some equivalence classes and reduce the total number to 31. These mergings are summarized in Table~\ref{tab:fur-mergy}, where patterns with identical distributions are grouped within the same block. Computer experiments confirm that the first several terms of the distributions of the 31 equivalence classes (17 of which we enumerated) are distinct, so no further merging is possible. Enumerating the remaining 14 classes -- those missing references in Tables~\ref{tab:sym-1group}--\ref{tab:g.f.} -- remains an open problem, providing a natural direction for future work.

\begin{table}[ht]
	{
		\renewcommand{\arraystretch}{1.5}    
		\setlength{\tabcolsep}{6pt}          
		\begin{center}
			\begin{tabular}{|c c c c c c c c c c|}
				\hline
				\footnotesize{Nr.} & \footnotesize{Patterns} &
				\footnotesize{Nr.} & \footnotesize{Patterns} &
				\footnotesize{Nr.} & \footnotesize{Patterns} &
				\footnotesize{Nr.} & \footnotesize{Patterns} &
				\footnotesize{Nr.} & \footnotesize{Patterns}  \\
				\hline\hline
				
				\footnotesize{S2} &
				$\pattern{scale = 0.6}{2}{1/1,2/2}
				{0/0,0/1,0/2,1/0,1/2,2/0,2/1,2/2}\pattern{scale = 0.6}{2}{1/2,2/1}{0/0,0/1,0/2,1/0,1/2,2/0,2/1,2/2}$ &
				\footnotesize{S17} & $\pattern{scale = 0.6}{2}{1/1,2/2}{0/0,0/1,0/2,1/1,1/2,2/1,2/2,2/0}\pattern{scale = 0.6}{2}{1/2,2/1}{0/0,0/1,0/2,1/1,1/2,2/1,2/2,2/0}$  &
				\footnotesize{S18} & $\pattern{scale = 0.6}{2}{1/1,2/2}{0/0,0/1,1/0,0/2,1/1,2/1,2/2,2/0}\pattern{scale = 0.6}{2}{1/2,2/1}{0/0,0/1,1/0,0/2,1/1,2/1,2/2,2/0}$  &
				\footnotesize{S19} & $\pattern{scale = 0.6}{2}{1/1,2/2}{0/0,0/1,1/0,0/2,1/1,1/2,2/2,2/0}\pattern{scale = 0.6}{2}{1/2,2/1}{0/0,0/1,1/0,0/2,1/1,1/2,2/2,2/0}$  &
				\footnotesize{S20} & $\pattern{scale = 0.6}{2}{1/1,2/2}{0/0,1/0,0/2,1/1,1/2,2/1,2/2,2/0}\pattern{scale = 0.6}{2}{1/2,2/1}{0/0,1/0,0/2,1/1,1/2,2/1,2/2,2/0}$  \\[5pt]
				\cline{9-10} 
				\footnotesize{S113} & $\pattern{scale = 0.6}{2}{1/1,2/2}{0/0,0/1,1/0,0/2,1/1,1/2,2/1,2/2}\pattern{scale = 0.6}{2}{1/2,2/1}{0/0,0/1,1/0,0/2,1/1,1/2,2/1,2/2}$  &
				\footnotesize{S114} & $\pattern{scale = 0.6}{2}{1/1,2/2}{0/1,1/0,0/2,1/1,1/2,2/1,2/2,2/0}\pattern{scale = 0.6}{2}{1/2,2/1}{0/1,1/0,0/2,1/1,1/2,2/1,2/2,2/0}$  &
				\footnotesize{S115} & $\pattern{scale = 0.6}{2}{1/1,2/2}{0/0,0/1,1/0,1/1,1/2,2/1,2/2,2/0}\pattern{scale = 0.6}{2}{1/2,2/1}{0/0,0/1,1/0,1/1,1/2,2/1,2/2,2/0}$  &
				\footnotesize{S116} &
				\multicolumn{1}{c|}{$\pattern{scale = 0.6}{2}{1/1,2/2}{0/0,0/1,1/0,0/2,1/1,1/2,2/1,2/0}
				\pattern{scale = 0.6}{2}{1/2,2/1}{0/0,0/1,1/0,0/2,1/1,1/2,2/1,2/0}$} &
				\footnotesize{S9} &
				$\pattern{scale = 0.6}{2}{1/1,2/2}
				{0/0,0/1,0/2,2/0,2/1,2/2}\pattern{scale = 0.6}{2}{1/2,2/1}{0/0,0/1,0/2,2/0,2/1,2/2}$ 
				\\[5pt]
				\cline {1-8}
				\footnotesize{S10} &
				$\pattern{scale = 0.6}{2}{1/1,2/2}
				{0/0,0/2,1/0,1/2,2/0,2/2}\pattern{scale = 0.6}{2}{1/2,2/1}{0/0,0/2,1/0,1/2,2/0,2/2}$ &
				\footnotesize{S29} & $\pattern{scale = 0.6}{2}{1/1,2/2}{0/0,0/1,0/2,1/0,1/1,1/2}\pattern{scale = 0.6}{2}{1/2,2/1}{0/0,0/1,0/2,1/0,1/1,1/2}$ &
				\footnotesize{S30}& $\pattern{scale = 0.6}{2}{1/1,2/2}{0/1,0/2,1/1,1/2,2/1,2/2}\pattern{scale = 0.6}{2}{1/2,2/1}{0/1,0/2,1/1,1/2,2/1,2/2}$  &
				\footnotesize{S31} & $\pattern{scale = 0.6}{2}{1/1,2/2}{1/0,1/1,1/2,2/0,2/1,2/2}\pattern{scale = 0.6}{2}{1/2,2/1}{1/0,1/1,1/2,2/0,2/1,2/2}$ &
				\footnotesize{S32} & $\pattern{scale = 0.6}{2}{1/1,2/2}{0/0,0/1,1/0,1/1,2/0,2/1}\pattern{scale = 0.6}{2}{1/2,2/1}{0/0,0/1,1/0,1/1,2/0,2/1}$\\[5pt] \hline
				\footnotesize{S11} &
				$\pattern{scale = 0.6}{2}{1/1,2/2}
				{0/0,0/2,1/0,1/1,1/2,2/0,2/2}
				\pattern{scale = 0.6}{2}{1/2,2/1}
				{0/0,0/2,1/0,1/1,1/2,2/0,2/2}$ &
				\footnotesize{S12} &
				$\pattern{scale = 0.6}{2}{1/1,2/2}
				{0/0,0/1,0/2,1/1,2/0,2/1,2/2}
				\pattern{scale = 0.6}{2}{1/2,2/1}
				{0/0,0/1,0/2,1/1,2/0,2/1,2/2}$ &
				\footnotesize{S117} & $\pattern{scale = 0.6}{2}{1/1,2/2}{0/0,0/1,1/0,1/1,1/2,2/0,2/2}\pattern{scale = 0.6}{2}{1/2,2/1}{0/0,0/1,1/0,1/1,1/2,2/0,2/2}$  &
				\footnotesize{S118} & $\pattern{scale = 0.6}{2}{1/1,2/2}{0/1,0/2,1/0,1/1,1/2,2/0,2/2}\pattern{scale = 0.6}{2}{1/2,2/1}{0/1,0/2,1/0,1/1,1/2,2/0,2/2}$  &
				\footnotesize{S119} & $\pattern{scale = 0.6}{2}{1/1,2/2}{0/0,0/2,1/0,1/1,1/2,2/1,2/2}\pattern{scale = 0.6}{2}{1/2,2/1}{0/0,0/2,1/0,1/1,1/2,2/1,2/2}$   \\[5pt]
				\footnotesize{S120} & $\pattern{scale = 0.6}{2}{1/1,2/2}{0/0,0/2,1/0,1/1,1/2,2/0,2/1}\pattern{scale = 0.6}{2}{1/2,2/1}{0/0,0/2,1/0,1/1,1/2,2/0,2/1}$  &
				\footnotesize{S121} & $\pattern{scale = 0.6}{2}{1/1,2/2}{0/0,0/2,0/1,1/1,1/2,2/0,2/1}\pattern{scale = 0.6}{2}{1/2,2/1}{0/0,0/2,0/1,1/1,1/2,2/0,2/1}$  &
				\footnotesize{S122} & $\pattern{scale = 0.6}{2}{1/1,2/2}{0/0,0/1,1/0,1/1,0/2,2/1,2/2}\pattern{scale = 0.6}{2}{1/2,2/1}{0/0,0/1,1/0,1/1,0/2,2/1,2/2}$  &
				\footnotesize{S123} & $\pattern{scale = 0.6}{2}{1/1,2/2}{0/0,1/2,0/1,1/1,2/0,2/1,2/2}\pattern{scale = 0.6}{2}{1/2,2/1}{0/0,1/2,0/1,1/1,2/0,2/1,2/2}$  &
				\footnotesize{S124} & $\pattern{scale = 0.6}{2}{1/1,2/2}{0/1,0/2,1/0,1/1,2/0,2/1,2/2}\pattern{scale = 0.6}{2}{1/2,2/1}{0/1,0/2,1/0,1/1,2/0,2/1,2/2}$ \\[5pt]
				\hline
				\footnotesize{S21} & $\pattern{scale = 0.6}{2}{1/1,2/2}{0/0,0/1,1/0,1/1,1/2,2/1,2/0}\pattern{scale = 0.6}{2}{1/2,2/1}{0/0,0/1,1/0,1/1,1/2,2/1,2/0}$  &
				\footnotesize{S22}& $\pattern{scale = 0.6}{2}{1/1,2/2}{2/2,0/1,1/0,1/1,1/2,2/1,0/2}\pattern{scale = 0.6}{2}{1/2,2/1}{2/2,0/1,1/0,1/1,1/2,2/1,0/2}$  &
				\footnotesize{S23}&$\pattern{scale = 0.6}{2}{1/1,2/2}{0/0,0/1,1/0,1/1,1/2,2/1,0/2}\pattern{scale = 0.6}{2}{1/2,2/1}{0/0,0/1,1/0,1/1,1/2,2/1,0/2}$  &
				\footnotesize{S24} &$\pattern{scale = 0.6}{2}{1/1,2/2}{2/0,0/1,1/0,1/1,1/2,2/1,2/2}\pattern{scale = 0.6}{2}{1/2,2/1}{2/0,0/1,1/0,1/1,1/2,2/1,2/2}$ &
				\footnotesize{S25} & $\pattern{scale = 0.6}{2}{1/1,2/2}{0/0,0/1,0/2,1/0,2/0,2/1,2/2}\pattern{scale = 0.6}{2}{1/2,2/2}{0/0,0/1,0/2,1/0,2/0,2/1,2/2}$   \\[5pt]
				\cline{7-10}
				\footnotesize{S26} & $\pattern{scale = 0.6}{2}{1/1,2/2}{0/0,0/1,0/2,1/2,2/0,2/1,2/2}\pattern{scale = 0.6}{2}{1/2,2/1}{0/0,0/1,0/2,1/2,2/0,2/1,2/2}$  &
				\footnotesize{S27} & $\pattern{scale = 0.6}{2}{1/1,2/2}{0/0,0/2,1/0,1/2,2/0,2/1,2/2}\pattern{scale = 0.6}{2}{1/2,2/1}{0/0,0/2,1/0,1/2,2/0,2/1,2/2}$  &
				\footnotesize{S28} & \multicolumn{1}{c|}{ $\pattern{scale = 0.6}{2}{1/1,2/2}{0/0,0/1,0/2,1/0,1/2,2/0,2/2}\pattern{scale = 0.6}{2}{1/2,2/1}{0/0,0/1,0/2,1/0,1/2,2/0,2/2}$} &
			    \footnotesize{S37} & $\pattern{scale = 0.6}{2}{1/1,2/2}{0/0,0/1,0/2,2/0,2/2}\pattern{scale = 0.6}{2}{1/2,2/1}{0/0,0/1,0/2,2/0,2/2}$  &
			    \footnotesize{S38} & $\pattern{scale = 0.6}{2}{1/1,2/2}{0/0,0/2,1/2,2/0,2/2}\pattern{scale = 0.6}{2}{1/2,2/1}{0/0,0/2,1/2,2/0,2/2}$  \\[5pt]
				\cline{1-6}
				\footnotesize{S39} & $\pattern{scale = 0.6}{2}{1/1,2/2}{0/0,0/2,2/0,2/1,2/2}\pattern{scale = 0.6}{2}{1/2,2/1}{0/0,0/2,2/0,2/1,2/2}$  &
				\footnotesize{S40} & $\pattern{scale = 0.6}{2}{1/1,2/2}{0/0,0/2,1/0,2/0,2/2}\pattern{scale = 0.6}{2}{1/2,2/1}{0/0,0/2,1/0,2/0,2/2}$ &
				\footnotesize{S41} & $\pattern{scale = 0.6}{2}{1/1,2/2}{0/0,0/1,0/2,1/0,1/2}\pattern{scale = 0.6}{2}{1/2,2/1}{0/0,0/1,0/2,1/0,1/2}$ &
				\footnotesize{S42} & $\pattern{scale = 0.6}{2}{1/1,2/2}{0/1,0/2,1/2,2/1,2/2}\pattern{scale = 0.6}{2}{1/2,2/1}{0/1,0/2,1/2,2/1,2/2}$  &
				\footnotesize{S43} & $\pattern{scale = 0.6}{2}{1/1,2/2}{1/0,1/2,2/0,2/1,2/2}\pattern{scale = 0.6}{2}{1/2,2/1}{1/0,1/2,2/0,2/1,2/2}$  \\[5pt]
				\cline{3-10}
				\footnotesize{S44} & \multicolumn{1}{c|} {$\pattern{scale = 0.6}{2}{1/1,2/2}{0/0,0/1,1/0,2/0,2/1}\pattern{scale = 0.6}{2}{1/2,2/1}{0/0,0/1,1/0,2/0,2/1}$} &
				\footnotesize{S57} & $\pattern{scale = 0.6}{2}{1/1,2/2}{0/0,1/0,1/2,2/0}\pattern{scale = 0.6}{2}{1/2,2/1}
				{0/0,1/0,1/2,2/0}$  &
				\footnotesize{S58} & $\pattern{scale = 0.6}{2}{1/1,2/2}{0/2,1/0,1/2,2/2}\pattern{scale = 0.6}{2}{1/2,2/1}
				{0/2,1/0,1/2,2/2}$ &
				\footnotesize{S59} & $\pattern{scale = 0.6}{2}{1/1,2/2}{0/1,2/0,2/1,2/2}\pattern{scale = 0.6}{2}{1/2,2/1}
				{0/1,2/0,2/1,2/2}$  &
				\footnotesize{S60} & $\pattern{scale = 0.6}{2}{1/1,2/2}{0/0,0/1,0/2,2/1}\pattern{scale = 0.6}{2}{1/2,2/1}
				{0/0,0/1,0/2,2/1}$ \\[5pt]
				\cline{1-2}
				\footnotesize{S65} & $\pattern{scale = 0.6}{2}{1/1,2/2}{0/0,0/1,2/0,2/1}\pattern{scale = 0.6}{2}{1/2,2/1}
				{0/0,0/1,2/0,2/1}$ &
				\footnotesize{S66} & $\pattern{scale = 0.6}{2}{1/1,2/2}{1/0,1/2,2/0,2/2}\pattern{scale = 0.6}{2}{1/2,2/1}
				{1/0,1/2,2/0,2/2}$  &
				\footnotesize{S67} & $\pattern{scale = 0.6}{2}{1/1,2/2}{0/1,0/2,2/1,2/2}\pattern{scale = 0.6}{2}{1/2,2/1}
				{0/1,0/2,2/1,2/2}$ &
				\footnotesize{S68} & $\pattern{scale = 0.6}{2}{1/1,2/2}{0/0,0/2,1/0,1/2}\pattern{scale = 0.6}{2}{1/2,2/1}{0/0,0/2,1/0,1/2}$  & & 
				\\[5pt] 
				\hline
			\end{tabular}
		\end{center}
	}
	\vspace{-0.5cm}
	\caption{Equidistribution classes after further merging.}
	\label{tab:fur-mergy}
\end{table}

\section*{Acknowledgements} The authors thank Philip B. Zhang for sharing his computational experiments on the equidistribution of the mesh patterns studied here. They also thank the anonymous reviewers for their helpful comments, which improved the presentation of the paper. The first author is supported by the National Natural Science Foundation of China (No.~12671393) and the Guangdong Basic and Applied Basic Research Foundation \\ (No.~2024A1515011276).


\begin{thebibliography}{10}


	\bibitem{S-S-J} S. Avgustinovich, S. Kitaev, J. Liese, V. Potapov and A. Taranenko. Singleton mesh patterns in multidimensional permutations, {\em J. Comb. Theory, Ser. A.}, {\bf201} (2024) 105801.

	\bibitem{AKV} S. Avgustinovich, S. Kitaev, and A. Valyuzhenich. Avoidance of boxed mesh patterns on permutations, {\em Discrete Appl. Math.}, {\bf 161} (2013) 43--51.

\bibitem{Baril} 
J.-L. Baril,  Classical sequences revisited with permutations avoiding dotted pattern,  {\em Electron. J. Combin.}, {\bf 18(1)} (2011), \#P178. 

\bibitem{BBL98} P. Bose, J. F. Buss, and A. Lubiw. Pattern matching for permutations. {\em Inform. Process. Lett.} {\bf 65} (1998) 5, 277--283.

\bibitem{BrCl} P. Br\"and\'en and A. Claesson. Mesh patterns and the expansion of permutation statistics as sums of permutation patterns, {\em Electronic J. Combin.}, {\bf 18(2)} (2011), \#P5. 

    \bibitem{CKZ2024} J. N. Chen, S. Kitaev and P. B. Zhang. Distributions of statistics on separable permutations, {\em Discrete Appl. Math.}, {\bf 355} (2024), 169--179.
    
    \bibitem{FFKL} Q. Fang, S. Fu, S. Kitaev and H. Li. On fourteen equidistribution conjectures of Lv and Zhang and monotone mesh patterns with corner shadings, arXiv:2507.04698 (2025).
    
 \bibitem{fu2018two} S.~Fu, Z.~Lin, and J.~Zeng. On two unimodal descent polynomials, {\em Discrete Math.}, 341(9):2616--2626, 2018.

\bibitem{HanZeng} B. Han and J. Zeng. Equidistributions of mesh patterns of length two. {\em S\'{e}m. Lothar. Combin.} {\bf 85B} (2021), Art. 12, 12 pp. 

\bibitem{HanZeng1} B. Han and J. Zeng,  Equidistributions of mesh patterns of length two and Kitaev and Zhang's conjectures. {\em Adv. in Appl. Math.} {\bf 127} (2021), Paper No. 102149, 17 pp.

\bibitem{Hilmarsson2015Wilf}
I.~Hilmarsson, I.~J\'onsd\'ottir, S.~Sigurdard\'ottir, L. Vidarsd\'ottir, and H.~Ulfarsson. Wilf-classification of mesh patterns of short length,
  {\em Electr. J. Combin.}, {\bf 22(4)} (2015),  \#P4.13.
	
\bibitem{Kit} S. Kitaev, \emph{Patterns in permutations and words}, Springer, 2011.

\bibitem{KL} S. Kitaev and J. Liese. Harmonic numbers, Catalan's triangle and mesh patterns, {\em Discrete Math.}, {\bf 313} (2013), 1515--1531.

\bibitem{KZ} S. Kitaev and P. B. Zhang. Distributions of mesh patterns of short lengths, {\em Adv. Appl. Math.}, {\bf 110} (2019) 1--32.

\bibitem{KZZ}  S. Kitaev, P. B. Zhang and X. Zhang. Distributions of several infinite families of mesh patterns, {\em Appl. Math. Comput.}, {\bf 372} (2020) 124984.

\bibitem{LiZhang} D. Li and P. B. Zhang. Distributions of mesh patterns of short lengths on king permutations. {\em Discrete Math.} {\bf 349} (2026), no. 1, Paper No. 114681, 15 pp.

\bibitem{KLv} S. Lv  and S. Kitaev. On (joint) equidistributions of mesh patterns 123 and 132 with symmetric shadings, {\em Adv. Appl. Math.}, {\bf 166} (2025), 102856.

\bibitem{LvZhang} S. Lv and P. B. Zhang. Joint equidistributions of mesh patterns 123 and 132 with minus antipodal shadings.{\em Discrete Appl. Math.} {\bf 379} (2026), 419--433.

\bibitem{LvZhang1} S. Lv and P. B. Zhang. Joint equidistributions of mesh patterns 123 and 321 with symmetric and minus-antipodal shadings, {\em Ann. Comb.} (2026). https://doi.org/10.1007/s00026-025-00793-8.

\bibitem{T2}  B. E. Tenner. Mesh patterns with superfluous mesh, {\em Adv. Appl. Math.}, {\bf 51} (2013), 606--618.
	
\end{thebibliography}
\end{document}